\documentclass[notitlepage,11pt,reqno]{amsart}
\usepackage[foot]{amsaddr}
\usepackage{amssymb,nicefrac,bm,upgreek,mathtools,verbatim}
\usepackage[final]{hyperref}
\usepackage[mathscr]{eucal}
\usepackage{dsfont}
\usepackage[normalem]{ulem}
\usepackage{amsopn,esint}
\usepackage[T1]{fontenc}
\usepackage[shortlabels]{enumitem}
\usepackage[T1]{fontenc}
\usepackage[utf8]{inputenc}
\usepackage{apptools}

\AtAppendix{\counterwithin{theorem}{section}} 

\usepackage[margin=1in]{geometry}

\newcommand{\stkout}[1]{\ifmmode\text{\sout{\ensuremath{#1}}}\else\sout{#1}\fi}
\hypersetup{
  colorlinks=true,
  citecolor=black,
  linkcolor=black,
  urlcolor = blue,
  anchorcolor = blue,
  frenchlinks=false,
  pdfborder={0 0 0},
  naturalnames=false,
  hypertexnames=false,
  breaklinks}
\usepackage{cite}
\usepackage[capitalize]{cleveref}

\crefname{equation}{}{}
\newtheorem{theorem}{Theorem}[section]
\newtheorem{corollary}[theorem]{Corollary}
\newtheorem{proposition}[theorem]{Proposition}
\newtheorem{lemma}[theorem]{Lemma}
\newtheorem*{theorem*}{Theorem}

\theoremstyle{definition}
\newtheorem{definition}[theorem]{Definition}
\newtheorem{remark}[theorem]{Remark}
\newtheorem*{remark*}{Remark}

\numberwithin{equation}{section} 

\newcommand{\eps}{{\varepsilon}}

\newcommand{\cGs}{{\mathbb G}}
\newcommand{\cHs}{{\mathbb H}}

\newcommand{\cFs}{{\mathbb F}}
\newcommand{\cDs}{{\mathbb D}}
\newcommand{\cPs}{{{\mathbb P}_s}}
\newcommand{\loc}{\mathrm{loc}}

\newcommand{\cS}{{\mathcal{S}}}

\newcommand{\RRN}{\mathds{R}^{N}}
\newcommand{\R}{\mathds{R}}
\newcommand{\N}{\mathds{N}}
\newcommand{\RR}{\mathds{R}}
\newcommand{\PP}{\mathbb{P}}

\DeclareMathOperator{\loglap}{L_{\Delta}}
\DeclareMathOperator{\PV}{\text{P.V.}}

\newcommand{\id}{\bm{I}}

\DeclareMathOperator*{\Log}{\text{Log}\,}

\DeclareMathOperator*{\diam}{diam}

\DeclareMathOperator*{\supp}{supp}

\def\YYint#1#2#3{{\setbox0=\hbox{$#1{#2#3}{\iint}$}
		\vcenter{\hbox{$#2#3$}}\kern-.50\wd0}}

\def\XXint#1#2#3{{\setbox0=\hbox{$#1{#2#3}{\int}$}
		\vcenter{\hbox{$#2#3$}}\kern-.50\wd0}}

\usepackage{xcolor}
\allowdisplaybreaks

\newcommand{\ttl}{\texorpdfstring{$s$}{s}\MakeUppercase{-harmonic functions in the small order limit}}
\begin{document}
\title[$s$-harmonic functions in the small order limit]
{\ttl}

\author{Sven Jarohs, Abhrojyoti Sen, and Tobias Weth}
\address{Goethe-Universit\"{a}t Frankfurt, Institut f\"{u}r Mathematik, Robert-Mayer-Str. 10,
D-60629 Frankfurt, Germany. Email: jarohs@math.uni-frankfurt.de, sen@math.uni-frankfurt.de, weth@math.uni-frankfurt.de}

\begin{abstract} 
  We study families $u_s$ of functions satisfying the equations $(-\Delta)^s u_s=0$, $s \in (0,1)$ in a smooth bounded open set $\Omega \subset \RRN$. The main purpose of this paper is twofold. First, we provide a detailed analysis of the asymptotics of these families in the zero order limit $s \to 0^+$. Second, we study the differentiability of $u_s$ as a function of $s$. Most of our results are devoted to the associated Poisson problem, where the family $u_s$ is determined by the exterior condition $u_s = g$ in $\R^N \setminus \Omega$ for some fixed function $g \in L^\infty(\R^N \setminus \Omega)$. Our results show that both the zero order asymptotics and the differentiability properties of $u_s$ can be expressed in terms of the logarithmic Laplacian of suitable extensions of $g$. This allows to deduce pointwise monotonicity properties of $u_s$ in the order parameter $s$ for a large class of functions $g$.
\end{abstract}
\keywords{Fractional boundary value problem, logarithmic Laplacian, Green function estimates, Poisson kernel, fractional logarithmic Laplacian}
\subjclass[2020]{Primary: 35S05, 35C15, 35S15, 35C20, 35B30.}

\maketitle
\tableofcontents
\section{Introduction and main results}


In recent years, nonlocal operators have attracted considerable attention in analysis and partial differential equations. Among these, the fractional Laplacian has emerged as a central object. Let $s\in (0, 1)$ be fixed, then the fractional Laplacian can be defined formally through its integral representation
\begin{align}\label{defn_frac_lap}
(-\Delta)^s u(x) &= c_{N, s}\, \text{P.V.}\int_{\RR^N} \frac{u(x)-u(y)}{|x-y|^{N+2s}} \, dy=c_{N, s} \lim_{\varepsilon \to 0}\int_{\RR^N\setminus B_{\varepsilon}(x)}\frac{u(x)-u(y)}{|x-y|^{N+2s}}\, dy,
\end{align}
where $c_{N, s}$ is a normalization constant given by
\begin{align}\label{defn_c(N, s)}
    c_{N, s}=\frac{4^s\Gamma\left(\frac{N}{2}+s\right)}{\Gamma(2-s)\pi^{\frac{N}{2}}}s(1-s).
\end{align}
The expression \eqref{defn_frac_lap} is in particular well-defined for $u\in C^{2}(\RR^N)\cap L^1_{s}(\RR^N)$, where $L^1_s$ is a weighted $L^1$-space defined by
\begin{align}\label{L1_s space}
L^1_s(\RR^N):=\Bigl\{u \in L^1_{\loc}(\RR^N)\,\, :\,\, \int_{\RR^N}\frac{|u(x)|}{\left(1+|x|\right)^{N+2s}}\, dx<\infty \Bigr\}
\end{align}
endowed with the norm
\begin{align}\label{L1_s space2}
u \mapsto    \|u\|_{L^1_s(\RR^N)}:=\int_{\RR^N}\frac{|u(x)|}{\left(1+|x|\right)^{N+2s}}\, dx.
\end{align}
We emphasize that \eqref{L1_s space}, \eqref{L1_s space2} can be defined for any $s\in \RR$ and we shall also use this space, in particular, for $s=0$.

In the present paper, we are concerned with functions $u \in L^1_{s}(\RR^N)$, which are $s$-harmonic in some bounded domain $\Omega \subset \RRN$ with $C^2$ boundary, i.e., which satisfy
\begin{equation}
  \label{eq:new-s-harmonic}
 (-\Delta)^s u =0 \qquad \text{in}\,\,\, \Omega.  
\end{equation}
Here, equation~(\ref{eq:new-s-harmonic}) is to be understood in distributional sense, i.e.,
\begin{equation}
  \label{eq:new-s-harmonic-distributional}
\int_{\Omega}u (-\Delta)^s\phi\,dx  =0 \qquad \text{for all $\phi \in C^\infty_c(\Omega)$.}
\end{equation}
It is well known  that (\ref{eq:new-s-harmonic-distributional}) implies $u \in C^\infty(\Omega)$  and, in particular, that (\ref{eq:new-s-harmonic}) holds pointwisely in $\Omega$ (see e.g. \cite[Chapter I.6]{L72}, \cite{BB99} or \cite[Theorem 12.17]{Nicola17} and the references in there). While most of the literature treats $s$ as a fixed parameter in $(0, 1)$, it is  natural to ask how the property of being $s$-harmonic depends on the parameter $s$. In particular, it is an interesting question of what happens to (families of) $s$-harmonic function $u_s$ in the limiting regimes $s \to 1^-$ and $s\to 0^+$. Here, the first limit corresponds to the case where the fractional Laplacian converges to the classical Laplace operator, and the \textit{small-order} limit, $s \to 0^+$, corresponds to the case where $(-\Delta)^s$ tends to the identity.

In this paper, we will be concerned with the second limit. We start our discussion with a first observation regarding the flatness of uniformly bounded $s$-harmonic functions $u_s$ in the limit $s \to 0^+$. 

\begin{theorem}\label{thm1.0}
  Let $s_0 \in (0,1)$ and $\Omega \subset \RRN$ be a bounded domain. Also, let $u_s \in L^\infty(\RR^N)$, $s \in (0,s_0]$ be $s$-harmonic in $\Omega$ and such that
  \begin{equation}
    \label{eq:them1.0-assumption}
  \|u_s\|_{L^\infty(\RR^N)} \le C \qquad \text{for all $s \in (0,s_0]$ with a constant $C>0$.}
  \end{equation}
  Then we have that
  $$
  \inf_{c \in \RR}|u_s-c| \to 0 \qquad \text{as $s \to 0^+$ locally uniformly in $\Omega$.}
  $$
\end{theorem}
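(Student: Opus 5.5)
The plan is to represent each $u_s$ on small balls inside $\Omega$ through the explicit Poisson kernel of the fractional Laplacian on a ball. We then show that, as $s\to0^+$, this kernel becomes essentially independent of the interior point, uniformly on the half ball. Recall the classical formula: if $\overline{B_r(x_0)}\subset\Omega$ and $u\in L^\infty(\RR^N)$ is $s$-harmonic in $B_r(x_0)$, then for $x\in B_r(x_0)$
\begin{equation*}
u(x)=\int_{\RR^N\setminus B_r(x_0)}P_{s,r}(x,y)\,u(y)\,dy,
\qquad
P_{s,r}(x,y)=\kappa_{N,s}\Bigl(\frac{r^2-|x-x_0|^2}{|y-x_0|^2-r^2}\Bigr)^{s}\frac{1}{|x-y|^N},
\end{equation*}
where $\kappa_{N,s}=\Gamma(N/2)\pi^{-N/2-1}\sin(\pi s)$. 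Moreover $\int P_{s,r}(x,y)\,dy=1$. For bounded $u$ this formula follows from uniqueness for the exterior problem, since $u$ is smooth in $\Omega$. The key feature is that $\kappa_{N,s}\sim s\,\Gamma(N/2)\pi^{-N/2}$ as $s\to0^+$.

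\emph{Step 1 (oscillation on a half ball).} For $x,x'\in B_{r/2}(x_0)$ we have
$$|u_s(x)-u_s(x')|\le C\int_{|y-x_0|>r}|P_{s,r}(x,y)-P_{s,r}(x',y)|\,dy.$$
Fix $R>2r$ and split the integral into $\{r<|y-x_0|<R\}$ and $\{|y-x_0|>R\}$.

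On the bounded annulus we bound $|P-P'|\le P+P'$. Since $|x-y|\ge r/2$ there, we get
$$\int_{r<|y-x_0|<R}P_{s,r}(x,y)\,dy\le C\kappa_{N,s}\int_r^R \rho^{N-1}(\rho^2-r^2)^{-s}\,d\rho\le C_{R}\,s$$
for $s\le s_0$. The singularity $(\rho-r)^{-s}$ is integrable uniformly in $s\le s_0<1$.

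On the exterior region we write $P_{s,r}(x,y)=P_{s,r}(x',y)\,q(x,x',y)$ with
$$q=\Bigl(\frac{r^2-|x-x_0|^2}{r^2-|x'-x_0|^2}\Bigr)^{s}\Bigl(\frac{|x'-y|}{|x-y|}\Bigr)^N .$$
The first factor lies in $[(3/4)^{s},(4/3)^{s}]$, and the second lies in $[(1-\frac{2r}{R})^N,(1-\frac{2r}{R})^{-N}]$. Hence, using $\int P'\le1$, the exterior contribution is bounded by $\sup|1-q|\le \eta(s)+\eta'(R)$, where $\eta(s)\to0$ as $s\to0$ and $\eta'(R)\to0$ as $R\to\infty$.

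Choosing $R$ large first and then $s$ small gives
$$\operatorname{osc}_{B_{r/2}(x_0)}u_s\to0 \quad\text{as } s\to0^+,$$
uniformly with respect to the family, since only the bound $C$ from \eqref{eq:them1.0-assumption} enters.

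\emph{Step 2 (globalising on compacts).} Let $K\subset\Omega$ be compact. Since $\Omega$ is connected, $K$ is contained in a connected union of finitely many balls $B_{r_i/2}(x_i)$ with $\overline{B_{r_i}(x_i)}\subset\Omega$. Any two of these balls are joined by a chain of pairwise intersecting ones. Therefore $\operatorname{osc}_K u_s\le\sum_i\operatorname{osc}_{B_{r_i/2}(x_i)}u_s\to0$. Taking $c=u_s(x_1)$ gives $\sup_K\inf_{c}|u_s-c|\le \operatorname{osc}_K u_s\to0$, which is the claim.

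\emph{Main obstacle.} The main obstacle is Step 1, namely the uniform smallness of $\int|P_{s,r}(x,\cdot)-P_{s,r}(x',\cdot)|$. It rests on the observation that, as $s\to0^+$, the whole Poisson mass escapes to infinity: the mass on any bounded annulus is $O(s)$. At infinity, the kernels for different interior points become comparable with ratio close to $1$. A secondary point is justifying the Poisson representation for distributional $s$-harmonic $u_s\in L^\infty$. This should follow from the interior smoothness already quoted and the standard uniqueness for the Dirichlet exterior problem on a ball.
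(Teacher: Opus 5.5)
Your proposal is correct and follows essentially the same route as the paper. Lemma~\ref{lem-thm1.0} uses the ball Poisson kernel and splits the integral into the annulus $B_R\setminus B_r$, whose mass is $O(s)$, and the exterior of $B_R$, where the kernels are compared via \eqref{crR estimate}; Corollary~\ref{cor-thm1.0} then takes $R$ large before $s$ small, and a chain of balls finishes the proof. The only differences are cosmetic: you compare two arbitrary points through the multiplicative ratio $q$ and $\int P_{s,r}\,dy=1$ instead of additive kernel differences, and your final quantity should read $\inf_{c}\sup_K|u_s-c|$, which is what your $x$-independent choice $c=u_s(x_1)$ controls.
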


The proof of Theorem~\ref{thm1.0} relies on an explicit local Poisson representation formula for $s$-harmonic functions in balls. More precisely, if $u \in L^1_{s}(\RR^N)$ is $s$-harmonic in $B_r(x)$ for some $x \in \R^N$, $r>0$ and bounded in a neighborhood of $\overline{B_r(x)}$, then we have 
\begin{equation}
  \label{eq:poisson-rep-balls}
u(x) = \int_{\RR^N\setminus B_r(x)}P_{s,r}(x, y) u(y)\, dy \qquad \text{for $x \in B_r(x)$}
\end{equation}
with the associated Poisson kernel
\begin{align*}
    P_{s, r}(x, y)=\frac{\gamma_{N, s}}{|x-y|^N}\left(\frac{r^2-|x|^2}{|y|^2-r^2}\right)^s, \qquad \text{where}\quad \gamma_{N,s}=\frac{\Gamma(\frac{N}{2})}{\pi^{\frac{N}{2}}\Gamma(s)\Gamma(1-s)}.
\end{align*}
Using this representation, the convergence for $s\to 1^-$ of $s$-harmonic functions in $\Omega$ to \textit{classical harmonic} functions has been discussed in \cite[Theorem 5.80]{GH24}.
If we assume in addition that $u$ is nonnegative, we also have the following $s$-independent version of Harnack's inequality:
\begin{equation}\label{harnack s-0}
2^{-4N-2}\leq \frac{u(x_1)}{u(x_2)}\leq 2^{4N+2}\qquad\text{for all $x_1,x_2\in B_{r/4}(x)$.}
\end{equation}
This inequality is well-known to experts and follows easily from (\ref{eq:poisson-rep-balls}). For the convenience of the reader, we include the short proof in the appendix. 

While Theorem~\ref{thm1.0} implies that (uniformly bounded) $s$-harmonic functions become flatter and flatter in $\Omega$ as $s \to 0^+$, it clearly does not imply that $u_s$ converges locally to a fixed constant in $\Omega$. To analyze the shape of $u_s$ for $s$ near zero in greater detail, we now restrict our attention to $s$-harmonic functions $u_s$ satisfying $u_s \equiv g$ in $\R^N \setminus \Omega$ for a {\em fixed} (s-independent) function $g \in L^\infty(\R^N \setminus \Omega)$. Hence we consider the family of nonhomogeneous Dirichlet problems
\begin{equation}\label{main eq}
\begin{cases}
    (-\Delta)^s u =0\,\,\, &\text{in}\,\,\, \Omega\\
    u=g\,\,\, &\text{in}\,\,\, \RRN\setminus\Omega,
\end{cases}
\end{equation}
where $\Omega \subset \RRN$ is a bounded domain with $C^2$ boundary, $s\in (0, 1)$, and $N\geq 2$. For each fixed $s$, this problem has a unique solution $u_s \in L^\infty(\R^N)$, which can again be represented in terms of the associated Possion kernel as
\begin{equation}\label{intro-def-poisson}
    u_s=\PP_sg:=\int_{\RR^N\setminus \Omega}P_s(\cdot, y) g(y)\, dy\, \chi_{\Omega} +g\, \chi_{\RR^N\setminus\Omega},
\end{equation}
see e.\,g. \cite{BKK08}. In the following, we analyze the solution map 
\[
s \mapsto u_s
\]  
and study the behavior of $u_s$ as $s\to 0$ with respect to the exterior data $g$. In particular, our study is motivated by the following questions:\\[-0.2cm] 
\begin{center}
\begin{minipage}{14cm}
 \begin{enumerate}
    \item[{\bf Question 1.}] Under what conditions on the data $g$ does the family $(u_s)_s$ converge in $\Omega$ as $s \to 0^+$?\\[-0.2cm] 
    \item[{\bf Question 2.}] Is the solution map $s \mapsto u_s$ differentiable at $s=0$ under appropriate assumptions on the exterior data $g$? If so, can it be expressed in terms of the logarithmic Laplacian?\\[-0.2cm] 
    \item[{\bf Question 3.}] How does the dependence on $s$ behave for general $s \in (0,1)$, and are there some monotonicity properties with respect to $s$?\\[-0.2cm] 
\end{enumerate}   
\end{minipage}
\end{center}
Our main results answer these questions. For Question 1, we establish conditions on the exterior data $g$ ensuring almost uniform convergence of $u_s$ as $s \to 0^+$. We will see that, in particular, if the spherical averages of $g$ converge to a constant at infinity, then the solutions $u_s$ converge to this constant inside $\Omega$ as $s \to 0^+$. Moreover, we shall identify a first order term in the expansion of $u_s$ as $s \to 0^+$. Before we state the main theorem concerning Question 1 and Question 2, we recall the following definition from \cite{JSW20}.

\begin{definition}[Almost uniform convergence]
Let $\Omega \subset \RRN$ be an open bounded set, and let $u_n : \Omega \to \RR$ be a sequence of functions. We say that $(u_n)_n$ converges \textit{almost uniformly in $\Omega$} to a function $u$ if 
\begin{align*}\label{almost uniform def}
    \lim_{n \to \infty}\left\|\delta^{\varepsilon}_{\Omega}(u_n-u)\right\|_{L^{\infty}(\Omega)}=0\,\,\,\, \text{for every}\,\,\, \varepsilon >0,
\end{align*}
where, here and in the following, we put
$$
\delta_{\Omega}(x)=\text{dist} (x, \partial \Omega) \qquad \text{for $x \in \RRN$,}
$$
so $\delta_{\Omega}: \RR^N \to \RR$ is the boundary distance function.
\end{definition}

We introduce some standard notation. Let $B_r:= B_r(0)$ resp. $S_r:=S_r(0)$ denote the ball and sphere of radius $r>0$ centered at zero and let $\omega_{N-1}$ denote the $(N-1)$-dimensional volume of the unit sphere $S_{N-1}$. Moreover, for a function $g \in L^1(S_r)$, we let 
$$
\fint_{S_r}g(y)\,d\sigma(y):= \frac{1}{r^{N-1}\omega_{N-1}}\int_{S_r}g(y)\,d\sigma(y)
$$
denote the average integral of $g$ over $S_r$.

\begin{theorem}\label{thm1.1}
  Let $N\geq 2$, let $\Omega \subset \RRN$ be an open bounded set with $C^2$-boundary, and let $g \in L^{\infty}(\RR^N\setminus \Omega).$
  Moreover, let
  \begin{equation}
    \label{eq:def-g-tilde}
    \tilde g(r):= \fint_{S_r}g(y)\,d\sigma(y),
  \end{equation}
  which is well-defined for a.\,e. $r>0$ with $S_r \subset \R^N \setminus \Omega$.
  \begin{enumerate}
  \item[(i)] If $c_g := \lim \limits_{r \to +\infty}\tilde g(r)$ exists, then $u_s \to c_g$ almost uniformly in $\Omega$ as $s \to 0^+.$
  \item[(ii)] If $c_g := \lim \limits_{r \to +\infty}\tilde g(r)$ exists and 
    \begin{equation}
      \label{eq:first-order-exp-assumption}
      \int_{1}^\infty \frac{|\tilde g(r)-c_g|}{r}dr < \infty,
    \end{equation}
    then we have
    \begin{equation}
      \label{eq:first-order-exp}
    u_s = c_g + s \Bigl(L_g+ o(1)\Bigr) \qquad \text{in $\Omega$,}
    \end{equation}
    where
    \begin{equation}
      \label{eq:def-L-g}
    L_g(x)= c_{N} P.V.\int_{\RR^N \setminus \Omega}\frac{g(y)-c_g}{|x-y|^N}\,dy \qquad \text{for $x \in \Omega$}
    \end{equation}
 with $c_N=\Gamma(\frac{N}{2})\pi^{-\frac{N}{2}}=\frac{2}{\omega_{N-1}}$ and $o(1) \to 0$ almost uniformly in $\Omega$ as $s \to 0^+$.     
  \end{enumerate} 
\end{theorem}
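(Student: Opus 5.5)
Throughout I work with the Poisson representation \eqref{intro-def-poisson}, $u_s(x)=\int_{\RR^N\setminus\Omega}P_s(x,y)g(y)\,dy$ for $x\in\Omega$. The basic structural fact is that $\int_{\RR^N\setminus\Omega}P_s(x,y)\,dy=1$, because $\PP_s 1=1$ by uniqueness. Hence
\[
u_s-c_g=\int_{\RR^N\setminus\Omega}P_s(x,y)\bigl(g(y)-c_g\bigr)\,dy,
\]
and we may assume $c_g=0$.

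The plan is to compare $P_s$ with the explicit kernel
\[
K_s(x,y)=\gamma_{N,s}\,\frac{\delta_\Omega(x)^s}{\delta_\Omega(y)^s(1+\delta_\Omega(y))^s|x-y|^N}.
\]
The only $s$-dependence that survives as $s\to0^+$ sits in the prefactor $\gamma_{N,s}=\frac{\Gamma(N/2)}{\pi^{N/2}\Gamma(s)\Gamma(1-s)}=s\,c_N(1+O(s))$. Concretely, I would use two-sided Green/Poisson kernel estimates that are uniform in $s\in(0,s_0]$, namely $P_s(x,y)\le C\,s\,\delta_\Omega(x)^{s}\delta_\Omega(y)^{-s}(1+\delta_\Omega(y))^{-s}|x-y|^{-N}$. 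I would obtain these from $P_s(x,y)=c_{N,s}\int_\Omega G_s(x,z)|z-y|^{-N-2s}\,dz$ with $c_{N,s}\sim s\,c_N$, together with uniform-in-$s$ Green function bounds on $C^2$ domains. This is the main obstacle: existing bounds have constants that degenerate as $s\to0$, so I would have to prove uniform bounds. My first attempt would be a comparison with balls via the explicit Poisson kernel $P_{s,r}$ (inner and outer balls at boundary points, using the $C^2$ condition), plus a maximum principle.

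With this kernel estimate I would split $\RR^N\setminus\Omega$ into a near region $\{|y|<R\}$ and a far region $\{|y|\ge R\}$.

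\textbf{Proof of (i).}
1. Near region. For fixed $x$ with $\delta_\Omega(x)\ge\eps$, the kernel bound gives
\[
\int_{\{|y|<R\}\setminus\Omega}P_s(x,y)\,dy\le C s\int_{|y|<R}\delta_\Omega(y)^{-s}|x-y|^{-N}\,dy\le C_{R,\eps}\,s\to0 .
\]
The integrability of $\delta_\Omega(y)^{-s}$ near $\partial\Omega$ is uniform for $s$ small.
2. Far region. Here $P_s(x,y)= s\,c_N|y|^{-N-2s}(1+o(1))$ uniformly for $x$ in compacts, where the $o(1)$ is as $R\to\infty$ and also as $s\to0$. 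In polar coordinates,
\[
\int_{|y|\ge R}P_s(x,y)g(y)\,dy\approx \omega_{N-1}c_N\, s\int_R^\infty r^{-1-2s}\tilde g(r)\,dr,
\]
and $2s\int_R^\infty r^{-1-2s}\,dr=R^{-2s}\to1$. Since $\omega_{N-1}c_N=2$, this is an Abel-type average of $\tilde g$ at infinity, so it tends to $c_g=0$.
3. For the error terms I need uniformity in $x$. The kernel dependence on $x$ is in $|x-y|^{-N}$ versus $|y|^{-N}$; the difference is $O(|y|^{-N-1})$, which contributes $O(s)$. The factor $\delta_\Omega(x)^s$ is handled by the weight $\delta_\Omega^\eps$ together with $\delta_\Omega(x)^s\to1$ uniformly on $\{\delta_\Omega\ge\eps\}$. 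Combined with $\|u_s\|_\infty\le\|g\|_\infty$, this gives almost uniform convergence.

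\textbf{Proof of (ii).} I divide $u_s$ by $s$ and show that
\[
s^{-1}P_s(x,y)\to c_N|x-y|^{-N}
\]
pointwise for $y\notin\overline\Omega$, with the domination $s^{-1}P_s\le C\delta_\Omega(y)^{-s}|x-y|^{-N}$.
1. Near region. Dominated convergence gives convergence to $c_N\int_{|y|<R,\,y\notin\Omega}g(y)|x-y|^{-N}\,dy$.
2. Far region. After replacing $|x-y|^{-N}$ by $|y|^{-N}$ at the cost of an $O(R^{-1})$ error, only the spherical means $\tilde g$ enter. Assumption \eqref{eq:first-order-exp-assumption} gives $\int_R^\infty r^{-1-2s}|\tilde g|\,dr\le\int_R^\infty r^{-1}|\tilde g|\,dr$, which is uniform in $s$ and small for large $R$.
3. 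The same split shows that the principal value in \eqref{eq:def-L-g} exists, understood as $\lim_{R\to\infty}\int_{|y|<R}$. Letting $s\to0$ and then $R\to\infty$ gives $s^{-1}u_s\to L_g$.
4. The pointwise limit $s^{-1}P_s\to c_N|x-y|^{-N}$ itself follows from the Green representation, $G_s\to\delta$-like behaviour, and $c_{N,s}/s\to c_N$. Uniformity on $\{\delta_\Omega\ge\eps\}$, weighted by $\delta_\Omega^\eps$, follows as in step 3 of part (i).
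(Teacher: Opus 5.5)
Your outline has the same architecture as the paper's proof: the Poisson representation with $\int_{\RR^N\setminus\Omega}P_s(x,y)\,dy=1$, a near/far split, a uniform kernel bound near $\partial\Omega$, spherical means at infinity, and $P_s/s\to c_N|x-y|^{-N}$ for (ii). However, it misplaces the difficulty.

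The bound you call the main obstacle is not one. The paper takes $P_s(x,y)\le Cs(1-s)\,\delta_\Omega(x)^s\delta_\Omega(y)^{-s}(1+r_0^{-1}\delta_\Omega(y))^{-s}|x-y|^{-N}$, with $C$ independent of $s$, directly from \cite[Theorem 2.10, Remark 3.8]{Chen99}. What is genuinely missing is a justification of the far-field asymptotics $P_s(x,y)=s\,c_N|y|^{-N-2s}(1+o(1))$, uniformly for $|y|\ge R$ and small $s$, on which your Abel-average argument rests. This cannot come from comparing $P_s$ with $K_s$. Such bounds hold only up to a multiplicative constant and say nothing about how $P_s(x,y)$ depends on the direction of $y$. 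The cancellation through $\tilde g$, however, needs the leading term to be radial in $y$. An angular factor $m(y/|y|)\in[C^{-1},C]$ in $P_s$ would be compatible with your bounds, and a $g$ with $\tilde g\equiv 0$ correlated with $m$ would then not be forced to give $u_s\to 0$. For the same reason, the factor $\delta_\Omega(x)^s$ in your step 3 is the profile of $K_s$, not of $P_s$.

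The missing step is supplied, as in the paper (Theorem~\ref{a limit} and the proof of Theorem~\ref{thm1.1-section}), by the identity $P_s(x,y)=c_{N,s}\int_\Omega G_s(x,z)|z-y|^{-N-2s}\,dz$. Combine it with the elementary bound $\bigl||y|^{N+2s}|z-y|^{-N-2s}-1\bigr|\le C|y|^{-1}$ for $z\in\Omega$, $|y|\ge 2\tilde r_\Omega$, uniformly in $s$. This gives
\[
P_s(x,y)=c_{N,s}\,\mathcal K_s(x)\,|y|^{-N-2s}\bigl(1+O(|y|^{-1})\bigr),\qquad \mathcal K_s(x):=\int_\Omega G_s(x,z)\,dz,
\]
uniformly in $x\in\Omega$ and small $s$. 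The torsion function satisfies $0\le \mathcal K_s\le \int_\Omega F_s(\cdot-z)\,dz\le C$ for $s\in(0,\frac12]$, and $\mathcal K_s\to 1$ almost uniformly \cite[Remark 3.8]{JSW20}; this is the source of your constant $s\,c_N$. With it, (i) is exactly your Abel argument.

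In (ii), the rate $O(|y|^{-1})$, and not merely $o(1)$, is essential, since $\frac{c_{N,s}}{s}\int_{|y|>R}|y|^{-N-2s}\,dy\sim R^{-2s}/s$ is not bounded uniformly in $s$. Two further points in (ii) need repair:
\begin{itemize}
\item Near $\partial\Omega$, the estimate $\|u_s\|_\infty\le\|g\|_\infty$ from step 3 of (i) does not control $u_s/s$. You need the Chen bound together with Lemma~\ref{int:la} to make $\delta_\Omega^\eps |u_s|/s$ uniformly small there.
\item For your dominated-convergence step to yield uniform convergence on $\{\delta_\Omega\ge\beta\}$, you need $P_s(x,y)/s\to c_N|x-y|^{-N}$ uniformly in $x$ on such sets, not just pointwise. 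The paper obtains this in Lemma~\ref{local-Poisson-kernel-convergence} from $G_s=F_s-H_s$ and \cite[Lemmas 4.2, 4.4]{JSW20}, rather than from a loosely stated ``$G_s\to\delta$'' behaviour.
\end{itemize}
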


\begin{remark}
  \label{loglap-def}
  (i)  We clarify that the principle value integral in (\ref{eq:def-L-g}) is concerned with the behavior at infinity, i.e.,
    \begin{equation}
      \label{eq:def-L-g-pv-sense}
    L_g(x)= c_{N} \lim_{r \to \infty}\int_{B_r \setminus \Omega}\frac{g(y)-c_g}{|x-y|^N}\,dy \quad \text{for $x \in \Omega$.}
  \end{equation}
  Under the more restrictive assumption that $g-c_g$ extends to a function in $L^1_0(\RR^N)$, this integral also exists in Lebesgue sense. In contrast, the weaker assumption (\ref{eq:first-order-exp-assumption}) allows large oscillations of $g$ in the spherical variable. For a proof of the fact that
  the limit in (\ref{eq:def-L-g-pv-sense}) exists in the general case of functions $g \in L^\infty(\RR^N \setminus \Omega)$ satisfying (\ref{eq:first-order-exp-assumption}), see Lemma~\ref{regarding-def-L-g-pv-sense} below.

  (ii) The function $L_g$ can be seen as the restriction of $-L_\Delta (g1_{\RR^N \setminus \Omega}-c_g)$ to $\Omega$, where $L_\Delta$ denotes the logarithmic Laplacian. The operator $L_\Delta$ has been introduced in \cite{CW19}, and it appears in various contexts where the zero order limit of fractional problems is considered, see for instance \cite{FJW22,CW19,CHW,AS23,HS22,DLNP21,FS25, CLS24, HSLRS25, CV22, C23}. For the definition and properties of logarithmic Laplacian, see Section~\ref{priliminary} below.

  (iii) The first order expansion~(\ref{eq:first-order-exp}) shows that pointwisely we have
  $$
  L_g = \frac{d}{ds}\Bigl|_{s=0}u_s \qquad \text{in $\Omega$.}
  $$
  In the case where $g \in L^{\infty}(\RR^N\setminus \Omega)$ is nonnegative and $c_g =0$, we  deduce from \eqref{harnack s-0} and \cref{thm1.1}\, (ii) that
  for every ball $B_r(x)\subset \Omega$ we have 
$$
2^{-4N-2}\leq \frac{L_{\Delta}(1_{\RR^N\setminus \Omega}g)(x_1)}{L_{\Delta}(1_{\RR^N\setminus \Omega}g)(x_2)} \le 2^{4N+2} \qquad \text{for all $x_1,x_2 \in B_{r/4}(x)$.}
$$
By approximation, this also holds for nonnegative functions $g \in L^{\infty}_{\loc}(\RR^N\setminus \Omega) \cap L^1_0(\R^N)$.
\end{remark}

As a consequence of Theorem~\ref{thm1.1}, we derive the following limiting Harnack inequality.

\begin{corollary}\label{cor-harnack}
  Let $N\geq 2$, $\Omega \subset \RRN$ be an open bounded set with $C^2$-boundary, let $g \in L^{\infty}(\RR^N\setminus \Omega)$ satisfy $c_g = \lim \limits_{r \to \infty}\tilde g(r)=0$ with $\tilde g$ defined in (\ref{eq:def-g-tilde}), and let $u_s$ denote the solutions of \eqref{main eq} for $s \in (0,1)$. Then we have
\begin{align}\label{harnack}
    \lim_{s\to 0}\frac{u_s(x)}{s}\leq \Bigl(\frac{\diam(\Omega)}{\delta_\Omega(x)}\Bigr)^N \Bigg(\,\lim_{s\to 0}\frac{u_s(y)}{s}+c_N \int_{\RR^N\setminus \Omega}\frac{g^{-}(z)}{|y-z|^N}\,dz\Bigg) \quad \text{for $x,y \in \Omega$.}
\end{align}
\end{corollary}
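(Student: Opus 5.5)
The limits $\lim_{s\to 0} u_s/s$ exist, but Theorem~\ref{thm1.1}(ii) may not give them directly, since \eqref{eq:first-order-exp-assumption} need not hold. If it does hold, the limit at $x$ equals $L_g(x)$ with $c_g=0$. If the limit is understood via a limiting argument, or taken as $\limsup$ on the left and $\liminf$ on the right, the same estimates below apply at the level of $u_s/s$. So it suffices to prove the pointwise inequality
\[
L_g(x)\le \Bigl(\frac{\diam(\Omega)}{\delta_\Omega(x)}\Bigr)^N\Bigl(L_g(y)+c_N\int_{\RR^N\setminus\Omega}\frac{g^-(z)}{|y-z|^N}\,dz\Bigr).
\]
The plan is to split $g=g^+-g^-$. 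Write $L_g=L^+-L^-$, where
\[
L^\pm(x)=c_N\int_{\RR^N\setminus\Omega}\frac{g^\pm(z)}{|x-z|^N}\,dz,
\]
interpreted in the principal value sense of \eqref{eq:def-L-g-pv-sense}. If $L^-(y)=\infty$ the claim is trivial. Otherwise the right-hand side equals $(\diam\Omega/\delta_\Omega(x))^N L^+(y)$. Since $L^-(x)\ge 0$, we have $L_g(x)\le L^+(x)$, so it is enough to show
\[
L^+(x)\le \Bigl(\frac{\diam\Omega}{\delta_\Omega(x)}\Bigr)^N L^+(y).
\]

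This comparison of kernels is the core step. For $z\in\RR^N\setminus\Omega$ and $x,y\in\Omega$ we have $|x-z|\ge\delta_\Omega(x)$ and
\[
|y-z|\le |y-x|+|x-z|\le \diam(\Omega)+|x-z|.
\]
Hence
\[
\frac{|y-z|}{|x-z|}\le 1+\frac{\diam\Omega}{\delta_\Omega(x)}.
\]
This gives the constant $1+\diam/\delta$ rather than $\diam/\delta$. To obtain the sharper constant, I would try the following. Pick $x_0\in\partial\Omega$ closest to $z$ on the segment, or use convexity of the hull, to get $|y-z|\le |x-z|\cdot\diam\Omega/\delta_\Omega(x)$ whenever $|x-z|\ge\delta_\Omega(x)$ and $|x-y|\le\diam\Omega$. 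Alternatively, one may accept that the authors' estimate is essentially this one, possibly with $\delta_\Omega(x)\le\diam\Omega/2$ absorbing the $1$.

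I expect this constant bookkeeping to be the main obstacle. The first thing to try is the case split $|x-z|\ge \diam\Omega$ versus $\delta_\Omega(x)\le |x-z|<\diam\Omega$, looking for a bound for $|y-z|$ in terms of $|x-z|$ in each regime.

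A further issue concerns the principal value: if $g^+$ alone is not integrable at infinity, $L^+$ could be infinite. In that case the inequality is trivially true provided $L^+(y)=\infty$ as well, which follows from the same kernel comparison since $|x-z|/|y-z|\to1$ as $|z|\to\infty$. I would handle this by integrating over $B_r\setminus\Omega$ and letting $r\to\infty$, noting that the kernel comparison holds uniformly in $r$.

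Finally, multiplying the kernel bound by $c_Ng^+(z)\ge0$ and integrating gives the desired comparison of $L^+(x)$ and $L^+(y)$. Combining this with $L_g(x)\le L^+(x)$ and $L^+(y)=L_g(y)+L^-(y)$ yields \eqref{harnack}.
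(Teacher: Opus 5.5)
Your overall strategy matches the paper's: drop $g^-$ at $x$, compare the kernels via $c(x,y)=\bigl(\sup_{z\in\R^N\setminus\Omega}|y-z|/|x-z|\bigr)^N$, and add $g^-$ back at $y$. The gap is the step you flag as the main obstacle. What you actually prove is $c(x,y)\le \bigl(1+\diam(\Omega)/\delta_\Omega(x)\bigr)^N$, which gives a strictly weaker inequality than \eqref{harnack}, and neither of your proposed repairs works.

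\begin{itemize}
\item Using $\delta_\Omega(x)\le\diam(\Omega)/2$ cannot ``absorb the $1$'', because $1+\diam(\Omega)/\delta_\Omega(x)>\diam(\Omega)/\delta_\Omega(x)$ always. The best it yields is $\frac32\,\diam(\Omega)/\delta_\Omega(x)$, which loses a factor $(3/2)^N$.
\item The bound $|y-z|\le |x-z|\,\diam(\Omega)/\delta_\Omega(x)$ does not follow from $|x-z|\ge\delta_\Omega(x)$ and $|x-y|\le\diam(\Omega)$ alone. Take collinear points with $x$ between $y$ and $z$, $|x-y|=\diam(\Omega)$ and $|x-z|=\delta_\Omega(x)$. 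Then $|y-z|=\diam(\Omega)+\delta_\Omega(x)$, which exceeds the claimed bound.
\end{itemize}

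The missing observation is that $|x-y|\le\diam(\Omega)$ is too weak, because it ignores that $B_{\delta_\Omega(x)}(x)\subset\Omega$. Take $y\in\Omega\setminus\{x\}$ (the case $y=x$ is trivial) and small $\eta>0$. The point $x-(\delta_\Omega(x)-\eta)\frac{y-x}{|y-x|}$ lies in $\Omega$, and its distance to $y$ is $|x-y|+\delta_\Omega(x)-\eta\le\diam(\Omega)$. Letting $\eta\to0$ gives $|x-y|\le\diam(\Omega)-\delta_\Omega(x)$. Combining this with $|x-z|\ge\delta_\Omega(x)$ for $z\in\R^N\setminus\Omega$,
\[
\frac{|y-z|}{|x-z|}\le 1+\frac{|x-y|}{|x-z|}\le 1+\frac{\diam(\Omega)-\delta_\Omega(x)}{\delta_\Omega(x)}=\frac{\diam(\Omega)}{\delta_\Omega(x)} .
\]
With this one line your argument closes; the paper itself asserts this bound on $c(x,y)$ without proof.

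A lesser point concerns your remark that, reading the limits as $\limsup$ and $\liminf$, ``the same estimates apply at the level of $u_s/s$''. This is unjustified: your kernel comparison concerns the limiting kernel $c_N|x-z|^{-N}$, not $P_s(\cdot,z)/s$. As the paper implicitly does, you should work in the setting where Theorem~\ref{thm1.1}(ii) applies, so that both limits are identified with $L_g$.
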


We emphasize that if $g\geq 0, $ then \eqref{harnack} reduces to
\begin{align*}
 \lim_{s\to 0}\frac{u_s(x)}{s}\leq \Bigl(\frac{\diam(\Omega)}{\delta_\Omega(x)}\Bigr)^N \lim_{s\to 0}\frac{u_s(y)}{s}\quad\text{for all $x,y\in \Omega$.}
\end{align*}

Next, in view of Question 1 we also show that, if $g$ is as in \cref{thm1.1} but $\lim \limits_{r \to \infty}\tilde g(r)$ does not exist, then $u_s$ may not converge as $s \to 0^+$. The precise statement of our result is the following.

\begin{theorem}\label{thm-limsup-liminf}
  Let $\Omega:= B_1(0) \subset \R^N$, $N \ge 2$ be the unit ball. Then there exists $g \in L^{\infty}(\RR^N\setminus \Omega)$ with the property that the unique solutions $u_s$ of (\ref{main eq}) satisfy
  \begin{equation}
    \label{eq:limsup-liminf}
  \limsup_{s \to 0^+}u_s(x)=1, \quad \liminf_{s \to 0^+}u_s(x)=0 \qquad \qquad \text{for some $x \in \Omega$.}  
  \end{equation}
\end{theorem}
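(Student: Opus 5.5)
Since $\Omega=B_1$, the Poisson kernel of \eqref{main eq} is the explicit ball kernel $P_{s,1}$ from \eqref{eq:poisson-rep-balls}. We take $x=0$ and choose a radial $g(y)=\chi_A(|y|)$ with values in $\{0,1\}$, where $A\subset(1,\infty)$ is a union of very long dyadic-scale shells $A=\bigcup_k [R_{2k},R_{2k+1})$ and $R_j\to\infty$ grows super-exponentially. Then $u_s(0)\in[0,1]$ for all $s$, and passing to polar coordinates gives
\[
u_s(0)=\gamma_{N,s}\,\omega_{N-1}\int_1^\infty \frac{\chi_A(r)}{r\,(r^2-1)^s}\,dr .
\]
After the substitution $t=\log r$, the weight $w_s(t)=\gamma_{N,s}\omega_{N-1}(e^{2t}-1)^{-s}$ is a probability density on $(0,\infty)$ (take $g\equiv1$). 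Since $\gamma_{N,s}\omega_{N-1}=2/(\Gamma(s)\Gamma(1-s))=2\sin(\pi s)/\pi\sim 2s$, this density behaves like $2s\,e^{-2st}$ for large $t$. The mass of $w_s$ near $t\in(0,T]$ with $T$ fixed is $O(s\,T)+O(s\int_0^1(2t)^{-s}dt)\to0$. So $w_s$ is essentially the exponential distribution with rate $2s$, and most of its mass sits at $t$ of order $1/s$.

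The key step is to pick scales so that the exponential law sees only one shell. Set $T_j=\log R_j$ with $T_{j+1}/T_j\to\infty$, for instance $T_j=e^{j^2}$. Take $s_j=1/\sqrt{T_jT_{j+1}}$, the geometric midpoint. Under rate $2s_j$, the mass of $(0,T_j]$ is at most $2s_jT_j=2\sqrt{T_j/T_{j+1}}\to0$, and the mass of $[T_{j+1},\infty)$ is about $e^{-2\sqrt{T_{j+1}/T_j}}\to0$. Hence $u_{s_j}(0)\to\chi_A$ evaluated on $[R_j,R_{j+1})$. This gives $1$ along even $j$ and $0$ along odd $j$. Because $0\le u_s(0)\le1$ always, we get $\limsup=1$ and $\liminf=0$.

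The main obstacle is replacing $w_s$ by the exact exponential density rigorously. Write $(e^{2t}-1)^{-s}=e^{-2st}(1-e^{-2t})^{-s}$ and note that the factor $(1-e^{-2t})^{-s}$ tends to $1$ uniformly for $t\ge1$ as $s\to0$. The piece $t\in(0,1)$ has mass $O(s)$ because $(e^{2t}-1)^{-s}\le (2t)^{-s}$ is integrable uniformly. The normalizing constant is $2\sin(\pi s)/\pi=2s(1+o(1))$. Together these make the tail estimates above legitimate, which proves \eqref{eq:limsup-liminf} at $x=0$. We also check that $g\in L^\infty(\RR^N\setminus\Omega)$ holds trivially. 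Finally, the result does not contradict Theorem~\ref{thm1.1}: here $\tilde g=\chi_A$ has no limit at infinity.
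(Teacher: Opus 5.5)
Your proof is correct, and it uses the same mechanism as the paper. You evaluate at the center through the explicit ball Poisson kernel and take radial data that alternate on shells whose logarithmic radii are super-exponentially separated. The radial weight is then essentially an exponential law in $\log r$ with rate of order $s$, so along suitable $s_j\to 0$ it charges only one shell.

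The execution differs in two ways.

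First, the paper does not use indicator data. It takes $g(x)=|x|^2 f(|x|^2-1)/(|x|^2-1)$, tailored so that after the substitution $t=|y|^2-1$ the weight becomes exactly $s\,t^{-1-s}\,dt$, up to the factor $1/(\Gamma(1+s)\Gamma(1-s))$. This is an exact exponential law of rate $s$ in $\log t$, matching your rate $2s$ in $\log r$. With $f=\sum_k 1_{[r_{2k-1},r_{2k}]}$ this gives the closed form $u_s(0)\propto\sum_j(-1)^{j+1}r_j^{-s}$, with no error terms to control.

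Second, the paper picks $r_k$ and $s_k$ by an inductive diagonal construction. You keep the natural data $g=\chi_A(|y|)$ and pay for it with the (easy) control of the factor $(1-e^{-2t})^{-s}$ and of the piece $t\in(0,1)$. In return you get a fully explicit, non-inductive choice $T_j=e^{j^2}$, $s_j=(T_jT_{j+1})^{-1/2}$, and a transparent probabilistic reading.

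Both arguments are complete. Yours is more explicit; the paper's avoids approximation estimates entirely. As the paper notes, Theorem~\ref{thm1.0} also transfers the conclusion from $x=0$ to every $x\in\Omega$, though the statement does not require this.
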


With the result above, we close the discussion of Question 1 and Question 2. In our next main result, we give an answer to Question 3 by showing that, under rather weak assumptions on $g$, the solution map $(0,1) \to L^\infty(\R^N)$, $s \mapsto u_s$ for the family of problems (\ref{main eq}) is of class $C^1$. Moreover, we derive an explicit formula for its derivative involving the logarithmic Laplacian. The result can be seen as an analogue to \cite[Theorem 1.1]{JSW20}, which is concerned with the corresponding family of $s$-dependent Poisson problems with homogeneous Dirichlet conditions.

To state the result, we first recall the definition of the {\em complementary Poisson kernel} of order $s \in (0,1)$ from \cite{JSW25}, which is given by 
  \begin{equation}
    \label{eq:complementary-poisson-def}
  P_s^{c}: \Omega \times \Omega \to \R, \qquad     P_s^{c}(x,z) = c_N \int_{\R^N\setminus \Omega}\frac{ P_s(z,y)}{|x-y|^{N}}\,dy.
  \end{equation}
We also need the fractional-logarithmic operator, very recently introduced by Chen, Chen, and Hauer in \cite{CH26}, given by 
  \begin{equation}
 \label{fractional-logarithmic-def}   
    [(-\Delta)^{s+\Log}u](x)= c_{N,s}\,\mathcal \PV\int_{\RR^N} \frac{u(x)-u(y)}{|x-y|^{N+2s}}\bigl(-2\ln|x-y|\bigr)\,dy+b_{N,s}(-\Delta)^s u(x)        
    \end{equation}
    with $b_{n,s} = \frac{d}{ds} c_{N,s}$. Note that, for $x \in \R^N$, $[(-\Delta)^{s+\Log}u](x)$ is well-defined for functions $u \in L^1_{s-\eps}(\R^N)$ which are of class $C^{2s+\eps}$ in a neighborhood of $x$ for some $\eps>0$. For functions $u \in C^2_c(\R^N)$, Chen, Chen, and Hauer in \cite{CH26} proved the pointwise identity
    \begin{equation}
      \label{eq:pointwise-identity-fractional-logarithmic}
      (-\Delta)^{s+\Log}u = L_\Delta (-\Delta)^s u = (-\Delta)^s L_\Delta u = \frac{d}{ds}[(-\Delta)^s u]. 
    \end{equation}
    In fact, this identity also holds if $u \in C^\alpha_c(\R^N)$ for some $\alpha>2s$. For the reader's convenience, we supply an elementary proof of this in Appendix~\ref{appD}. Finally, we fix
\begin{equation}
  \label{omega beta}
  \text{$U\subset \R^N$ as an open neighbourhood of $\Omega$, so that $\overline{\Omega}\subset U$.}
\end{equation}

\begin{theorem}\label{thm1.3}
  Let $N\geq 2$ and $\Omega \subset \RRN$ be an open bounded domain with $C^2$-boundary. Let $g \in L^{\infty}(\RR^N) \cap C^{\alpha}(U)$ for some $\alpha\in(0,1]$ where $U$ is given by \eqref{omega beta}, and let $u_s \in L^\infty(\R^N)$ be the unique solution of (\ref{main eq}) for $s \in (0,1)$. Then the map
$$
(0,\frac{\alpha}{2}) \to L^\infty(\Omega), \qquad s \mapsto u_s
$$
is of class $C^1$, and its derivative $\partial_s u_s \in L^{\infty}(\Omega)$ with respect to $s \in (0,\frac{\alpha}{2})$ is given by the unique solution of the fractional Dirichlet problem 
\begin{equation}
  \label{general-deriv-formula}
\left\{  \begin{aligned}
    (-\Delta)^s (\partial_s u_s)&=   \cPs^{\!\! c}\Bigl((-\Delta)^s g\big|_{\Omega}\Bigr)+ L_\Delta \bigl(1_{\Omega} (-\Delta)^s g\bigr) - (-\Delta)^{s+\mathrm{Log}}g  &&\qquad\text{in $\Omega$,}\\ 
    \partial_s u_s & =0&&\qquad\text{in $\RRN\setminus \Omega$.}
  \end{aligned}
\right.
\end{equation}
Moreover, if $(-\Delta)^s g \in L^1_0(\R^N)$ for some $s \in (0,\frac{\alpha}{2})$, then the right hand side of (\ref{general-deriv-formula}) can be written as
\begin{equation}
  \label{general-deriv-formula-special-case}
\cPs^{\!\! c}\Bigl((-\Delta)^s g\big|_{\Omega}\Bigr)- L_\Delta \bigl(1_{\R^N \setminus \Omega} (-\Delta)^s g\bigr)\qquad\text{in $\Omega$.}
\end{equation}
\end{theorem}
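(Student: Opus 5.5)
The plan is to reduce the nonhomogeneous problem to a homogeneous Dirichlet problem with an $s$-dependent right hand side, and then invoke the known differentiability result \cite[Theorem 1.1]{JSW20} for such problems. Put $w_s := u_s - g$. Then $w_s = 0$ in $\RR^N\setminus\Omega$ and
$$
(-\Delta)^s w_s = -f_s \quad \text{in } \Omega, \qquad f_s := (-\Delta)^s g\big|_{\Omega}.
$$
Since $g \in L^\infty(\RR^N)\cap C^\alpha(U)$ and $2s<\alpha$, $f_s$ is well defined and bounded on $\Omega$. The derivative $\partial_s u_s = \partial_s w_s$ then has two sources: the $s$-dependence of the solution operator $G_s$ of the homogeneous Dirichlet problem, and the $s$-dependence of the data $f_s$.

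\textbf{Step 1: differentiate the data.} Using the pointwise identity \eqref{eq:pointwise-identity-fractional-logarithmic}, extended to $g$ bounded and $C^\alpha$ only near $\overline\Omega$ (this is the Appendix~\ref{appD} argument, localized), one shows that
$$
(0,\tfrac{\alpha}{2}) \to L^\infty(\Omega'), \qquad s \mapsto f_s,
$$
is $C^1$ with derivative $(-\Delta)^{s+\Log} g$, for every $\Omega'\Subset U$. This is done by differentiating the kernel $c_{N,s}|x-y|^{-N-2s}$ under the integral, splitting into $|x-y|<\rho$, where the Hölder bound $|g(x)-g(y)|\le C|x-y|^{\alpha}$ absorbs $|x-y|^{-N-2s}|\ln|x-y||$, and $|x-y|\ge\rho$, where $g\in L^\infty$ suffices. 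Uniformity in $x\in\Omega$ holds because $\operatorname{dist}(\Omega,\partial U)>0$.

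\textbf{Step 2: differentiate the solution operator.} Write $w_s=-G_s f_s$. \cite[Theorem 1.1]{JSW20} gives that $s\mapsto G_s h$ is $C^1$ into $L^\infty(\Omega)$ for $h\in L^\infty(\Omega)$ (a version of it for $s$ near zero and $s$-dependent data is needed). Its derivative $v=\partial_s(G_s h)$ solves $(-\Delta)^s v = -L_\Delta$-type terms applied to $G_s h$ extended by zero. Concretely, differentiating $(-\Delta)^s G_s h = h$ formally gives
$$
(-\Delta)^s v = -(-\Delta)^{s+\Log}(G_s h) = -L_\Delta\bigl((-\Delta)^s G_s h\bigr).
$$
The difficulty is that $(-\Delta)^s G_s h$ on all of $\RR^N$ equals $h$ in $\Omega$, plus an exterior part. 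Applying $L_\Delta$ to $1_\Omega h$ gives the term $L_\Delta(1_\Omega(-\Delta)^s g)$ with the right sign. The exterior part, $(-\Delta)^s(G_sh)$ restricted to $\RR^N\setminus\Omega$, is up to sign the Poisson kernel applied to $h$. Its logarithmic interaction with $\Omega$ is $c_N\int_{\RR^N\setminus\Omega}|x-y|^{-N}\int_\Omega P_s(z,y)h(z)\,dz\,dy$, which is exactly $\cPs^{\!\! c} h$ by \eqref{eq:complementary-poisson-def}. Combining this with the chain rule $\partial_s(G_sf_s)=(\partial_sG_s)f_s+G_s(\partial_sf_s)$ and Step 1 yields \eqref{general-deriv-formula}. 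Rigorously, I would run this through the difference quotients $(w_{s+h}-w_s)/h$ and uniform Green function bounds, as in \cite{JSW20}.

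\textbf{Step 3: the special case.} If $(-\Delta)^s g\in L^1_0(\RR^N)$, then $L_\Delta((-\Delta)^s g)$ is defined and equals $(-\Delta)^{s+\Log}g$ by \eqref{eq:pointwise-identity-fractional-logarithmic}. Splitting $1=1_\Omega+1_{\RR^N\setminus\Omega}$ turns \eqref{general-deriv-formula} into \eqref{general-deriv-formula-special-case}.

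I expect the main obstacle to be Step 2, namely rigorously identifying the exterior contribution with $\cPs^{\!\! c}$. This requires interchanging the logarithmic kernel with the Poisson-kernel integral near $\partial\Omega$, where $P_s(z,y)$ blows up like $\delta_\Omega(y)^{-s}$. I would use the Poisson kernel bounds $P_s(z,y)\lesssim \delta_\Omega(z)^s\delta_\Omega(y)^{-s}(1+\delta_\Omega(y))^{-s}|z-y|^{-N}$ together with Fubini to justify the interchange.
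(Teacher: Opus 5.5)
Your architecture matches the paper's: $g-u_s=\mathbb{G}_sf_s$, a product rule, the derivative of the data, and the derivative of the Green operator in the form $\mathbb{G}_s\mathbb{D}_s$ with $\mathbb{D}_s=-L_\Delta(1_\Omega\,\cdot\,)-\mathbb{P}^{c}_s$. Your formal computation in Step~2 lands on the correct $\mathbb{D}_s$. Your Step~1, which differentiates the kernel for all of $g$ at once as in Appendix~\ref{appD}, is a legitimate shortcut compared with the paper's splitting $g=g_1+g_2$.

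However, Step~2 as written is a heuristic, not a proof. Differentiating $(-\Delta)^s\mathbb{G}_sh=h$ in $s$ presupposes the differentiability being proved. Applying $(-\Delta)^{s+\mathrm{Log}}=L_\Delta(-\Delta)^s$ to $\mathbb{G}_sh$ is also unjustified: that function is only $C^s$ across $\partial\Omega$, and its fractional Laplacian behaves like $\delta_\Omega^{-s}$ outside $\Omega$. Moreover, \cite[Theorem~1.1]{JSW20} concerns a fixed H\"older datum, and you would still have to identify its derivative rigorously with $\mathbb{G}_s\mathbb{D}_s$. The paper takes exactly this input from \cite[Theorem~1.2]{JSW25}: $\sigma^{-1}(\mathbb{G}_{s+\sigma}-\mathbb{G}_s)f\to\mathbb{G}_s\mathbb{D}_sf$ in $L^\infty(\Omega)$ for $f\in L^\infty(\Omega)$. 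It handles the $s$-dependence of the datum by the joint continuity $\mathbb{G}_{s_n}f_n\to\mathbb{G}_{s_0}f_0$ of Lemma~\ref{lemma:continuity-smooth}, applied to $\mathbb{G}_{s+\sigma}\bigl(\sigma^{-1}(f_{s+\sigma}-f_s)\bigr)$.

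Two further pieces are missing.

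\textbf{The $C^1$ claim.} Having a derivative at each $s$ does not give continuity of $s\mapsto\partial_su_s=-\mathbb{G}_sh_s$ in $L^\infty(\Omega)$. This is not routine, because $h_s=(-\Delta)^{s+\mathrm{Log}}g-L_\Delta(1_\Omega f_s)-\mathbb{P}^{c}_sf_s$ is unbounded near $\partial\Omega$: the last two terms grow like $1+|\ln\delta_\Omega|$ and $\delta_\Omega^{-s}$. The paper needs three tools here:
\begin{enumerate}
\item the weighted continuity result Lemma~\ref{lemma:continuity-smooth2app}, which cuts off a boundary layer where $\delta_\Omega^{s+\epsilon}|f_s|$ is small and bounds $\mathbb{F}_s\delta_\Omega^{-s-\epsilon}$ uniformly;
\item Lemma~\ref{lemma:continuity-general}, for the $s$-dependence of the kernel $P^c_s$;
\item \cite[Lemma~6.6]{JSW20}, to pass from a continuous one-sided derivative to $C^1$.
\end{enumerate}

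\textbf{Step~3.} You cannot simply cite \eqref{eq:pointwise-identity-fractional-logarithmic}. It is established for compactly supported $C^2$ (or $C^\alpha$) functions, whereas $L_\Delta f_s(x)$ involves $f_s=(-\Delta)^sg$ on all of $\R^N$, including where $g$ is merely bounded. The paper obtains $L_\Delta f_s=(-\Delta)^{s+\mathrm{Log}}g$ in $\Omega$ from two ingredients:
\begin{enumerate}
\item the local semigroup property $(-\Delta)^{s+\sigma}g=(-\Delta)^\sigma(-\Delta)^sg$ in $\Omega$ (Proposition~\ref{semigroup-general}), which uses $f_s\in L^1_0\subset L^1_\sigma$ and an approximation in a weighted $C^2$-space;
\item Lemma~\ref{extension-CW-19}.
\end{enumerate}
Together these yield $\sigma^{-1}(f_{s+\sigma}-f_s)\to L_\Delta f_s$ uniformly on $\Omega$, and comparing with your Step~1 gives the identity. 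So your Step~1 shortcut removes the semigroup argument from the main formula, but it reappears as the missing ingredient for \eqref{general-deriv-formula-special-case}.
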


\begin{remark}
  (i) The assumption $g \in L^{\infty}(\RR^N) \cap C^{\alpha}(U)$ implies that $(-\Delta)^s$ is defined as a distribution on $\R^N$ and in pointwise sense as a $C^{\alpha-2s}$-function on $\Omega$ for $s \in (0,\frac{\alpha}{2})$. These definitions are consistent, see Lemma~\ref{observation-pointwise} below. In particular, $L_\Delta \bigl(1_{\Omega} (-\Delta)^s g\bigr)$ is well-defined as a function in $\Omega$.  Moreover, the extra assumption $(-\Delta)^s g \in L^1_0(\R^N)$ guarantees that also $L_\Delta \bigl(1_{\R^N \setminus \Omega} (-\Delta)^s g\bigr)$ is well-defined as a function in $\Omega$.

(ii) It is remarkable that, while the family of solutions $u_s$ of (\ref{main eq}) and its derivative $\partial_s u_s$ only depend on the values of $g$ in $\R^N \setminus \Omega$, it seems more convenient to express the derivative in terms of an extension of $g$ to all of $\R^N$, as it is required in the formulas (\ref{general-deriv-formula}) and (\ref{general-deriv-formula-special-case}).

\end{remark}

As a corollary of Theorem~\ref{thm1.3} and the positivity of the complementary Poisson kernel $  P_s^{\,c}$ we can derive a sufficient criterion for the pointwise monotonicity of the solution family $u_s$ in $s$.

\begin{corollary}
  \label{cor-thm-1-3}
  Let $N\geq 2$ and let $\Omega \subset \RRN$ be an open bounded domain with $C^2$-boundary. Let $g \in L^{\infty}(\RR^N) \cap C^{\alpha}(U)$ for some $\alpha\in(0,1],$ where $U$ is given by \eqref{omega beta}, and let $u_s \in L^\infty(\R^N)$ be the unique solution of (\ref{main eq}) for $s \in (0,1)$. Moreover, suppose that
  $$
  (-\Delta)^s g \in L^1_0(\R^N) \qquad \text{for all $s \in (0,\frac{\alpha}{2})$.}    
$$
  If $(-\Delta)^sg \geq 0\,\, (\leq 0)$ in $\R^N$ for $s \in (0,\frac{\alpha}{2})$, then the map
$$
(0,\frac{\alpha}{2}) \to \R, \qquad s \mapsto u_s(x)
$$
is increasing (decreasing) for every $x \in \Omega$. 
\end{corollary}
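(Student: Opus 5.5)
By Theorem~\ref{thm1.3}, for fixed $x\in\Omega$ the map $s\mapsto u_s(x)$ is $C^1$ on $(0,\frac{\alpha}{2})$, so it suffices to fix $s$ and show $\partial_s u_s\ge 0$ in $\Omega$ when $(-\Delta)^s g\ge 0$ in $\R^N$. The decreasing case follows by replacing $g$ with $-g$.

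Since $(-\Delta)^s g\in L^1_0(\R^N)$, we may use the form \eqref{general-deriv-formula-special-case} of the right-hand side. Thus $w:=\partial_s u_s$ solves $(-\Delta)^s w=f$ in $\Omega$, $w=0$ in $\R^N\setminus\Omega$, with
$$
f=\cPs^{\!\! c}\bigl((-\Delta)^s g\big|_{\Omega}\bigr)- L_\Delta\bigl(1_{\R^N\setminus\Omega}(-\Delta)^s g\bigr).
$$
I would show that both terms are nonnegative in $\Omega$.

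For the first term, $\cPs^{\!\! c}$ is the integral operator with kernel $P_s^c(x,z)=c_N\int_{\R^N\setminus\Omega}P_s(z,y)|x-y|^{-N}\,dy$. This kernel is nonnegative because $P_s\ge 0$. Hence it maps the nonnegative function $(-\Delta)^s g|_\Omega$ to a nonnegative function. For the second term, $h:=1_{\R^N\setminus\Omega}(-\Delta)^s g$ vanishes in $\Omega$. For $x\in\Omega$ away from the support of $h$, the logarithmic Laplacian reduces to
$$
L_\Delta h(x)=-c_N\int_{\R^N\setminus\Omega}\frac{h(y)}{|x-y|^N}\,dy.
$$
This uses the integral representation from Section~\ref{priliminary}: the singular part and the $\rho_N h(x)$ term vanish since $h(x)=0$. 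The integral converges for a.e.\ $x$ in $\Omega$, since $h\in L^1_0$ and $\mathrm{dist}(x,\R^N\setminus\Omega)>0$. So $-L_\Delta h\ge 0$ in $\Omega$, and therefore $f\ge 0$ in $\Omega$.

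Finally, I would apply the weak maximum principle for the fractional Dirichlet problem. Since $w\in L^\infty$ is the unique solution, it is represented through the nonnegative Green function, $w=\int_\Omega G_s(\cdot,z)f(z)\,dz$, so $w\ge 0$ in $\Omega$. Consequently $s\mapsto u_s(x)$ is nondecreasing on $(0,\frac{\alpha}{2})$ for every $x\in\Omega$. The main obstacle I expect is justifying the Green representation or maximum principle for this $f$, which is only known to be a function in $\Omega$ and may blow up near $\partial\Omega$. I would handle this through the solution concept already implicit in Theorem~\ref{thm1.3}, which asserts the existence and uniqueness of $w$. If strict monotonicity is intended, it would follow from the strong maximum principle whenever $(-\Delta)^s g\not\equiv 0$.
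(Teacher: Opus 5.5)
Your proposal is correct and follows the paper's own proof. The paper likewise invokes \eqref{general-deriv-formula-special-case}, uses $P_s^c\ge 0$ and $-L_\Delta(1_{\R^N\setminus\Omega}f_s)(x)=c_N\int_{\R^N\setminus\Omega}f_s(y)|x-y|^{-N}\,dy\ge 0$ for $x\in\Omega$, and concludes from $\partial_s u_s=\cGs_s(\cdots)$ with the nonnegative Green kernel. The Green representation you were worried about is supplied by the proof of Theorem~\ref{thm1.3}, which gives $\partial_s u_s=-\cGs_s h_s$ directly, so no separate maximum-principle argument is needed.
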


\begin{remark}
\label{example-monotonicity}
At first glance, the assumption on the sign of $(-\Delta)^s g$ may appear nontrivial, but there is a large class of functions $g$ for which $(-\Delta)^s g$ has a fixed sign for all $s \in (0,1)$. Indeed, let $N \ge 3$, let $f \in C_c^\beta(\R^N)$ be nonnegative and consider the Newtonian potential $g$ of $f$ given as the convolution $g=F_1\ast f \in L^\infty(\R^N) \cap C^{2+\beta}(\R^N)$. Here, for $s \in (0,1]$, we let 
$$
F_s:\RR^N\to[0,\infty], \quad F_s(z)=\kappa_{N,s}|z|^{2s-N}.
$$
denote the Riesz kernel, cf. \cref{sec.defis ops}. By the semigroup property of convolution with Riesz kernels, we then have
$$
g = F_s \ast f_s  \qquad \text{for all $s \in (0,1)$ with $f_s := F_{1-s} \ast f$}
$$
and thus $(-\Delta)^s g = f_s \ge 0$ in $\R^N$ for all $s \in (0,1)$. Hence, for this class of functions $g$, Corollary~\ref{cor-thm-1-3} applies and yields that
$$
(0,1) \to \R, \qquad s \mapsto u_s(x)
$$
is increasing for every $x \in \Omega$. We emphazise that this conclusion does not depend on the shape of $\Omega$.
\end{remark}

The paper is organized as follows. In \cref{priliminary}, we recall the necessary definitions of operators and function spaces used throughout the paper. In \cref{sec:flatness-bounded-s}, we prove Theorem \ref{thm1.0}. Theorem \ref{thm1.1} is proved throughout Section \ref{cont at s=0}, and Section \ref{diff at s=0}. Section \ref{sec:counterexample} is devoted to prove Theorem \ref{thm-limsup-liminf}. In Section \ref{diff for any s}, we prove Theorem \ref{thm1.3}. Finally, \cref{short proof of harnack}, \cref{weighted limit}, \cref{weighted limit2}, and \cref{appD} contain, respectively, a short proof of the classical nonlocal Harnack inequality, an asymptotic limit of the Poisson kernel, continuity of the Green operator, and some basic properties of fractional logarithmic Laplacian. 

\section{Preliminaries}\label{priliminary}
In this section, we collect some basic material that is needed for our purposes. We begin with the logarithmic Laplacian and the associated function spaces. Then, we recall some aspects of fractional operators, including fundamental solutions, Green and Poisson operators, and representation formulas.

\subsection{The logarithmic Laplacian and function spaces} The logarithmic Laplacian is a weakly singular Fourier integral operator associated with the symbol $2 \log |\cdot|.$ As mentioned earlier, the operator $L_{\Delta}$ appears in the study of $(-\Delta)^su$ in the limit $s \to 0$. Given a function $u \in C^{\alpha}_c(\RR^N)$ for some $\alpha>0,$ $L_{\Delta}u(x)$ can be uniquely defined by the asymptotic expansion.
\begin{align*}
    (-\Delta)^s u(x)=u(x)+sL_{\Delta}u(x)+o(s),
\end{align*}
where $o(s)$ denotes the small $o$ that satisfies $\frac{o(s)}{s} \to 0$ as $s\to 0.$ So, $L_{\Delta}u$ appears as the linear correction term in the small-order regime of the fractional Laplacian. The operator admits an explicit (weakly singular) integral representation $L_{\Delta}u(x)$, which, for $u\in C^{\alpha}_c(\RR^N)$, $\alpha>0$, and $x \in \RR^N$, is given by
\begin{align}\label{integral rep_log}
    L_{\Delta}u(x)=c_N \text{P.V.}\int_{B_1(x)}\frac{u(x)-u(y)}{|x-y|^N}\, dy-c_N\int_{B^c_1(x)}\frac{u(y)}{|x-y|^{N}}\, dy+\rho_N u(x).
\end{align}
Here, $B_r(x)$ is the ball centered at $x$ with radius $r>0$ and 
\begin{align*}
    c_N=\frac{\Gamma(\frac{N}{2})}{\pi^{\frac{N}{2}}}=\frac{2}{\omega_{N-1}}, \quad \rho_N=2 \log 2+\psi(\frac{N}{2})-\gamma,
\end{align*}
where as in the introduction $\omega_{N-1}$ denotes the $(N-1)$-dimensional volume of the unit sphere in $\RR^N$,  $\gamma=-\Gamma^{\prime}(1)$ is the Euler-Mascheroni constant, and $\psi=\frac{\Gamma^{\prime}}{\Gamma}$ is the Digamma function.

To extend the operator beyond smooth, compactly supported functions, one works with a weighted space $L^1_0(\RR^N)$ as in \eqref{L1_s space} with $s=0$ and the corresponding norm. Then, for a function $u \in L^{1}_0(\RR^N)$, the logarithmic Laplacian of $u$, $L_{\Delta}u$, is well-defined in the distributional sense as 
\begin{align*}
    (L_{\Delta}u)\phi =\int_{\RR^N}u (x) L_{\Delta}\phi(x)\, dx \quad \text{for}\quad \phi \in C^{\infty}_c(\RR^N).
\end{align*}
In the following, we use also the corresponding functional analytic framework for the logarithmic Laplacian. For this we define the space
\begin{align*}
  \mathbb{H}(\RR^N)=\left\{u\in L^2(\RR^N)\,:\, \iint_{\{x, y \in \RR^N\,:\,  |x-y|\leq 1\}}\frac{(u(x)-u(y))^2}{|x-y|^N}\, dx\, dy< \infty  \right\} 
\end{align*}
and, for $\Omega \subset \RR^N$ open and bounded, we consider
\begin{align*}
  \mathbb{H}_0(\Omega)=\left\{u \in \mathbb{H}(\RR^N)\,:\, u\equiv 0 \quad\text{on}\quad \RR^N\setminus \Omega\right\}.  
\end{align*}
The space $\mathbb{H}_0(\Omega)$ corresponds to the Dirichlet problem for $L_{\Delta}$ in $\Omega$ and is a Hilbert space with the inner product
\begin{align*}
    \langle u, v \rangle_{\mathbb{H}_0(\Omega)}=\frac{c_N}{2}\iint_{x, y \in \RR^N,  |x-y|\leq 1}\frac{(u(x)-u(y))(v(x)-v(y))}{|x-y|^N}\,dx\, dy.
\end{align*}
We denote the induced norm by $\|\cdot\|_{\mathbb{H}_0(\Omega)}$. We recall, that $\mathbb{H}_0(\Omega)$ is compactly embedded into $L^2(\Omega)$, where the space $L^2(\Omega)$ is identified as the space of functions in $L^{2}(\RR^N)$ with $u\equiv0$ on $\RR^N \setminus \Omega$, see \cite[Theorem 2.1]{CdP18}.\\

\noindent Furthermore, the bilinear form naturally associated with $L_{\Delta}$ is well-defined on $\mathbb{H}_0(\Omega)$ and given by
\begin{align*}
    \langle \cdot,\cdot \rangle_L&: \mathbb{H}_0(\Omega)\times \mathbb{H}_0(\Omega),\\
    &\qquad \qquad \langle u, v \rangle_L=\langle u,v \rangle-c_N \iint_{x, y \in \RR^N,  |x-y|\geq 1} \frac{u(x)v(y)}{|x-y|^N}\,dx\, dy+\rho_N\int_{\RR^N}u(x)v(x)\, dx.
\end{align*}
We note that the logarithmic Laplacian $L_{\Delta}u(x)$ is also well-defined by \eqref{integral rep_log} if $u\in L^1_0(\RR^N)$ is only Dini continuous at $x$, see \cite[Proposition 2.2]{CW19}. In this case, we also have the domain dependent representation
\begin{equation}\label{alt_integral rep_log}
L_{\Delta}u(x)=c_N \int_{\Omega}\frac{u(x)-u(y)}{|x-y|^N}\, dy-c_N\int_{\RR^N\setminus \Omega}\frac{u(y)}{|x-y|^{N}}\, dy+\left(h_{\Omega}(x)+\rho_N\right) u(x)    
\end{equation}
for $x\in \Omega$, where the function $h_{\Omega}$ is given by
\begin{equation}
  \label{def-h-Omega}
    h_{\Omega}: \Omega \to \RR,\qquad h_{\Omega}(x)=c_N \left(\int_{B_1(x)\setminus \Omega}\frac{dy}{|x-y|^N}-\int_{\Omega\setminus B_1(x)}\frac{dy}{|x-y|^N}\right).
\end{equation}

\subsection{Solution maps and related operators}\label{sec.defis ops}
To prepare for the analysis of boundary value problems, we now recall some basic tools from the theory of $(-\Delta)^s$ similar to \cite{JSW20}. Recall that, for $s \in (0,1)$, the fundamental solution of $(-\Delta)^s$ is given by $F_s:\RR^N\to\RR$, where\footnote{Recall, that $N\geq 2$ throughout our work}
\[
F_s(z)=\kappa_{N,s}|z|^{2s-N},\quad\kappa_{N,s}=\frac{\Gamma(\frac{N}{2}-s)}{4^s\pi^{N/2}\Gamma(s)}=\frac{s \Gamma(\frac{N}{2}-s)}{4^s\pi^{N/2}\Gamma(1+s)}.
\]
We note that 
\begin{equation}
  \label{eq:kappa-N-s-limit}
\frac{\kappa_{N,s}}{s} = \frac{\Gamma(\frac{N}{2}-s)}{4^s\pi^{N/2}\Gamma(1+s)}
\to c_N= \frac{\Gamma(\frac{N}{2})}{\pi^{N/2}}\qquad \text{as $s \to 0^+$.}
\end{equation}
The convolution with the fundamental solution is usually called the {\em Riesz operator} 
$$
f \mapsto \cFs_s f:= F_s * f.
$$
By the Hardy-Littlewood-Sobolev inequality (see \cite{LL01,L83}) this convolution defines a continuous linear map $\cFs_s: L^r(\RR^N) \to L^{p(r,s)}(\RR^N)$ for 
\begin{equation}\label{prs}
r \in (1,\frac{N}{2s}) \qquad \text{and}\qquad p(r,s):= \frac{rN}{N-2sr}.
\end{equation}
Given some exterior data $g$, the following holds for $s$-harmonic functions $u$ in $\Omega$, which equal $g$ outside, see \cite{nicola,BB99,BJK19}.
\begin{lemma}
\label{poisson-representation-new-zero}
Let $g\in L^1_s(\RR^N)$, which is bounded in a relative neighborhood of $\partial \Omega$. Then there exists a unique function $u: \RR^N \to \RR$ which is $s$-harmonic in $\Omega$, bounded in a neighborhood of $\partial \Omega$, and satisfies $u \equiv g$ in $\RR^N \setminus \Omega$.  
\end{lemma}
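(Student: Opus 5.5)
We prove existence and uniqueness separately, with existence coming from an explicit formula. For fixed $s\in(0,1)$ we define $u$ on $\RR^N\setminus \Omega$ by $u:=g$. On $\Omega$ we use the Poisson kernel $P_s(x,y)$ of $\Omega$ and set
\[
u(x):=\int_{\RR^N\setminus\Omega}P_s(x,y)\,g(y)\,dy .
\]
To see that this integral converges absolutely, we use the standard two-sided estimates for $C^{1,1}$ (in particular $C^2$) domains:
\[
P_s(x,y)\asymp \frac{\delta_\Omega(x)^s}{\delta_\Omega(y)^s\,(1+\delta_\Omega(y))^s\,|x-y|^N},
\]
which come from $P_s(x,y)=c_{N,s}\int_\Omega G_s(x,z)|z-y|^{-N-2s}\,dz$ together with Green function bounds. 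The factor $\delta_\Omega(y)^{-s}$ is integrable near $\partial\Omega$. On that region $g$ is bounded, so the near-boundary part of the integral is finite. Away from $\partial\Omega$ the kernel decays like $|y|^{-N-2s}$, so the remaining part is finite because $g\in L^1_s(\RR^N)$.

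Next we show that $u$ is bounded in a neighbourhood of $\partial\Omega$. We split $g=g_1+g_2$, where $g_1=g\,1_{V}$ for a relative neighbourhood $V$ of $\partial\Omega$ on which $g$ is bounded. For the first piece we have $|\PP_s g_1|\le \|g_1\|_\infty \PP_s 1$, and $\PP_s 1=1$ in $\Omega$. The second piece $g_2$ vanishes near $\partial\Omega$, so $\PP_s g_2$ is bounded near $\partial\Omega$: for $y\notin V$ we have $|x-y|\ge c$, hence $P_s(x,y)\lesssim \delta_\Omega(x)^s(1+|y|)^{-N-2s}$.

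It remains to check that $u$ is $s$-harmonic in $\Omega$ in the distributional sense. For $g\in C_c^\infty(\RR^N\setminus\overline\Omega)$ this is the classical fact that $P_s$ is the Poisson kernel. We then pass to general $g$ by truncation and approximation in $L^1_s$, using the locally uniform kernel bounds just described and dominated convergence in $\int_\Omega u\,(-\Delta)^s\phi$. Here one needs $|(-\Delta)^s\phi(x)|\lesssim(1+|x|)^{-N-2s}$, and this holds for $\phi\in C_c^\infty$.

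For uniqueness, let $w$ be the difference of two solutions. Then $w$ is $s$-harmonic in $\Omega$, satisfies $w=0$ outside $\Omega$, and is bounded near $\partial\Omega$. By interior regularity $w\in C^\infty(\Omega)$, and $w$ is bounded on $\Omega$, since it is bounded near $\partial\Omega$ and continuous on compact subsets. Now apply the maximum principle: suppose $\sup w>0$. Since $w$ is bounded and vanishes outside $\Omega$, we can use an approximate maximiser. The cleanest route is a barrier or the weak maximum principle for bounded distributional solutions, obtained by mollifying $w$ in a slightly smaller domain $\Omega_\eps$ and letting $\eps\to0$. At an interior maximum point $x_0$ of a smooth subsolution we have $(-\Delta)^s w(x_0)>0$ unless $w$ is constant, which contradicts $s$-harmonicity. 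Hence $w\le0$, and by symmetry $w\equiv0$.

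The main obstacle is this uniqueness step, because $w$ is not assumed continuous up to $\partial\Omega$. Boundedness alone does not prevent a maximising sequence from escaping to the boundary. I would handle this with the representation \eqref{eq:poisson-rep-balls} on small balls, or more robustly by comparison on $\Omega_\eps=\{\delta_\Omega>\eps\}$. On $\Omega_\eps$ we have $w=\PP_s^{\Omega_\eps}w$, because $w$ is smooth up to $\partial\Omega_\eps$. Letting $\eps\to0$ and using that the harmonic measure of the strip $\Omega\setminus\Omega_\eps$ tends to zero gives $w=\PP_s 0=0$.
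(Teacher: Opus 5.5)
Your argument is correct in outline, but it does not follow the paper, because the paper gives no proof of this lemma: it imports it from the references cited just before it. What you wrote is a self-contained analytic version of the standard exhaustion argument from potential theory. There one writes $u(x)=\mathbb{E}^x u(X_{\tau_{D_n}})$ for an interior exhaustion $D_n\uparrow\Omega$ and passes to the limit, using that the stable process does not leave $\Omega$ continuously through $\partial\Omega$.

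Your existence step rests on two facts:
\begin{itemize}
\item the two-sided Poisson kernel bounds for $C^{1,1}$ domains, which are the same bounds the paper uses in the proof of Theorem~\ref{thm1.1-section};
\item $\int_{\RR^N\setminus\Omega}P_s(x,y)\,dy=1$, which the paper uses in Remark~\ref{lemma:continuity-general-complement-Poisson}.
\end{itemize}
Your uniqueness step rests on the exhaustion by $\Omega_\varepsilon$.

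Your route makes visible where the hypothesis ``bounded near $\partial\Omega$'' enters. It is used only in $|w(x)|\le\|w\|_{L^\infty(\RR^N)}h_\varepsilon(x)$, where $h_\varepsilon$ is the harmonic measure of the strip. Without it uniqueness really fails: $(1-|x|^2)_+^{s-1}$ is $s$-harmonic in $B_1$, vanishes outside, and lies in $L^1_s(\RR^N)$. The price is that everything is tied to $C^{1,1}$ regularity of $\Omega$ and of $\Omega_\varepsilon$. That is harmless here, whereas the cited theory works under weaker boundary regularity.

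To turn the sketch into a proof, drop the paragraph about approximate maximisers and mollification. As you note yourself, it does not close the argument. Then supply the two steps you only assert, and fix the existence step.

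\textbf{The identity on $\Omega_\varepsilon$.} Set $\Omega_\varepsilon:=\{x\in\Omega:\delta_\Omega(x)>\varepsilon\}$; in the paper $\delta_\Omega$ is the distance to $\partial\Omega$ on all of $\RR^N$. For small $\varepsilon$ this is a $C^2$ domain. The identity $w=\mathbb{P}_s^{\Omega_\varepsilon}w$ in $\Omega_\varepsilon$ holds because Poisson integrals on $\Omega_\varepsilon$ of data continuous near $\partial\Omega_\varepsilon$ attain these data continuously. Indeed, by the kernel bounds the factor $\delta_{\Omega_\varepsilon}(x)^s$ kills the mass away from the boundary point, and the total mass is one. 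Hence $w-\mathbb{P}_s^{\Omega_\varepsilon}w$ is continuous on $\RR^N$ and vanishes off $\Omega_\varepsilon$, so the elementary pointwise maximum principle applies.

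\textbf{The vanishing strip measure.} The convergence $h_\varepsilon(x)=\int_{\Omega\setminus\Omega_\varepsilon}P_s^{\Omega_\varepsilon}(x,y)\,dy\to0$ is the real content and needs proof. By Tonelli, $1-h_\varepsilon(x)=\int_{\RR^N\setminus\Omega}P_s^{\Omega_\varepsilon}(x,y)\,dy=\int_{\Omega_\varepsilon}G_s^{\Omega_\varepsilon}(x,z)k_\Omega(z)\,dz$, where $k_\Omega(z)=c_{N,s}\int_{\RR^N\setminus\Omega}|z-y|^{-N-2s}\,dy$. We have $0\le G_s^{\Omega_\varepsilon}(x,\cdot)\le G_s(x,\cdot)$. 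By the first step and the boundary decay of $G_s$, also $G_s(x,\cdot)-G_s^{\Omega_\varepsilon}(x,\cdot)\le\sup_{\Omega\setminus\Omega_\varepsilon}G_s(x,\cdot)=O(\varepsilon^s)$. Dominated convergence then gives the limit $\int_\Omega G_s(x,z)k_\Omega(z)\,dz=\int_{\RR^N\setminus\Omega}P_s(x,y)\,dy=1$.

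\textbf{The existence step.} Plain $L^1_s$-approximation is not enough, since $P_s(x,\cdot)\asymp\delta_\Omega^{-s}$ near $\partial\Omega$. Either approximate in $L^1$ with weight $\min\{\delta_\Omega^{-s},(1+|y|)^{-N-2s}\}$, or avoid approximation altogether. Insert $P_s(x,y)=c_{N,s}\int_\Omega G_s(x,z)|z-y|^{-N-2s}\,dz$ and $\int_\Omega G_s(x,z)(-\Delta)^s\phi(x)\,dx=\phi(z)$. Fubini, justified by that same weighted integrability, then gives $\int_\Omega u\,(-\Delta)^s\phi\,dx=c_{N,s}\int_{\RR^N\setminus\Omega}g(y)\int_\Omega\phi(z)|z-y|^{-N-2s}\,dz\,dy=-\int_{\RR^N\setminus\Omega}g\,(-\Delta)^s\phi\,dx$ directly.
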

For $s \in (0,1)$, the Poisson kernel $P_s: \Omega \times (\RR^N\setminus \overline{\Omega}) \to \RR$ is defined as
\begin{align}\label{Psdef}
P_s(x,z):=c_{N,s}\int_\Omega \frac{G_s(x,y)}{|y-z|^{N+2s}}\, dy\qquad \text{for $x\in \Omega$, $z\in \RR^N\setminus\overline{\Omega}$.}
\end{align}
Here, $G_s: \RR^{2N}_* \to \RR^N$ denotes the Green kernel associated to $(-\Delta)^s$ in $\Omega$, with
$$
\RR^{2N}_* := \RR^N \times \RR^N \setminus \{(x,x)\:: \: x \in \RR^N\}.
$$
The Green kernel has the property that, for every $g\in H^1(\RR^N\setminus\Omega)\cap C^{1,\alpha}(\RR^N\setminus\Omega)$, the function $x \mapsto \int_{\RR^N\setminus\Omega}P_s(x,z)g(z)dz$
is the unique $s$-harmonic extension of $g$ on $\Omega$, which is bounded in a neighborhood of $\partial \Omega$, see e.g. \cite{BKK08}.\\

Next, we provide some preliminary convergence results, as $s \to 0^+$, related to homogeneous and nonhomogeneous fractional Dirichlet problems involving the operator $(-\Delta)^s$. We start by fixing some notation.\\

For $r \in (1,\frac{N}{2s})$ we define the Green operator
\begin{equation}
  \label{eq:G-op-definition}
\cGs_s \::\: L^{r}(\Omega) \to L^{p(r,s)}(\RR^N),\quad [\cGs_s f](x) = \int_{\Omega}G_s(x,y)f(y)\,dy \qquad \text{for $x \in \RR^N$,}
\end{equation}
where $p(r,s)$ is as in \eqref{prs}. We emphasize that $\cGs_s$ can also be defined on $L^r(\Omega)$ for any $r\in[1,\infty]$, but we do not need this in the following.
Analogously, for $s \in (0,1)$, we can define associated to the fractional Poisson kernel $P_s$ in \eqref{Psdef} the Poisson operator, cf. \eqref{intro-def-poisson},
\begin{align*}
\cPs \:&:\: L^{\infty}(\RR^N\setminus \Omega)\cap C^\alpha (\RR^N\setminus \Omega) \to C^\infty(\Omega)\cap L^\infty(\RR^N),\\
&\qquad [\cPs h](x) = \Bigl(\int_{\RR^N\setminus \Omega}P_s(x,y)h(y)\,dy\Bigr)  1_{\Omega}(x) + h(x)1_{\RR^N\setminus \Omega}(x) \qquad \text{for $x \in \RR^N$.}
\end{align*}

Finally, to relate the Poisson kernel, resp. the Poisson operator, with the fundamental solution, resp. the Riesz operator, we define a function $H_s: \RR^{2N}_* \to \RR$, associated with $\Omega$, as follows: For fixed $x\in \Omega$, $H_s(x,\cdot): \RR^N \to \RR$ denotes the unique solution in $L^\infty(\RR^N)$ of
\[
\left\{\begin{aligned}
(-\Delta)^s_yH_s(x,\cdot)&=0&&\text{in $\Omega$,}\\
H_s(x,\cdot)&=F_s(x-\,\cdot)&& \text{on $\RR^N\setminus \Omega$.}
\end{aligned}\right.
\]
Moreover, if $x \in \RR^N \setminus \Omega$, we set 
$H_s(x,\cdot): = F_s(x-\cdot)$ on $\RR^N \setminus \{x\}$.
By the maximum principle, we then have 
\begin{equation*}
  \label{eq:H-F-est}
0 \le H_s(x,y) \le F(x-y) \qquad \text{for $x,y \in \RR^{2N}_*$.}  
\end{equation*}
Consequently, for $r \in (1,\frac{N}{2s})$, we can define an operator 
$$
\cHs_s: L^{r}(\Omega) \to L^{p(r,s)}(\RR^N), \qquad [\cHs_s f](x) = \int_{\Omega}H_s(x,y)f(y)\,dy \qquad \text{for $x \in \RR^N$.}
$$

Recall that the general Poisson kernel representation formula for $s$-harmonic functions in $\Omega$ with prescribed values in $\RR^N \setminus \Omega$, see e.g. \cite[Theorem 1.2.3 and Lemma 3.1.2)]{nicola} (see also \cite{B99,BKK08}) states:

\begin{lemma}
\label{poisson-representation-new}
Under the assumptions of Lemma~\ref{poisson-representation-new-zero}, we have 
$$
u(x)= -\int_{\RR^N\setminus \Omega}g(z) (-\Delta)^s_z G_s(x,z)\,dz=\int_{\RR^N\setminus \Omega} P_s(x,z)g(z)\, dz \qquad \text{for $x \in \Omega$.}
$$
\end{lemma}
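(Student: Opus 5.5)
Since this lemma is quoted from the literature (\cite[Theorem 1.2.3 and Lemma 3.1.2]{nicola}, \cite{B99,BKK08}), the plan is to rederive it from the Green function and a duality argument, then identify the two kernels.

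\emph{Step 1 (smooth data).} Take $g\in C^{1,\alpha}\cap H^1$ near $\partial\Omega$, extend it to a $C^2$ function $\bar g$ on a neighbourhood $U$ of $\overline\Omega$, and keep the tail in $L^1_s(\RR^N)$. Set $w:=u-\bar g$. Then $(-\Delta)^s w=-(-\Delta)^s\bar g=:f$ in $\Omega$, $w=0$ outside $\Omega$, and $f$ is bounded in $\Omega$. Hence $w=\cGs_s f$, that is
\[
u(x)=\bar g(x)-\int_\Omega G_s(x,y)(-\Delta)^s\bar g(y)\,dy .
\]

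\emph{Step 2 (symmetry and duality).} Write $G_s(x,y)=F_s(x-y)-H_s(x,y)$ and use the symmetry of $G_s$. The function $y\mapsto G_s(x,y)$ is $s$-harmonic in $\Omega\setminus\{x\}$, vanishes outside $\Omega$, and satisfies $(-\Delta)^s_yG_s(x,\cdot)=\delta_x$ in $\Omega$. Pair it with $\bar g$ via the distributional identity $\int G\,(-\Delta)^s\bar g=\int \bar g\,(-\Delta)^s G$ over $\RR^N$. This gives
\[
\int_\Omega G_s(x,y)(-\Delta)^s\bar g(y)\,dy=\bar g(x)+\int_{\RR^N\setminus\Omega}\bar g(z)\,(-\Delta)^s_zG_s(x,z)\,dz,
\]
which yields the first formula. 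For $z\notin\overline\Omega$ we have $G_s(x,z)=0$, so the principal value is not needed there and
\[
-(-\Delta)^s_zG_s(x,z)=c_{N,s}\int_\Omega \frac{G_s(x,y)}{|y-z|^{N+2s}}\,dy=P_s(x,z)
\]
by \eqref{Psdef}. This gives the second equality.

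\emph{Step 3 (general $g$).} Approximate $g\in L^1_s$, bounded near $\partial\Omega$, by smooth $g_n$ with $g_n\to g$ in $L^1_s(\RR^N\setminus\Omega)$ and a.e., with uniform bounds near $\partial\Omega$. The Poisson integrals converge by dominated convergence. For $x$ fixed we have $P_s(x,z)\le C_x\,\delta_\Omega(z)^{-s}(1+\delta_\Omega(z))^{-s}|z|^{-N}$ far out, so the kernel is integrable near $\partial\Omega$ and lies in the dual of $L^1_s$ at infinity. The solutions $u_n$ converge locally uniformly in $\Omega$, by the maximum principle applied to differences together with interior estimates. Uniqueness from Lemma~\ref{poisson-representation-new-zero} then identifies the limit with $u$.

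\emph{Main obstacle.} The hardest part is justifying the duality in Step 2: $(-\Delta)^sG_s(x,\cdot)$ has a $\delta$ singularity at $x$ and boundary singularities of order $\delta_\Omega^{-s}$ from outside, while $\bar g$ is only locally smooth with an $L^1_s$ tail. I would handle this by cutting out a small ball $B_\varepsilon(x)$ and a boundary layer. I would then use the Green function bounds $G_s(x,y)\lesssim \min\{|x-y|^{2s-N},\,\delta^s(y)|x-y|^{s-N}\}$, which I would assume from the standard literature, and pass to the limit.
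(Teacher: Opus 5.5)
The paper gives no proof of this lemma; it imports it from \cite{nicola} and \cite{B99,BKK08}, so your argument has to stand on its own. Your structure (Green representation of $u-\bar g$, then pairing $\bar g$ with $G_s(x,\cdot)$) is a genuine alternative. It is an analytic duality, or Green's formula, route, close in spirit to \cite{nicola}. By contrast, \cite{B99,BKK08} obtain $P_s(x,\cdot)$ as the density of the exit distribution of the stable process, and they get $P_s(x,z)=c_{N,s}\int_\Omega G_s(x,y)|y-z|^{-N-2s}\,dy$ from the Ikeda--Watanabe formula. That route needs no boundary estimates. Yours consumes the decay $G_s(x,y)\lesssim\delta_\Omega^s(y)$, which is available for $C^2$ domains. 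In both routes the real content is that nothing is concentrated on $\partial\Omega$, and in your write-up that step is only sketched.

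\textbf{Step 2.} The ``distributional identity'' is not a formality. What you get for free is the following:
\begin{itemize}
\item $(-\Delta)^s_yG_s(x,\cdot)=\delta_x$ on $\Omega$, since $\cGs_s(-\Delta)^s\phi=\phi$ for $\phi\in C^\infty_c(\Omega)$ by uniqueness;
\item $(-\Delta)^s_yG_s(x,\cdot)=-P_s(x,\cdot)$ on $\RR^N\setminus\overline\Omega$, by Fubini.
\end{itemize}
This still leaves room for a distribution supported on $\partial\Omega$. Moreover, combined with Step 1, the identity you invoke for $\bar g\in C^\infty_c(\RR^N)$ is equivalent to the lemma itself.

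Your boundary-layer plan does close this, and no ball around $x$ is needed. Take $\psi\in C^\infty_c(\RR^N)$ and $\eta_\varepsilon=\eta(\delta_\Omega/\varepsilon)$, with $\eta\equiv1$ on $[0,1]$ and $\eta\equiv0$ on $[2,\infty)$. Then $\psi(1-\eta_\varepsilon)$ splits into pieces in $C^\infty_c(\Omega)$ and in $C^\infty_c(\RR^N\setminus\overline\Omega)$, for which the identity holds exactly by the two facts above. For the remaining piece one has $|(-\Delta)^s(\psi\eta_\varepsilon)|\lesssim\min\{\varepsilon^{-2s},\varepsilon\,\delta_\Omega^{-1-2s}\}$ in $\Omega$. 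Combine this with $G_s(x,y)\le C_x\delta_\Omega^s(y)$ for $|y-x|\ge\delta_\Omega(x)/2$ (from the bound you quote) and $G_s\le F_s$ near $x$. This gives $\int_\Omega G_s(x,y)|(-\Delta)^s(\psi\eta_\varepsilon)(y)|\,dy=O(\varepsilon^{1-s})$. Also $\int P_s(x,\cdot)|\psi|\eta_\varepsilon\to0$, because $P_s(x,\cdot)\in L^1$. The defect of the identity for $\psi$ does not depend on $\varepsilon$, so it vanishes. Passing to your $\bar g$ is then routine: split off $\bar g(1-\chi)$ with $\chi\in C^\infty_c(U)$, $\chi\equiv1$ near $\overline\Omega$, which Fubini handles alone, and approximate $\bar g\chi$ in $C^2$.

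\textbf{Step 3.} The claim that the $u_n$ converge locally uniformly ``by the maximum principle applied to differences'' fails as stated. The difference $g_n-g$ tends to $0$ only a.e.\ and in $L^1_s$, not uniformly on $\RR^N\setminus\Omega$, so the maximum principle gives no smallness. You do not need it. Steps 1--2 already give $u_n=\int P_s(\cdot,z)g_n(z)\,dz$. The same domination you use, which is locally uniform in $x$, then yields $u_n\to v:=\int P_s(\cdot,z)g(z)\,dz$ locally uniformly in $\Omega$, with bounds uniform in $n$ up to $\partial\Omega$. These bounds come from two facts:
\begin{itemize}
\item $\int_{\RR^N\setminus\Omega}P_s(x,z)\,dz=1$, obtained by inserting $\psi(\cdot/R)$ in Step 2 and letting $R\to\infty$;
\item $P_s(x,z)\lesssim|z|^{-N-2s}$ uniformly in $x\in\Omega$ for large $|z|$.
\end{itemize}
Hence $v$ is distributionally $s$-harmonic, bounded near $\partial\Omega$ and equal to $g$ outside, and Lemma~\ref{poisson-representation-new-zero} gives $v=u$.

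A minor point: $C^{1,\alpha}$ data need not admit a $C^2$ extension across $\partial\Omega$. Use as approximants the restrictions of mollifications of $g1_{\RR^N\setminus\Omega}$. These are globally smooth and have exactly the a.e., $L^1_s$ and local uniform bounds you need.
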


Hence, as an immediate consequence of Lemma~\ref{poisson-representation-new} we have the following formulas, which are stated in \cite[Lemma 3.2.vi)]{nicola}: In $\Omega$, it holds
$$
\int_{\RR^N\setminus \Omega}F_s(x-z) P_s(\cdot,z)\ dz
=-\int_{\RR^N\setminus \Omega}F_s(x-z) (-\Delta)^s_z G_s(\cdot ,z)\ dz=\left\{\begin{aligned} &F_s(x- \,\cdot\,) && \text{for $x \in \RR^N \setminus \overline{\Omega}$;}\\
&H_s(x,\cdot) &&\text{for $x \in \Omega$.}
\end{aligned}\right.
$$
Using the fact that $H_s(x,y)= F_s(x-y)$ if $x \in \RR^N \setminus \Omega$ or $y \in \RR^N \setminus \Omega$ and that these identities are then continuous in $x\in \RR^N\setminus\{y\}$ for fixed $y\in \Omega$, we have the compact form 
\begin{equation*}
  \label{eq:splitting-prelim-2}
  \begin{split}
H_s(x,y)&=-\int_{\RR^N\setminus \Omega}F_s(x-z)(-\Delta)^s_z G_s(y,z)\ dz\\
&=\int_{\RR^N\setminus \Omega}F_s(x-z)P_s(y,z)\ dz\qquad \text{for $x \in \RR^N$, $y \in \Omega$.}
\end{split}
\end{equation*}

Next, we recall some auxiliary lemmas from \cite{JSW20}. Consider the Dirichlet problem with zero exterior data, that is,
\begin{align*}
 \begin{cases}
    (-\Delta)^s u =f\,\,\, &\text{in}\,\,\, \Omega\\
    u=0\,\,\, &\text{in}\,\,\, \RRN\setminus\Omega,
\end{cases}   
\end{align*}
We note that, for sufficiently smooth $f$, one can formally transfer information between this formulation and the one considered in this paper, that is, the zero right-hand side with non-zero exterior data $g$, provided that $g$ possesses suitable regularities. Assuming the required regularities on $f$ (possibly up to the boundary of $\Omega$),  we have the following results.
 \begin{lemma}[Lemma 4.2, \cite{JSW20}]\label{Lem: lemma 4.1}We have the following estimates for $s \in (0, \frac{1}{2}].$\hspace{1em}
  \begin{enumerate}
  \item[(i)] If $f\in C^{\alpha}_c(\RR^N)$ for some $\alpha>0$, then  
    \begin{equation*}
      \label{eq:rn-case-first-claim}
\Big|\Bigl[\frac{\cFs_s-\textnormal{id}}{s}f\Bigr] (x)+[L_{\Delta} f](x)\Big|\leq s\ C \|f\|_{C^{\alpha}(\RR^N)}\quad\text{ for $x\in \RR^N,$}
    \end{equation*}
with some constant $C= C(N,\alpha,\supp f)>0$.
\item[(ii)] If $f\in C^{\alpha}(\overline{\Omega})$ for some $\alpha>0$, then 
  \begin{equation}
    \label{eq:rn-case-second-claim}
\Big|\Bigl[\frac{\cFs_s-\textnormal{id}}{s}E_{\Omega} f\Bigr] (x)+[L_{\Delta} E_{\Omega} f](x)\Big|\leq \frac{s\ C}{\varepsilon} \delta_{\Omega}(x)^{-\varepsilon}\|f\|_{C^{\alpha}(\overline{\Omega})}
\end{equation}
for $x\in \Omega$, $\varepsilon \in (0,1)$, with some constant $C=C(N,\Omega,\alpha)>0$.
  \end{enumerate}
\end{lemma}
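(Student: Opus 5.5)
The plan is to argue directly from the kernel representations rather than via Fourier symbols, since part (ii) concerns a discontinuous function. Fix $x$ and write $\cFs_s f(x)=\kappa_{N,s}\int_{\RR^N}|z|^{2s-N}f(x-z)\,dz$. Splitting at $|z|=1$ gives
\begin{equation*}
\cFs_s f(x)=\kappa_{N,s}\int_{B_1}\frac{f(x-z)-f(x)}{|z|^{N-2s}}\,dz+\kappa_{N,s}\frac{\omega_{N-1}}{2s}f(x)+\kappa_{N,s}\int_{\RR^N\setminus B_1}\frac{f(x-z)}{|z|^{N-2s}}\,dz .
\end{equation*}
Each piece is compared with the corresponding term of \eqref{integral rep_log}.

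\textbf{Step 1: the constants.} For the middle term, $\frac{\kappa_{N,s}\omega_{N-1}}{2s}=\frac{\Gamma(\frac N2-s)}{4^s\Gamma(1+s)\Gamma(\frac N2)}$ is smooth in $s\in[0,\frac12]$. It equals $1$ at $s=0$, and its derivative there is $-\psi(\frac N2)-2\log 2+\gamma=-\rho_N$. Hence subtracting $f(x)$ and dividing by $s$ gives $-\rho_N f(x)+O(s)\|f\|_\infty$. Similarly, \eqref{eq:kappa-N-s-limit} together with smoothness in $s$ gives $\kappa_{N,s}/s=c_N+O(s)$. This lets me replace $\kappa_{N,s}/s$ by $c_N$ in the other two terms at the cost of $O(s)$ times the (bounded) integrals.

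\textbf{Step 2: kernel expansion.} The key elementary inequality is
\begin{equation*}
\Big|\frac{t^{2s}-1}{s}-2\log t\Big|\le C s\,|\log t|^2\,t^{2s}\qquad\text{for } t>0 .
\end{equation*}
For (i), the near term satisfies $|f(x-z)-f(x)|\le \|f\|_{C^\alpha}|z|^\alpha$. Against $|z|^{-N}|\log|z||^2$ this is integrable on $B_1$, so the near term equals $-c_N\,\PV\int_{B_1(x)}\frac{f(x)-f(y)}{|x-y|^N}dy+O(s)$.

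For the far term I distinguish two cases. If $\operatorname{dist}(x,\supp f)\le R_0$, then $|z|$ ranges over a bounded annulus and the error is $O(s)$. If $x$ is far from the support, then $f(x)=0$ and $|z|\sim|x|$. The error is then bounded by $C s\,|x|^{-N+2s}\log^2|x|\,\|f\|_{L^1}$, which is bounded since $2s\le 1<N$. Summing the three contributions yields exactly $-L_\Delta f(x)+O(s)$, which proves (i).

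\textbf{Step 3: part (ii).} For $x\in\Omega$, $E_\Omega f$ is $C^\alpha$ only on $B_{\delta_\Omega(x)}(x)$. I therefore split at $r=\min\{\delta_\Omega(x),1\}$ instead of at $1$ where needed. On $B_r$ the argument of Step 2 applies verbatim. On the annulus $r<|z|<1$ I use only $|E_\Omega f|\le\|f\|_\infty$ together with the kernel inequality. This produces an error of order $s\int_r^1|\log t|^2\,\frac{dt}{t}\lesssim s\,|\log r|^3$, and this logarithmic loss is converted into $\frac{C}{\varepsilon}\delta_\Omega(x)^{-\varepsilon}$. The far region is handled as in (i), using that $\Omega$ is bounded.

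The main obstacle is obtaining precisely the prefactor $\frac1\varepsilon$ in \eqref{eq:rn-case-second-claim}. A crude bound $|\log r|^k\le C_k\varepsilon^{-k}r^{-\varepsilon}$ gives worse powers of $\varepsilon^{-1}$. To avoid this, I would keep the annulus contribution in the form $s\int_r^1|\log t|\,t^{-1}\,\big|(t^{2s}-1)/s\big|\,dt$. I would also exploit that the first-order term $c_N\int_{B_1\setminus B_r}(E_\Omega f(x-z)-f(x))|z|^{-N}dz$ is matched exactly by the domain-dependent representation \eqref{alt_integral rep_log}, including $h_\Omega$. This leaves only a single power of the logarithm, and $|\log r|\le \varepsilon^{-1}r^{-\varepsilon}$ then gives the claimed $\frac{C}{\varepsilon}\delta_\Omega(x)^{-\varepsilon}$.
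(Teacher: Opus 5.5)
The paper does not prove this lemma; it quotes it from \cite{JSW20}. Judged on its own terms, your part (i) is correct, but the last step of (ii) fails, and (ii) as stated cannot be proved.

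\textbf{The gap in (ii).} After you match the first-order terms with $h_\Omega$ via \eqref{alt_integral rep_log}, the remainder is genuinely of size $s\log^2\delta_\Omega(x)$, not $s|\log\delta_\Omega(x)|$. So there is no ``single power of the logarithm'' left to convert.

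Take $f\equiv 1$, so $E_\Omega f=1_\Omega$, and let $x\in\Omega$ with $\delta:=\delta_\Omega(x)<1$. Your own splitting at $|z|=1$, using $\frac{\kappa_{N,s}}{s}=c_N+O(s)$ and the constant-term identity, gives
\begin{equation*}
\Bigl[\frac{\cFs_s-\mathrm{id}}{s}1_\Omega\Bigr](x)+[L_{\Delta}1_\Omega](x)=c_N\int_{B_1(x)\setminus\Omega}\frac{1-|x-y|^{2s}}{|x-y|^{N}}\,dy+O\bigl(s(1+|\log\delta|)\bigr).
\end{equation*}
Two facts give a lower bound for the integral:
\begin{itemize}
\item For a $C^2$ domain, the exterior ball condition shows that a fixed fraction $c_0=c_0(N)>0$ of each sphere $\partial B_t(x)$ with $4\delta\le t\le t_0(\Omega)$ lies outside $\Omega$.
\item For $\delta\le t<1$ and $s|\log\delta|\le 1$, we have $1-t^{2s}\ge 2s|\log t|\,t^{2s}\ge 2e^{-2}s|\log t|$.
\end{itemize}
Hence the integral is at least $c\,s\log^2\delta-Cs$. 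At $s=|\log\delta|^{-1}$ the left-hand side is therefore at least $c|\log\delta|-C$.

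On the other hand, \eqref{eq:rn-case-second-claim} with $\varepsilon=|\log\delta|^{-1}$ bounds the same quantity by $Ce$. Letting $\delta\to0$ gives a contradiction.

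Equivalently, $\inf_{\varepsilon\in(0,1)}\varepsilon^{-1}\delta^{-\varepsilon}=e|\log\delta|$ for $\delta<e^{-1}$. So \eqref{eq:rn-case-second-claim} with an $\varepsilon$-independent $C$ is the same as an $O(s|\log\delta|)$ bound, which is false. Part (ii) in the stated form is not provable by any method.

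\textbf{What your splitting does prove.} Use H\"older continuity on $B_{r}(x)$, only $\|f\|_{L^\infty}$ on $B_1(x)\setminus B_r(x)$, and $\bigl|\frac{\kappa_{N,s}}{s}t^{2s}-c_N\bigr|\le Cs(1+|\log t|)$ for $t<1$. This gives
\begin{equation*}
\Bigl|\Bigl[\frac{\cFs_s-\mathrm{id}}{s}E_\Omega f\Bigr](x)+[L_{\Delta}E_\Omega f](x)\Bigr|\le Cs\bigl(1+|\log\delta_\Omega(x)|\bigr)^2\|f\|_{C^{\alpha}(\overline\Omega)}\le\frac{C's}{\varepsilon^{2}}\,\delta_\Omega(x)^{-\varepsilon}\|f\|_{C^{\alpha}(\overline\Omega)},
\end{equation*}
and the example above shows this is sharp. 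The paper only applies \eqref{eq:rn-case-second-claim} with $\varepsilon$ fixed, in Lemmas~\ref{local-Poisson-kernel-convergence} and~\ref{remainder-integral-uniform-est}. This corrected form is therefore all that is needed, and you should state it instead of trying to rescue the factor $\frac1\varepsilon$.

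\textbf{Small fixes in your part (i) and Step~3.}
\begin{itemize}
\item Your ``key elementary inequality'' fails as $t\to0^+$: the left side tends to $+\infty$ and the right side to $0$. It needs $\max\{1,t^{2s}\}$ in place of $t^{2s}$.
\item For the kernel you only need the first-order bound $|t^{2s}-1|\le 2s|\log t|\max\{1,t^{2s}\}$, because there is no factor $\frac1s$ in front of $|z|^{2s}-1$.
\item Consequently your Step~3 annulus error is $O(s|\log r|^2)$, not $O(s|\log r|^3)$.
\item The second-order bound matters only for the constant term, and only if you split at $r<1$.
\end{itemize}
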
 

\begin{lemma}[Lemma 4.4, \cite{JSW20}]\label{Lem: lemma 4.2}
Let $f\in L^\infty(\Omega)$. Then 
\[
\limsup_{s \to 0^+} \frac{1}{s^2}\|\delta_{\Omega}^\varepsilon \cHs_s f\|_{L^\infty(\Omega)} \le C \|f\|_{L^\infty(\Omega)}\qquad \text{ for every $\varepsilon>0$}
\]
with a constant $C=C(N,\Omega,\varepsilon)$.
\end{lemma}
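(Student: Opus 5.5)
The plan is to reduce the claim to a pointwise kernel bound. Since $|[\cHs_s f](x)| \le \|f\|_{L^\infty(\Omega)}\,[\cHs_s 1](x)$ by $H_s\ge 0$, it suffices to show
\[
\limsup_{s\to 0^+}\frac{1}{s^2}\sup_{x\in\Omega}\delta_\Omega(x)^{\varepsilon}\int_\Omega H_s(x,y)\,dy \le C(N,\Omega,\varepsilon).
\]
I would use the representation $H_s(x,y)=\int_{\RR^N\setminus\Omega}F_s(x-z)P_s(y,z)\,dz$ for $x\in\Omega$, $y\in\Omega$. This exhibits the two factors of $s$. The first comes from $F_s(x-z)=\kappa_{N,s}|x-z|^{2s-N}$, where $\kappa_{N,s}\le Cs$ by \eqref{eq:kappa-N-s-limit}. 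The second should come from the Poisson kernel, which carries the prefactor $\gamma_{N,s}=\frac{\Gamma(N/2)}{\pi^{N/2}\Gamma(s)\Gamma(1-s)}\sim c_N s$. Heuristically, $H_s(x,\cdot)/s$ is the Poisson extension of the datum $c_N|x-\cdot|^{-N}$, whose spherical averages tend to $0$ at infinity. Theorem~\ref{thm1.1}-type reasoning then says that this extension is $O(s)$.

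The key step is an upper bound for $P_s$ with constants independent of $s\in(0,s_0]$. I aim for an estimate of the form
\[
P_s(y,z)\le C\,s\,\frac{\delta_\Omega(y)^s}{\delta_\Omega(z)^s\,(1+\delta_\Omega(z))^{s}\,|y-z|^N}\qquad y\in\Omega,\ z\in\RR^N\setminus\overline\Omega .
\]
I would first try to get this from \eqref{Psdef} together with the Green function bound $G_s(y,w)\le C s\,\min\{|y-w|^{2s-N},\,\delta_\Omega(y)^s\delta_\Omega(w)^s|y-w|^{-N}\}$ and $c_{N,s}\le Cs$. The Green bound itself I would derive by comparison with the explicit ball Green function and Poisson kernel $P_{s,r}$ (interior and exterior balls, using that $\Omega$ is $C^2$), tracking constants as $s\to0^+$. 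The remaining estimate is the integral $\int_\Omega \delta(w)^s|y-w|^{-N}|w-z|^{-N-2s}dw$ near the boundary. In a flattened boundary chart it reduces to one-dimensional integrals like $\int_0^1 t^s(t+d)^{-1-2s}dt\lesssim d^{-s}/s$, and the $1/s$ loss is compensated by the extra factor $s$ from $c_{N,s}$.

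With the kernel bound in hand, the rest is an integration. Integrating first in $y$ gives $\int_\Omega \delta(y)^s|y-z|^{-N}dy\le C(1+|\log\delta_\Omega(z)|)$, uniformly for small $s$, since $\Omega$ is bounded. Hence
\[
\int_\Omega H_s(x,y)\,dy\le C s^2\int_{\RR^N\setminus\Omega}|x-z|^{2s-N}\,\frac{1+|\log\delta_\Omega(z)|}{\delta_\Omega(z)^{s}(1+\delta_\Omega(z))^{s}}\,dz .
\]
The tail $|z|\to\infty$ behaves like $\int^\infty r^{-1-2s}\log r\,dr$, which is harmful. I would avoid it by splitting $\RR^N\setminus\Omega$ into a bounded neighbourhood and its complement. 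On the complement I would not use the kernel bound: instead I use $\int P_s(y,z)dz=1$ and the decay $F_s(x-z)\le Cs|z|^{2s-N}$, together with the fact that $P_s(y,\cdot)$ gives mass only $O(s)$ to $\{|z|>R\}$. This follows from comparison with the ball kernel $P_{s,r}$, whose exterior mass beyond $R$ is $\gamma_{N,s}\int_R^\infty \rho^{-1-2s}(\dots)\,d\rho=O(1)\cdot s\cdot s^{-1}$. I would therefore rather use the bound $F_s(x-z)\le C s R^{-N+2s}$ there, giving $O(s)\cdot O(s)$ after a further splitting at $|z|\sim e^{1/s}$. On the bounded part, the only singularity is at $z$ near $\partial\Omega$ close to $x$. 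There the integral is at most $C(1+|\log\delta_\Omega(x)|)^2\le C_\varepsilon\delta_\Omega(x)^{-\varepsilon}$.

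The main obstacle is the $s$-uniform Poisson kernel estimate near $\partial\Omega$, and in particular that the boundary behaviour costs only factors $\delta^{\pm s}$ and logarithms, not negative powers of $s$. The large-$|z|$ mass bound is a secondary difficulty. If the global boundary estimate proves too delicate, I would fall back on a local version: use the explicit ball formula \eqref{eq:poisson-rep-balls} on the balls $B_{\delta_\Omega(y)/2}(y)$ together with the maximum principle. This suffices because only the weighted norm with $\delta_\Omega^{\varepsilon}$ is required.
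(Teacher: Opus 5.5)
Your core strategy is correct and is essentially the route of \cite{JSW20}, which this paper only cites. You reduce to $\cHs_s 1$ and write $H_s(x,y)=\int_{\RR^N\setminus\Omega}F_s(x-z)P_s(y,z)\,dz$. One factor $s$ comes from $\kappa_{N,s}$, the other from an $s$-uniform Poisson kernel bound, and the integral estimates of Lemma~\ref{int:la} finish the proof. The kernel bound you call the main obstacle need not be rebuilt from Green function comparisons. It is exactly \cite[Theorem 2.10]{Chen99}, which the paper already uses in the proof of Theorem~\ref{thm1.1-section}.

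The step that fails as written is your treatment of large $|z|$.

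\textbf{The tail divergence is self-inflicted.} It comes from the estimate $\int_\Omega\delta_\Omega(y)^s|y-z|^{-N}\,dy\le C(1+|\log\delta_\Omega(z)|)$, which discards all decay in $z$. The first estimate of Lemma~\ref{int:la}, with $a=-s$ and $\lambda=0$, gives instead the bound $C(1+|\log\delta_\Omega(z)|)/(1+\delta_\Omega(z)^N)$. For $\delta_\Omega(z)\ge 1$ this also follows from $|y-z|\ge\delta_\Omega(z)$. With this decay, the integrand $|x-z|^{2s-N}\delta_\Omega(z)^{-s}(1+\delta_\Omega(z))^{-s}(\cdots)$ is $O(|z|^{-2N})$ at infinity, uniformly in $s$. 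The tail therefore contributes $O(s^2)$ directly.

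\textbf{The patch you propose rests on a false claim.} $P_s(y,\cdot)$ does not give mass $O(s)$ to $\{|z|>R\}$. We have $\int_{\RR^N\setminus\Omega}P_s(y,z)\,dz=1$, while the kernel bound gives $\int_{B_R\setminus\Omega}P_s(y,z)\,dz=O(s)$. Hence the mass outside $B_R$ tends to $1$ as $s\to0^+$. Your own computation, $O(1)\cdot s\cdot s^{-1}$, says the same; compare also Theorem~\ref{a limit}.

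Consequently, combining $F_s(x-z)\le CsR^{2s-N}$ with this mass yields only $O(s)$. Splitting at $|z|\sim e^{1/s}$ does not help unless you also use the pointwise decay $P_s(y,z)\le Cs\,|z|^{-N-2s}$, which is again just the kernel bound.

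Replace that paragraph by the decay estimate above. The near-boundary part can be handled with Lemma~\ref{int:la}(ii), absorbing the logarithm into $\delta_\Omega(z)^{-s-\eta}$ with $\eta<\varepsilon$. The argument then gives $\delta_\Omega(x)^{\varepsilon}\int_\Omega H_s(x,y)\,dy\le C_\varepsilon s^2$ for small $s$, as required. The local-ball fallback is then unnecessary.
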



\subsection{A further preliminary estimate}
\label{sec:furth-prel-results}

In the following, we note the simple estimate
$$
|a^{-\alpha}-b^{-\alpha}|\le \alpha |a-b|(a^{-\alpha-1} + b^{-\alpha-1}) \qquad \text{for $a,b,\alpha>0$,}
$$
which implies that
\begin{equation}
  \label{alpha-est}
\Bigl||y|^{-\alpha}-|z|^{-\alpha} \Bigr|\le \alpha |y-z|\bigl(|y|^{-\alpha-1}-|z|^{-\alpha-1}\bigr) \qquad \text{for $y,z \in \R^N \setminus \{0\}$.}  
\end{equation}
As a consequence, for $0<r<R< \infty$ and $z \in B_{r}(0), y \in \RR^N \setminus B_R(0)$ we have 
\begin{align}\label{crR estimate}
\Bigl||y|^{-N}- |z-y|^{-N}\Bigr| &\le N|z| \Bigl(|y|^{-N-1}+|z-y|^{-N-1}\Bigr) \le  \frac{Nr}{R}|y| \Bigl(|y|^{-N-1}+(|y|-|z|)^{-N-1}\Bigr)\nonumber\\
                                   & \le \frac{Nr}{R}|y| \Bigl(|y|^{-N-1}+\bigl(|y|-\frac{r}{R}|y|\bigr)^{-N-1}\Bigr)
 = C_{r,R}|y|^{-N}
\end{align}
with $C_{r,R}:= \frac{Nr}{R}\Bigl( 1+ (1-\frac{r}{R})^{-N-1}\Bigr)$.

\section{Flatness of bounded \texorpdfstring{$s$}{s}-harmonic functions for small \texorpdfstring{$s$}{s}}
\label{sec:flatness-bounded-s}

The purpose of this section is to give the proof of Theorem~\ref{thm1.0}. We start with the following comparison estimate for $s$-harmonic functions in balls. 

\begin{lemma}\label{lem-thm1.0}
 Let $s\in(0,1)$, $0<r<1$, $R\geq 2,$ $x \in \RR^N$, and let $u \in L^\infty(\RR^N)$ be $s$-harmonic in $B_r(x)$. Then we have 
$$
|u(z)-u(x)| \le   m_{s,r,R} \|u\|_{L^\infty(\RR^N)} \qquad \text{for $z  \in B_{\frac{r}{2}}(x)$}
$$
with $m_{s,r,R}:= \Bigl(s2^{N+2}\Bigl(1+\ln(\frac{R}{2r})\Bigr) +\frac{ r^{2s}}{\Gamma(1-s)}\Bigl( \bigl(1-  \bigl(\frac{3}{4}\bigr)^s\bigr)   + C_{r,R}\Bigr)$.
\end{lemma}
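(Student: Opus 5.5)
After translating, we may assume $x=0$. Since $u$ is bounded on $\RR^N$ and $s$-harmonic in $B_r$, the Poisson representation \eqref{eq:poisson-rep-balls} holds both at $z\in B_{r/2}$ and at the centre $0$. Subtracting the two representations gives
$$
|u(z)-u(0)|\le \|u\|_{L^\infty(\RR^N)}\int_{\RR^N\setminus B_r}\bigl|P_{s,r}(z,y)-P_{s,r}(0,y)\bigr|\,dy ,
$$
so it suffices to show that this kernel integral is at most $m_{s,r,R}$. I will split the domain of integration into the annulus $A:=\{r<|y|<R\}$ and the exterior region $\{|y|\ge R\}$. These produce, respectively, the first (logarithmic) part of $m_{s,r,R}$ and the second part carrying the factor $\frac{r^{2s}}{\Gamma(1-s)}$.

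\emph{Annulus.} On $A$ I simply bound the difference by the sum of the two kernels. For $|z|<r/2<|y|/2$ we have $|z-y|\ge |y|/2$ and $(r^2-|z|^2)^s\le r^{2s}$. Hence both kernels are bounded by $2^N\gamma_{N,s}\,r^{2s}|y|^{-N}(|y|^2-r^2)^{-s}$. Passing to polar coordinates, and using $\gamma_{N,s}\,\omega_{N-1}=\frac{2}{\Gamma(s)\Gamma(1-s)}=\frac{2\sin(\pi s)}{\pi}\le 2s$, the problem reduces to the one-dimensional integral
$$
\int_r^R t^{-1}r^{2s}(t^2-r^2)^{-s}\,dt .
$$
I split this integral at $t=2r$.
\begin{itemize}
\item On $(r,2r)$, substituting $t^2-r^2$ shows the integral is bounded by a constant of the form $\frac{C}{1-s}$. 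Together with the factor $\Gamma(1-s)^{-1}$ coming from $\gamma_{N,s}$, this gives a contribution of order $s$.
\item On $(2r,R)$ we have $(t^2-r^2)^{-s}\le (\tfrac34)^{-s}t^{-2s}$, which yields at most $(\tfrac43)^s\ln\frac{R}{2r}$.
\end{itemize}
Collecting constants should give the term $s2^{N+2}\bigl(1+\ln\frac{R}{2r}\bigr)$.

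\emph{Exterior region.} Here I compare the two kernels instead of adding them, splitting
$$
P_{s,r}(z,y)-P_{s,r}(0,y)=\gamma_{N,s}\,(|y|^2-r^2)^{-s}\Bigl[(r^2-|z|^2)^s\bigl(|z-y|^{-N}-|y|^{-N}\bigr)+\bigl((r^2-|z|^2)^s-r^{2s}\bigr)|y|^{-N}\Bigr].
$$
\begin{itemize}
\item For the first bracket, the estimate \eqref{crR estimate} (valid since $z\in B_r$ and $|y|\ge R$) gives $\bigl||z-y|^{-N}-|y|^{-N}\bigr|\le C_{r,R}|y|^{-N}$.
\item For the second, $|z|<r/2$ gives $0\le r^{2s}-(r^2-|z|^2)^s\le r^{2s}\bigl(1-(\tfrac34)^s\bigr)$.
\end{itemize}
Both pieces are therefore bounded by $\bigl(C_{r,R}+1-(\tfrac34)^s\bigr)$ times $\gamma_{N,s}\,r^{2s}\int_{|y|\ge R}|y|^{-N}(|y|^2-r^2)^{-s}\,dy$. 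In polar coordinates, with $R\ge 2\ge 2r$, this integral is at most
$$
\frac{2}{\Gamma(s)\Gamma(1-s)}\,r^{2s}\Bigl(\tfrac43\Bigr)^s\frac{R^{-2s}}{2s}=\frac{(4/3)^s\,r^{2s}R^{-2s}}{\Gamma(1+s)\,\Gamma(1-s)} .
$$
Since $R^{-2s}(4/3)^s\le 1$ for $R\ge2$ and $\Gamma(1+s)\ge \Gamma(x_{\min})>0.88$, this is bounded by $\frac{r^{2s}}{\Gamma(1-s)}$ up to the harmless constant $1/\Gamma(1+s)$; I expect that constant can be absorbed by being slightly more careful, using $(4/3)^s4^{-s}\le 3^{-s}$. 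This produces the second part of $m_{s,r,R}$.

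\emph{Main obstacle.} The main difficulty is purely in bookkeeping constants so that they match $m_{s,r,R}$ exactly. This is most delicate in the near-boundary integral over $(r,2r)$, where the singularity $(t^2-r^2)^{-s}$ must be integrated explicitly and combined with $\gamma_{N,s}\le \frac{s\,c_N}{\Gamma(1-s)}\cdot(\text{const})$ to yield a clean factor $s$ times a dimension-dependent constant. If the constants do not come out exactly, the structure $m_{s,r,R}=O\bigl(s(1+\ln\frac{R}{r})\bigr)+O\bigl(r^{2s}(1-(\tfrac34)^s+C_{r,R})\bigr)$ is what matters for Theorem~\ref{thm1.0}. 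That is the bound I would aim to secure first.
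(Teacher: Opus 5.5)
Your decomposition is the paper's. You split at $|y|=R$, bound the sum of the two kernels on the annulus with a further split at $|y|=2r$, and on $\{|y|\ge R\}$ use the same two-term splitting of the kernel difference, controlled by the $C_{r,R}$-estimate and by $1-(\tfrac34)^s$. The exterior part closes as you expect. Since $\ln\Gamma$ is convex, $\ln\Gamma(1+s)\ge s\,\Gamma'(1)\approx -0.577\,s\ge -s\ln 3$, so $\Gamma(1+s)\ge 3^{-s}\ge (\tfrac43)^s R^{-2s}$ for $R\ge 2$. Your bound therefore becomes exactly $\frac{r^{2s}}{\Gamma(1-s)}\bigl(1-(\tfrac34)^s+C_{r,R}\bigr)$.

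The gap is in the annulus, exactly where you leave the constant open. With the estimates you actually write down, it does not close. Bounding both kernels by $2^N\gamma_{N,s}r^{2s}|y|^{-N}(|y|^2-r^2)^{-s}$ gives the prefactor $2^{N+1}$, so the annulus term is at most
\[
\frac{2^{N+2}s}{\Gamma(1+s)\Gamma(1-s)}\int_1^{R/r}\tau^{-1}(\tau^2-1)^{-s}\,d\tau .
\]
Your bound $(\tfrac43)^s\ln\frac{R}{2r}$ on $(2,R/r)$ then puts the coefficient $2^{N+2}s\,(\tfrac43)^s\frac{\sin\pi s}{\pi s}=2^{N+2}s\bigl(1+s\ln\tfrac43+O(s^2)\bigr)$ in front of $\ln\frac{R}{2r}$. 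This exceeds $2^{N+2}s$ for small $s$, and since $\ln\frac{R}{2r}$ is unbounded in $R$, the excess cannot be moved into the constant term. Likewise, with $\int_1^2\tau^{-1}(\tau^2-1)^{-s}\,d\tau\le\int_1^2(\tau-1)^{-s}\,d\tau=\frac{1}{1-s}$, the near piece gives $\frac{2^{N+2}s}{\Gamma(1+s)\Gamma(2-s)}$. This is too large, because $\Gamma(1+s)\Gamma(2-s)$ drops to $\frac{\pi}{4}$ at $s=\frac12$.

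The fix is what the paper does: the centre kernel $P_{s,r}(0,y)$ needs no factor $2^N$, so the prefactor is $2^N+1$. By log-convexity and symmetry about $s=\frac12$ one has $\Gamma(1+s)\Gamma(2-s)\ge\Gamma(\tfrac32)^2=\frac\pi4$. Hence the coefficients of $1$ and of $\ln\frac{R}{2r}$ satisfy
\[
\frac{2(2^N+1)s}{\Gamma(1+s)\Gamma(2-s)}\le\frac{8(2^N+1)}{\pi}\,s\le 2^{N+2}s,
\qquad
2(2^N+1)\Bigl(\frac43\Bigr)^s\frac{\sin\pi s}{\pi s}\,s\le\frac{8(2^N+1)}{3}\,s\le 2^{N+2}s .
\]
Your factor $(\tfrac43)^s$ is in fact correct. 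The paper bounds $t^{-1}(t^2-1)^{-s}$ by $t^{-1-2s}$ on $(2,R/r)$, which goes the wrong way, and it is the slack of $2^N+1$ over $2^{N+1}$ that absorbs this factor.

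Finally, your fallback that only the structure of $m_{s,r,R}$ matters is enough for Theorem~\ref{thm1.0}. It does not prove the lemma as stated, which asserts the explicit constant.
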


\begin{proof}
By translation and relabeling we may assume $x = 0$ and fix $R \ge 2$. The Poisson kernel representation~(\ref{eq:poisson-rep-balls}) then yields, for $z\in B_r:=B_r(0)$,
$$
u(z)=\frac{\Gamma(\frac{N}{2})}{\pi^{\frac{N}{2}}\Gamma(1-s)\Gamma(s)}\int_{\RR^N\setminus B_r}\frac{(r^2-|z|^2)^su(y)}{(|y|^2-r^2)^s|z-y|^N}\, dy.
$$
Consequently, 
$$
|u(z)-u(0)| \le A_z + B_z
$$
with 
$$
A_z:= \frac{\Gamma(\frac{N}{2})}{\pi^{\frac{N}{2}}\Gamma(1-s)\Gamma(s)}
\int_{B_R\setminus B_r} \frac{u(y)}{(|y|^2-r^2)^s} \Bigl| \frac{(r^2-|z|^2)^s}{|z-y|^N}-\frac{r^{2s}}{|y|^N}\Bigr|\, dy,
$$
and 
$$
B_z:= \frac{\Gamma(\frac{N}{2})}{\pi^{\frac{N}{2}}\Gamma(1-s)\Gamma(s)}
\int_{\RR^N\setminus B_R} \frac{u(y)}{(|y|^2-r^2)^s} \Bigl| \frac{(r^2-|z|^2)^s}{|z-y|^N}-\frac{r^{2s}}{|y|^N}\Bigr|\, dy. 
$$
We note that, since $|y-z| \ge \frac{|y|}{2}$ for $z \in B_{\frac{r}{2}}$, $y \in \RR^N \setminus B_r$, we have  
\begin{align*}
A_z &\le \frac{\Gamma(\frac{N}{2})\|u\|_{L^\infty(B_R)}}{\pi^{\frac{N}{2}}\Gamma(1-s)\Gamma(s)} \int_{B_R \setminus B_r}  (|y|^2-r^2)^{-s}  \Bigl| \frac{(r^2-|z|^2)^s}{|z-y|^N}-\frac{r^{2s}}{|y|^N}\Bigr|\, dy\\
&\le \frac{\Gamma(\frac{N}{2})\|u\|_{L^\infty(\RR^N)}}{\pi^{\frac{N}{2}}\Gamma(1-s)\Gamma(s)} \int_{B_R \setminus B_r} (|y|^2-r^2)^{-s}  \Bigl( \frac{r^{2s}}{|z-y|^N} +\frac{r^{2s}}{|y|^N}\Bigr)\, dy\\
&\le \frac{(2^N+1) \Gamma(\frac{N}{2})\|u\|_{L^\infty(\RR^N)}r^{2s}}{\pi^{\frac{N}{2}}\Gamma(1-s)\Gamma(s)} \int_{B_R \setminus B_r} (|y|^2-r^2)^{-s}  |y|^{-N}\, dy
\end{align*}
where, since $\frac{R}{r}>2$,
\begin{align*}
 \int_{B_R \setminus B_r} (|y|^2-r^2)^{-s}|y|^{-N}\, dy&=\omega_{N-1}\int_r^R(\tau^2-r)^{-s}\tau^{-1}\, d\tau=\omega_{N-1}r^{-2s}\int_{1}^{R/r}(t^2-1)^{-s}
t^{-1}\, dt\\
&\leq \omega_{N-1}r^{-2s}\Bigl(\int_{1}^{2}(t-1)^{-s}(t+1)^{-s}
\, dt+\int_2^{R/r}t^{-1-2s}\, dt\Bigr)\\
&\leq \omega_{N-1}r^{-2s}\Bigl(\frac{1}{1-s}+\frac{1}{2s}\Big(2^{-2s}-\Big(\frac{R}{r}\Big)^{-2s}\Bigr).
\end{align*}
Since $(0,1)\ni s\mapsto \frac{1}{2s}\Big(2^{-2s}-\Big(\frac{R}{r}\Big)^{-2s}\Bigr)$ is decreasing, we have
$$
\frac{1}{2s}\Big(2^{-2s}-\Big(\frac{R}{r}\Big)^{-2s}\Bigr)\leq \lim_{s\to 0^+} \frac{1}{2s}\Big(2^{-2s}-\Big(\frac{R}{r}\Big)^{-2s}\Bigr)=\ln(\frac{R}{2r})
$$
and therefore we obtain
\begin{align*}
A_z &\le \frac{(2^N+1) \Gamma(\frac{N}{2})\|u\|_{L^\infty(\RR^N)}\omega_{N-1}}{\pi^{\frac{N}{2}}\Gamma(1-s)\Gamma(s)}\Bigl(\frac{1}{1-s}+\ln(\frac{R}{2r})\Bigr)\\
&\le s \frac{2(2^{N} +1)\|u\|_{L^\infty(\RR^N)}}{ \Gamma(2-s)\Gamma(1+s)}\Bigl(1+\ln(\frac{R}{2r})\Bigr)\\
&\le s  2^{N+2}\|u\|_{L^\infty(\RR^N)} \Bigl(1+\ln(\frac{R}{2r})\Bigr).
\end{align*}
Moreover, since $|y|^2-r^2 \ge (1-\frac{r}{R})|y|^2$ for $y \in \RR^N \setminus B_R$, we estimate
\begin{align*}
  B_z &\le \frac{\Gamma(\frac{N}{2})\|u\|_{L^\infty(\RR^N)}}{\pi^{\frac{N}{2}}\Gamma(1-s)\Gamma(s)}
 \int_{\RR^N \setminus B_R}(|y|^2-r^2)^{-s}  \Bigl| \frac{(r^2-|z|^2)^s}{|z-y|^N}-\frac{r^{2s}}{|y|^N}\Bigr|   \, dy
\\
      &\le \frac{\Gamma(\frac{N}{2})\|u\|_{L^\infty(\RR^N)}(1-\frac{r}{R})^{-s}}{\pi^{\frac{N}{2}}\Gamma(1-s)\Gamma(s)} \int_{\RR^N \setminus B_R}|y|^{-2s}  \Bigl| \frac{(r^2-|z|^2)^s}{|z-y|^N}-\frac{r^{2s}}{|y|^N}\Bigr| \, dy\\  
      &\le \frac{\Gamma(\frac{N}{2})\|u\|_{L^\infty(\RR^N)}(1-\frac{r}{R})^{-s}}{\pi^{\frac{N}{2}}\Gamma(1-s)\Gamma(s)} \int_{\RR^N \setminus B_R}|y|^{-2s} \Bigl(  \frac{|(r^2-|z|^2)^s-r^{2s}|}{|y|^N} + (r^2-|z|^2)^s \Bigl| |z-y|^{-N}-|y|^{-N}\Bigr| \Bigr)\, dy\\  
      &\le \frac{\Gamma(\frac{N}{2})\|u\|_{L^\infty(\RR^N)}(1-\frac{r}{R})^{-s}}{\pi^{\frac{N}{2}}\Gamma(1-s)\Gamma(s)} \times\\
     & \left[ \bigl(r^{2s} - (r^2-|z|^2)^s\bigr)
       \int_{\RR^N \setminus B_R}|y|^{-(N+2s)} \,dy + r^{2s} \int_{\RR^N \setminus B_R} |y|^{-2s} \Bigl| |z-y|^{-N}-|y|^{-N}\Bigr| \Bigr)\, dy\right]\\
        &\overset{\eqref{crR estimate}}{\leq} \frac{\Gamma(\frac{N}{2})\|u\|_{L^\infty(\RR^N)}(1-\frac{r}{R})^{-s}}{\pi^{\frac{N}{2}}\Gamma(1-s)\Gamma(s)}  \Bigl( \bigl(r^{2s} - (r^2-|z|^2)^s\bigr)
         + r^{2s} C_{r,R} \Bigr)\int_{\RR^N \setminus B_R} |y|^{-(N+2s)} \, dy\\  
        &\le \frac{ R^{-2s}}{s} \frac{ \|u\|_{L^\infty(\RR^N)}(1-\frac{r}{R})^{-s}}{ \Gamma(1-s)\Gamma(s)} \Bigl( \bigl(r^{2s} - (r^2-|z|^2)^s\bigr)
         + r^{2s} C_{r,R}\Bigr)\\  
        &\le \frac{ R^{-s} \|u\|_{L^\infty(\RR^N)}(R-r)^{-s}}{s \Gamma(1-s)\Gamma(s)} \Bigl( \bigl(r^{2s} -  \bigl(\frac{3r^2}{4}\bigr)^s\bigr)
         + r^{2s} C_{r,R}\Bigr)\\  
    &\le \frac{ r^{2s}R^{-s} \|u\|_{L^\infty(\RR^N)}(R-r)^{-s}}{s \Gamma(1-s)\Gamma(s)} \Bigl( \bigl(1-  \bigl(\frac{3}{4}\bigr)^s\bigr)
         + C_{r,R}\Bigr)\\ 
    &\le \frac{ r^{2s} \|u\|_{L^\infty(\RR^N)}}{\Gamma(1-s)} \Bigl( \bigl(1-  \bigl(\frac{3}{4}\bigr)^s\bigr)
         + C_{r,R}\Bigr), 
\end{align*}
where we have used in the last step that $R \ge 2$ and $R-r\ge 1$ by assumption. Combining the above estimates of $A_z$ and $B_z,$ we finish the proof.
\end{proof}

\begin{corollary}
  \label{cor-thm1.0}
Let $s_0,r \in(0,1)$, $x \in \RR^N$, and let $u_s \in L^\infty(\RR^N)$ be $s$-harmonic in $B_r(x)$ for $s \in (0,s_0]$. Then for every $\varepsilon >0$ there exists $s_\varepsilon \in (0,s_0)$, independent of $x$ and $s_0$, with 
$$
|u(z)-u(x)| \le \varepsilon \qquad \text{for $z  \in B_{\frac{r}{2}}(x)$, $s \in (0,s_\varepsilon)$.}
$$
\end{corollary}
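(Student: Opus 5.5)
The plan is to derive this directly from Lemma~\ref{lem-thm1.0}, by choosing the auxiliary radius $R$ depending on $\varepsilon$ and then letting $s\to 0^+$ in the explicit constant $m_{s,r,R}$. I read the statement with the (implicit) normalization $\|u_s\|_{L^\infty(\RR^N)}\le 1$, or more generally with $\varepsilon$ replaced by $\varepsilon\|u_s\|_{L^\infty(\RR^N)}$. The conclusion should be understood as $|u_s(z)-u_s(x)|\le \varepsilon\|u_s\|_{L^\infty(\RR^N)}$ for $z\in B_{r/2}(x)$. In this form it is exactly what feeds into the proof of Theorem~\ref{thm1.0} under the uniform bound \eqref{eq:them1.0-assumption}.

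\textbf{Step 1: choose $R$.} Recall $C_{r,R}=\frac{Nr}{R}\bigl(1+(1-\frac rR)^{-N-1}\bigr)$. For $R\ge 2$ and $r<1$ we have $\frac rR\le\frac12$, so $C_{r,R}\le \frac{N}{R}(1+2^{N+1})$, and hence $C_{r,R}\to 0$ as $R\to\infty$, uniformly in $r\in(0,1)$. Since $\Gamma(1-s)\ge c>0$ for $s\in(0,1)$ and $r^{2s}\le 1$, I first fix $R=R_\varepsilon\ge 2$ so large that
\[
\frac{r^{2s}}{\Gamma(1-s)}\,C_{r,R}\le \frac{\varepsilon}{3}\qquad\text{for all $s\in(0,\tfrac12]$.}
\]

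\textbf{Step 2: send $s\to 0^+$ with $R$ fixed.} The remaining two terms of $m_{s,r,R}$ are handled separately.
\begin{itemize}
\item The first term is $s\,2^{N+2}\bigl(1+\ln\frac{R}{2r}\bigr)$. It tends to $0$ as $s\to0^+$ because $R$ and $r$ are now fixed.
\item The second term is $\frac{r^{2s}}{\Gamma(1-s)}\bigl(1-(\frac34)^s\bigr)\le C\bigl(1-(\frac34)^s\bigr)$. It also tends to $0$ as $s\to 0^+$.
\end{itemize}
So there is $s_\varepsilon\in(0,\min\{s_0,\tfrac12\})$ such that each of these two terms is at most $\varepsilon/3$ for all $s\in(0,s_\varepsilon)$.

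\textbf{Step 3: combine.} Then $m_{s,r,R}\le\varepsilon$ for $s<s_\varepsilon$, and Lemma~\ref{lem-thm1.0} gives the claim. The choice of $s_\varepsilon$ uses only $N$, $r$ and $\varepsilon$. It does not involve $x$, because the lemma is translation invariant, and it does not involve $s_0$ beyond the trivial requirement $s_\varepsilon<s_0$.

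\textbf{Main obstacle.} There is no real obstacle. The only subtle points are the order of the limits, fixing $R$ before sending $s\to0$ because of the $\ln R$ factor, and the dependence of $s_\varepsilon$ on $r$. That dependence is harmless, since $r$ is fixed in the statement.
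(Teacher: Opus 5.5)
Your proposal is correct and follows the paper's proof. Like the paper, you fix $R$ large so that the $C_{r,R}$ contribution to $m_{s,r,R}$ in Lemma~\ref{lem-thm1.0} is small, and then let $s\to 0^+$ in the two remaining terms. The only difference is cosmetic. The paper absorbs $\|u\|_{L^\infty(\RR^N)}$ into the choice of $R$, so its $s_\varepsilon$ tacitly depends on a uniform bound for $\|u_s\|_{L^\infty(\RR^N)}$. You instead state the conclusion relative to $\|u_s\|_{L^\infty(\RR^N)}$, which is the reading under which the statement is actually true.
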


\begin{proof}
Let $\eps>0$. The explicit representation of $C_{r,R}$ in \eqref{crR estimate} allows to choose $R>2$ large so that $C_{r, R}\leq \frac{\varepsilon}{2+2\|u\|_{L^{\infty}(\RR^N)}}$. Hence, Lemma~\ref{lem-thm1.0} implies, for $z  \in B_{\frac{r}{2}}(x)$,
\begin{align*}
|u(z)-u(x)|\leq \|u\|_{L^{\infty}(\RR^N)}\Bigl(s2^{N+2}\Bigl(1+\ln(\frac{R}{2r})\Bigr) +\frac{ r^{2s}}{\Gamma(1-s)}\Bigl( \bigl(1-  \bigl(\frac{3}{4}\bigr)^s\bigr)\Bigr)+  \frac{\varepsilon}{2},
\end{align*} 
from which it follows that we can fix $s_{\eps}\in(0,s_0]$ as claimed.

\end{proof}

\begin{proof}[Proof of Theorem~\ref{thm1.0}]
This follows in a standard way from Corollary~\ref{cor-thm1.0} by an argument using a chain of balls connecting arbitrary two points in $\Omega$. 
\end{proof}

\section{Continuity of the solution at \texorpdfstring{$s=0$}{s =0}}\label{cont at s=0}
In this section, we prove Part (i) of \cref{thm1.1}. We state the following useful lemma from \cite{JSW20}.
\begin{lemma}[Lemma 2.3, \cite{JSW20}]\label{int:la}
	Let $N\geq 2$, $\Omega\subset \RR^N$ be an open bounded set with $C^{2}$-boundary, $a\in(-1,1)$, and $\lambda \in [-N,1)$. Moreover, for $\eta>0$, we define 
$$
m(a,\lambda,\eta):=
\frac{1}{1-a}  \left\{
  \begin{aligned}
&\frac{1}{1-\lambda} \min \Bigl\{\frac{1}{\lambda-a},1+|\ln \eta|\Bigr\} &&\qquad \text{if $\lambda>a$,}\\
&1\:+\: \eta^{\lambda-a} \min \Bigl\{\frac{1}{a-\lambda},\frac{1}{1-\lambda}(1+|\ln \eta|)\Bigr\} &&\qquad \text{if $\lambda<a$,}\\
&\frac{1}{1-\lambda}\Bigl(1+|\ln \eta|\Bigr) && \qquad \text{if $\lambda=a$.}
  \end{aligned}
\right.
$$
Then there exists a constant $C=C(N,\Omega)>0$ such that
\begin{align*}
\int_{\Omega}\delta_{\Omega}(y)^{-a}|x-y|^{\lambda-N}\ dy\;\leq\; C\frac{m(a,\lambda,\delta_{\Omega}(x))}{1+\delta_{\Omega}(x)^{N-\lambda}} \qquad \text{for $x \in \RR^N \setminus \overline \Omega\:$}
\end{align*}
and
\begin{align}\label{eq:(ii)-int-est}
\int_{B_{r_\Omega}(x) \setminus \Omega}\delta_{\Omega}(y)^{-a}|x-y|^{\lambda-N}\ dy
\;\leq\; C\: m(a,\lambda,\delta_{\Omega}(x)) \qquad \text{for $x \in \Omega$,}
\end{align}
where $r_\Omega:= \diam \:\Omega+1$. 
\end{lemma}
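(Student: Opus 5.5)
The plan is to reduce both estimates to a one-dimensional integral in the distance variable $t=\delta_\Omega(y)$, using the $C^2$-regularity of $\partial\Omega$ through a tubular neighbourhood and the coarea formula, and then to evaluate that integral explicitly in the three regimes $\lambda>a$, $\lambda<a$ and $\lambda=a$.

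\emph{Geometric reduction.} Since $\partial\Omega$ is $C^2$ and compact, there is $\rho_0>0$ such that $\delta_\Omega$ is $C^2$ with $|\nabla\delta_\Omega|=1$ in $\{0<\delta_\Omega<\rho_0\}$, on both sides of $\partial\Omega$. The level sets $\{\delta_\Omega=t\}$, $t<\rho_0$, are uniformly $C^2$ hypersurfaces with uniformly bounded $(N-1)$-measure, and they are uniformly bi-Lipschitz to $\partial\Omega$ via the normal projection. For $y$ in the tube, the coarea formula gives
\[
\int\delta_\Omega(y)^{-a}|x-y|^{\lambda-N}\,dy=\int_0^{\rho_0}t^{-a}\int_{\{\delta_\Omega=t\}}|x-y|^{\lambda-N}\,d\sigma(y)\,dt .
\]
The part of the domain where $\delta_\Omega(y)\ge\rho_0$ is harmless: the weight is bounded there, and $|x-\cdot|^{\lambda-N}$ is integrable on bounded sets because $\lambda>0$, or at least bounded away from $x$ in the relevant cases.

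\emph{Key geometric inequality.} If $x$ and $y$ lie on opposite sides of $\partial\Omega$, the segment $[x,y]$ meets $\partial\Omega$, so
\[
|x-y|\ge \max\{\delta_\Omega(x),\delta_\Omega(y)\}\ge \tfrac12\bigl(\delta_\Omega(x)+t\bigr),\qquad t=\delta_\Omega(y).
\]
Writing $y=\pi(y)+\ldots$ in flattened tubular coordinates around the projection of $x$, the inner surface integral is bounded by
\[
C\int_0^{\infty}\bigl(r+\eta+t\bigr)^{\lambda-N}r^{N-2}\,dr\le C(\eta+t)^{\lambda-1},\qquad \eta:=\delta_\Omega(x).
\]
This bound is uniform in $t$ and uses $\lambda<1$. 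The constant $\frac1{1-\lambda}$ should appear from this $r$-integration near the scale $\eta+t$ when $\lambda$ is close to $1$, since the $r^{N-2}r^{\lambda-N}$ tail behaves like $\int r^{\lambda-2}$. For the second claim, with $x\in\Omega$, we integrate over $B_{r_\Omega}(x)\setminus\Omega$, so $y$ is again on the opposite side and the same inequality holds. The restriction to $B_{r_\Omega}(x)$ keeps the domain bounded, so no decay at infinity is needed.

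\emph{One-dimensional computation.} Both claims now reduce to bounding
\[
I(\eta)=\int_0^{\rho_0}t^{-a}(t+\eta)^{\lambda-1}\,dt .
\]
We split at $t=\eta$. On $(0,\eta)$ we get $\le\eta^{\lambda-1}\eta^{1-a}/(1-a)=\eta^{\lambda-a}/(1-a)$. On $(\eta,\rho_0)$ we get $\le\int_\eta^{\rho_0}t^{\lambda-a-1}\,dt$, which gives:
\begin{itemize}
\item for $\lambda>a$, a bound by $\min\{\frac1{\lambda-a},C(1+|\ln\eta|)\}$;
\item for $\lambda<a$, a bound by $\eta^{\lambda-a}\min\{\frac1{a-\lambda},C(1+|\ln\eta|)\}$;
\item for $\lambda=a$, a bound by $C(1+|\ln\eta|)$.
\end{itemize}
For $\eta\ge1$ everything is bounded by a constant. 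Collecting the prefactors $\frac1{1-a}$ and $\frac1{1-\lambda}$ gives exactly $m(a,\lambda,\eta)$. The term $\eta^{\lambda-a}/(1-a)$ from $(0,\eta)$ is absorbed in each case, since for $\lambda\ge a$ and small $\eta$ it is bounded, and for $\lambda<a$ it is exactly the displayed term.

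Finally, for the first claim with $\delta_\Omega(x)$ large, we use $|x-y|\ge\delta_\Omega(x)$ directly to get
\[
\int_\Omega\delta_\Omega(y)^{-a}|x-y|^{\lambda-N}\,dy\le \delta_\Omega(x)^{\lambda-N}\int_\Omega\delta_\Omega^{-a}\,dy\le \frac{C}{1-a}\,\delta_\Omega(x)^{\lambda-N}.
\]
Combining this with the bound for bounded $\delta_\Omega(x)$ yields the factor $(1+\delta_\Omega(x)^{N-\lambda})^{-1}$.

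The main obstacle is the surface-integral step: making the tangential integral uniformly $\lesssim(\eta+t)^{\lambda-1}$ with the sharp dependence $\frac1{1-\lambda}$ as $\lambda\to1$. This requires care with the curvature of the level sets and with the case $\lambda\le0$, where the kernel is bounded and one must avoid losing logarithms. I would first do the flat half-space model explicitly, then transfer to $\Omega$ via uniform bi-Lipschitz charts, whose Jacobians only affect $C$.
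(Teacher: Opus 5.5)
The paper does not prove this lemma; it quotes it from \cite{JSW20}, so there is no internal argument to compare yours against. Judged on its own, your route is sound and is the standard one. Both claims only involve points $x,y$ on opposite sides of $\partial\Omega$. The region $\{\delta_\Omega\ge\rho_0\}$ and the regime $\delta_\Omega(x)\ge\rho_0$ are trivial. Inside the tube, everything reduces to $\frac{C}{1-\lambda}\int_0^{\rho_0}t^{-a}(t+\eta)^{\lambda-1}\,dt$, and you evaluate its three cases correctly.

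A few points need tightening.

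\emph{The tangential step.} The step you call the main obstacle is easier than you expect and needs no curvature analysis. Let $\pi$ be the nearest-point projection. Since $|x-y|\ge\max\{\eta,t\}$, the triangle inequality gives $|\pi(x)-\pi(y)|\le \eta+t+|x-y|\le 3|x-y|$, and hence $|x-y|\ge\frac16\bigl(|\pi(x)-\pi(y)|+\eta+t\bigr)$. The lower bound $|x-y|\ge\frac12(\eta+t)$ alone, which is all you displayed, does not give the tangential decay. Now parametrize $\{\delta_\Omega=t\}$ by $p\mapsto p\mp t\nu(p)$, whose Jacobian is bounded above and below for $t<\rho_0$. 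Together with $\mathcal H^{N-1}(\partial\Omega\cap B_r(q))\le Cr^{N-1}$ and a layer-cake argument, this gives
\[
\int_{\{\delta_\Omega=t\}}|x-y|^{\lambda-N}\,d\sigma(y)\le C\int_0^\infty (r+s)^{\lambda-N}r^{N-2}\,dr = C\,B(N-1,1-\lambda)\,s^{\lambda-1},\qquad s=\eta+t .
\]
Here $B$ is the Beta function, and $(1-\lambda)B(N-1,1-\lambda)=\Gamma(N-1)\Gamma(2-\lambda)/\Gamma(N-\lambda)$ is bounded for $\lambda\in[-N,1]$. So the sharp factor $\frac1{1-\lambda}$ comes out directly, and no logarithmic loss occurs for $\lambda\le 0$.

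\emph{The region $\{\delta_\Omega\ge\rho_0\}$.} The integrand is harmless there because $|x-y|\ge\delta_\Omega(y)\ge\rho_0$. The reason you gave, $\lambda>0$, fails for part of the admissible range.

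\emph{The final bookkeeping.} You do not get ``exactly'' $m$, only $\le C\,m$, and two facts are needed for that.
\begin{itemize}
\item To absorb the bounded contributions (the deep region, the far tangential part, and the case $\eta\ge\rho_0$), you need $m(a,\lambda,\eta)\ge c(N)/(1-a)$. This holds in all three cases, since $1-\lambda\le N+1$ and $\lambda-a<2$.
\item For $\lambda<a$, your bound carries an overall factor $\frac1{1-\lambda}$ that $m$ does not have. This is harmless, because $\lambda<a<1$ gives $\frac1{1-\lambda}\le\min\{\frac1{1-a},\frac1{a-\lambda}\}$. That inequality dominates both your $(0,\eta)$ term and your $(\eta,\rho_0)$ term by the corresponding entries of $m$.
\end{itemize}
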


The following Theorem can be seen as a generalization of Theorem~\ref{thm1.1}\,(i).

\begin{theorem}\label{thm1.1-section}
  Let $N\geq 2$, $\Omega \subset \RRN$ be an open bounded domain with $C^2$-boundary, and let
  $$
  g \in L^{\infty}_{\loc}(\RR^N\setminus \Omega)\,
  \cap \bigcap \limits_{s \in (0,1)}L^1_s(\RR^N).
  $$
  Moreover, let
  $$
  \tilde g(r):= \fint_{S_r}g(y)\,d\sigma(y),
  $$
  which is well-defined for a.\,e. $r>0$. If 
  \begin{equation}
   \label{assumption-g-tilde} 
  \int_{M_R^\delta} |\tilde g(r)|dr = o(R) \quad \text{as $R \to \infty$}\qquad \text{for every $\delta>0$,} 
  \end{equation}
 where $M_R^\delta := \{r \in [R,2R]\::\: |\tilde g(r)|\ge \delta\}$, then  
 $u_s \to 0$ almost uniformly in $\Omega$ as $s \to 0^+.$
\end{theorem}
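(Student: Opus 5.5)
We split the Poisson representation $u_s(x)=\int_{\RR^N\setminus\Omega}P_s(x,y)g(y)\,dy$ from Lemma~\ref{poisson-representation-new} into a near part and a far part. Fix $R_0>0$ with $\overline\Omega\subset B_{R_0}$, and write $u_s = \int_{B_{R}\setminus\Omega}P_s(x,y)g(y)\,dy+\int_{\RR^N\setminus B_{R}}P_s(x,y)g(y)\,dy$ for a large $R\ge 2R_0$ to be chosen.

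\textbf{Near part.} Since $g\in L^\infty(B_R\setminus\Omega)$, this term is bounded by $\|g\|_{L^\infty(B_R\setminus\Omega)}\int_{B_R\setminus\Omega}P_s(x,y)\,dy$. By the definition \eqref{Psdef} and the standard Green function bound $G_s(x,y)\le C\,\delta_\Omega(x)^s\delta_\Omega(y)^s|x-y|^{-N}$ (with constants uniform as $s\to0^+$), we obtain $P_s(x,y)\le C\, c_{N,s}\,\delta_\Omega(x)^{s}\,\delta_\Omega(y)^{-s}(1+\delta_\Omega(y))^{-N}$ for $y\notin\Omega$. Lemma~\ref{int:la} then controls the resulting integrals. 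Because $c_{N,s}=O(s)$, and the integral over $B_R\setminus\Omega$ of $\delta_\Omega(y)^{-s}|x-y|^{-N}$ is $O(1+|\ln\delta_\Omega(x)|+\ln R)$, we get
\[
\Bigl|\int_{B_R\setminus\Omega}P_s g\Bigr|\le C\,s\,\delta_\Omega(x)^{-\eps}(1+\ln R)\|g\|_{L^\infty(B_R\setminus\Omega)},
\]
which tends to $0$ almost uniformly as $s\to0^+$ for each fixed $R$. An alternative is to compare with $c_{N,s}\int_\Omega G_s(x,z)|z-y|^{-N-2s}dz$ and to use $\cGs_s1\to 1$ (and Lemma~\ref{Lem: lemma 4.2}), getting a bound $\le C s\,\delta^{-\eps}$ times $\int_{B_R\setminus \Omega}\delta_\Omega(y)^{-2s}\dots$.

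\textbf{Far part.} For $|y|\ge R$ and $z\in\Omega$ we have $|z-y|^{-N-2s}=|y|^{-N-2s}(1+O(R_0/|y|))$ by \eqref{crR estimate}. Hence
\[
P_s(x,y)=c_{N,s}[\cGs_s1](x)\,|y|^{-N-2s}\bigl(1+O(R_0/R)\bigr).
\]
The main term becomes $c_{N,s}[\cGs_s1](x)\,\omega_{N-1}\int_R^\infty r^{-1-2s}\tilde g(r)\,dr$, and the error is $O(R_0/R)\, c_{N,s}[\cGs_s 1](x)\int_{|y|>R}|y|^{-N-2s}|g|$. Here $c_{N,s}\omega_{N-1}\approx 2s$ and $[\cGs_s1](x)\to1$ almost uniformly (from Lemma~\ref{Lem: lemma 4.1}(ii) with $f=1$ and Lemma~\ref{Lem: lemma 4.2}, since $\cGs_s=\cFs_s E_\Omega-\cHs_s$). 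It thus suffices to show that $s\int_R^\infty r^{-1-2s}|\tilde g(r)|\,dr\to0$, together with the analogous error involving $s\int_R^\infty r^{-1-2s}\overline{|g|}(r)\,dr$. The second one is the obstacle: we need a bound on spherical averages of $|g|$. For this I would instead use the \emph{exact} decomposition: replace $|z-y|^{-N-2s}$ by its average over spheres. By rotation and the mean-value-type estimate \eqref{alpha-est}, the error is only $O(R_0/|y|)|y|^{-N-2s}|g(y)|$, and since $R_0/|y|$ is integrable against $|y|^{-N}$ at infinity, this error is at most $C s R_0 \int_{|y|>R}|y|^{-N-1-2s}|g|$. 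Because $g\in L^1_{s'}$ for small $s'$ (take $s'=1/2$), this is $O(s)\|g\|_{L^1_{1/2}}$, which tends to $0$.

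\textbf{Main term via dyadic decomposition.} Split $[R,\infty)$ into dyadic blocks $[2^kR,2^{k+1}R]$. On each block, separate the set $M^\delta$ (where $|\tilde g|\ge\delta$) from its complement. On the complement the contribution is at most $\delta\cdot 2s\int_R^\infty r^{-1-2s}dr=\delta R^{-2s}\le\delta$. On $M^\delta$, assumption \eqref{assumption-g-tilde} gives $\int_{M^\delta_{2^kR}}|\tilde g|\le \eta_k 2^kR$ with $\eta_k\to0$, and each such block contributes $\le s(2^kR)^{-1-2s}\eta_k2^kR=s\eta_k(2^kR)^{-2s}$. Summing over $k$ gives $s\sum_k\eta_k(2^kR)^{-2s}\le \sup_{j\ge k_0}\eta_j\cdot s\sum_k 2^{-2sk}+s k_0\max\eta$, and since $s\sum_k 2^{-2sk}\le C$, this is small after choosing $k_0$ and then $s$ small. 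Letting $s\to0$ and then $\delta\to0$ finishes the argument. The main obstacle I expect is establishing the uniform-in-$s$ Poisson kernel bounds used in the near part, with the correct $s$ factor and with boundary weights $\delta_\Omega^{-\eps}$.
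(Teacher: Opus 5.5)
Your argument is essentially the paper's proof: a near part controlled by a uniform Poisson kernel bound and Lemma~\ref{int:la}, and a far part handled through $P_s(x,y)\approx c_{N,s}\mathcal{K}_s(x)|y|^{-N-2s}$ and the dyadic splitting into $M^\delta$ and its complement. The one correction is that the bound you actually use, $P_s(x,y)\le C s\,\delta_\Omega(x)^s\delta_\Omega(y)^{-s}|x-y|^{-N}$ with $C$ independent of $s$, should be quoted from \cite[Theorem 2.10]{Chen99} as the paper does, because your sketched derivation does not work: inserting $G_s(x,z)\lesssim\delta_\Omega(x)^s\delta_\Omega(z)^s|x-z|^{-N}$ into \eqref{Psdef} gives an integral diverging at $z=x$; your intermediate bound with $(1+\delta_\Omega(y))^{-N}$ in place of $|x-y|^{-N}$ is false when $x$ and $y$ approach the same boundary point; and the alternative via Lemma~\ref{Lem: lemma 4.2} only yields bounds of order $\delta_\Omega(y)^{-N-2s}$, which are not integrable near $\partial\Omega$.

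Your handling of the non-radial remainder in the far part is more careful than the paper's. You use the quantitative relative error $O(R_0/|y|)$ together with $g\in L^1_{1/2}(\RR^N)$, whereas the paper only asserts that ``the same estimate'' covers this term. That remainder involves $|g|$ rather than $\tilde g$, so for unbounded $g$ it genuinely requires the quantitative decay you use.
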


\begin{proof} For any $x\in \Omega,$ the solution to \eqref{main eq} can be explicitly written as
\begin{align*}
u_s(x)=\int_{\RR^N\setminus \Omega}P_s(x, y)g(y)\, dy=\int_{B_{r_{\Omega}}(x)\setminus \Omega}P_s(x, y)g(y)\,dy+\int_{B^c_{r_{\Omega}}(x)}P_s(x, y)g(y)\,dy=\mathrm I(x)+\mathrm {II} (x)
\end{align*}
where $r_{\Omega}:=\text{diam}\,\Omega+1.$ We estimate each term below.

\vspace{.5cm}
\noindent{\bf Estimate of $\mathrm{I}(x)$:} Denote $K=\{x\in \RR^N\setminus \Omega\;:\; \delta_{\Omega}(x)<r_{\Omega}\}.$ We use \cite[Theorem 2.10]{Chen99} and \eqref{eq:(ii)-int-est} of Lemma \ref{int:la} in the following estimate for $x \in \Omega$:
\begin{align*}
\mathrm{I}(x)=\int_{B_{r_{\Omega}}(x)\setminus \Omega}P_s(x, y)g(y)\,dy &\leq C s(1-s)\delta_{\Omega}(x)^s \int_{B_{r_{\Omega}}(x)\setminus \Omega} \frac{g(y)}{\delta_{\Omega}(y)^s(1+r^{-1}_0\delta_{\Omega}(y))^s|x-y|^N}\,dy\\
&\leq C s(1-s)\delta^s_{\Omega}(x)\|g\|_{L^{\infty}(K)}\int_{B_{r_{\Omega}}(x)\setminus \Omega} \delta_{\Omega}(y)^{-s}|x-y|^{-N}\,dy\\
&\overset{\text{Lemma \ref{int:la}}}{\leq} C s \delta^s_{\Omega}(x)\|g\|_{L^{\infty}(K)}\left(1+\delta^{-s}_{\Omega}(x)\min\left\{\frac{1}{s}, 1+|\log\delta_{\Omega}(x)|\right\}\right)\\
&\leq s C(\Omega, \|g\|_{L^{\infty}(K)}) \left(1+|\log \delta_{\Omega}(x)|\right)=sC_x. 
\end{align*}
Moreover, the RHS goes to zero as $s \to 0^+$ almost uniformly in $\Omega$. Next, we estimate the second integral.

\vspace{.5cm}
\noindent{\bf Estimate of $\mathrm{II}(x)$:} To estimate $\mathrm{II}(x),$ we use the assumptions on $g$ and \cref{a limit}, which yields the limit
\begin{equation}\label{expand ps fs}
    \lim_{|y|\to \infty}\frac{|y|^{2s+N}P_s(x, y)}{c_{N, s}}= \int_{\Omega}G_s(x, y)\, dy=:\mathcal{K}_s(x) \geq 0 \qquad \text{uniformly in $s \in (0,1)$ and $x \in \Omega$.}
\end{equation}
Note that $\mathcal{K}_s$ is the unique solution of the fractional torsion problem
$$
(-\Delta)^s \mathcal{K}_s =1 \quad \text{in $\Omega$,}\qquad
\mathcal{K}_s  = 0\quad \text{in $\RR^N \setminus \Omega$,}
$$
and it satisfies $\mathcal{K}_s(x) \to \mathcal{K}_0(x) =1_{\Omega}>0$ as $s\to 0^+$ almost uniformly in $\Omega$, see for instance \cite[Remark 3.8]{JSW20}.
The asymptotics \eqref{expand ps fs} allow to write the fractional Poisson kernel as
\begin{align*}
    P_s(x, y)= s\cdot\frac{c_{N,s}}{s}\mathcal{K}_s(x)|y|^{-(2s+N)}+s\, o\bigl(|y|^{-(2s+N)}\bigr)
\end{align*}
where $o(|y|^{-(2s+N)}) \to 0$ as $|y|\to \infty$ uniformly in $s\in (0, 1)$. 
Employing this in $\mathrm{II}(x)$, we get
\begin{align}\label{second estimate}
\mathrm{II}(x)&=\int_{B^c_{r_{\Omega}}(x)}P_s(x, y)g(y)\, dy\nonumber\\
&=c_{N, s}\mathcal{K}_s(x) \int_{B^c_{r_{\Omega}}(x)} |y|^{-(2s+N)}g(y)\, dy+ s \int_{B^c_{r_{\Omega}}(x)}o\bigl(|y|^{-(2s+N)}\bigr)g(y)\, dy.
\end{align}
Now using the assumption of $g,$ we estimate the first integral, which then also gives an estimate for the second integral. For $k \in \mathbb{N},r>0$ and $\delta>0$, we denote
\begin{align*}
M^{\delta}_{r}(k):=\left\{r\in [2^k r, 2^{k+1}r]\,:\,|\tilde{g}(r)|\geq \delta\right\},\\
N^{\delta}_{r}(k):=\left\{r\in [2^k r, 2^{k+1}r]\,:\,|\tilde{g}(r)|<\delta\right\}.
\end{align*}
Moreover, we split the integral in the following way.
\begin{align*}
&\Bigl|\int_{B^c_{r_{\Omega}}(x)}|y|^{-(2s+N)}g(y)\, dy\Bigr| \le \sum_{k=0}^{\infty}\int_{2^k r_{\Omega}}^{2^{k+1}r_{\Omega}} r^{N-1}\Bigl|\fint_{S_r}r^{-(2s+N)}g(y)\, d\sigma(y)\Bigr| dr\\
&= \sum_{k=0}^{\infty}\int_{2^k r_{\Omega}}^{2^{k+1}r_{\Omega}}  r^{-(1+2s)}|\tilde{g}(r)|\, dr\\
&\le \sum_{k=0}^{\infty}\Bigl(\int_{M^{\delta}_{r_{\Omega}}(k)} r^{-(1+2s)}|\tilde{g}(r)|\, dr+ \delta \int_{N^{\delta}_{r_{\Omega}}(k)} r^{-(1+2s)}\, dr\Bigr),
\end{align*}
where 
\begin{equation*}
  \sum_{k=0}^{\infty}\int_{N^{\delta}_{r_{\Omega}}(k)}r^{-(1+2s)}\, dr \le \sum_{k=0}^{\infty} \int_{2^k r_\Omega}^{2^{k+1} r_\Omega}r^{-(1+2s)}\,dr = \int_{1}^\infty r^{-(1+2s)}dr = \frac{r^{-2s}_{\Omega}}{2s},
\end{equation*}
and, for every $k_0 \ge 1$,
\begin{align*}
 & \sum_{k=0}^{\infty} \int_{M^{\delta}_{r_{\Omega}}(k)} r^{-(1+2s)}|\tilde{g}(r)|\, dr\leq 
                                                                                     \sum_{k=0}^{k_0-1} \int_{M^{\delta}_{r_{\Omega}}(k)} r^{-(1+2s)}|\tilde{g}(r)|\, dr + \sum_{k=k_0}^{\infty} \int_{M^{\delta}_{r_{\Omega}}(k)} r^{-(1+2s)}|\tilde{g}(r)|\, dr\\
  &\le C_1(k_0,g) + C(k_0)\sum_{k=k_0}^\infty (2^{k} r_\Omega)^{-2s}= C_1(k_0,g)  + \frac{C_2(k_0) 2^{-2s k_0} r_\Omega^{-2s}}{1-2^{-2s}}\\
&\le C_1(k_0,g) + \frac{C_2(k_0)}{1-2^{-2s}}  
\end{align*}
with
$$
C_1(k_0,g) := \int_{1}^{2^{k_0} r_\Omega}|\tilde{g}(r)|\, dr \quad \text{and}\quad C_2(k_0) = \Bigl(\sup_{k \ge k_0} \frac{1}{2^{k}r_\Omega}\int_{M^{\delta}_{r_{\Omega}}(k)}|\tilde{g}(r)|\, dr\Bigr).
$$
Consequently, we have
\begin{align*}
c_{N, s}\mathcal{K}_s(x) \Bigl|\int_{B^c_{r_{\Omega}}(x)} |y|^{-(2s+N)}g(y)\, dy\Bigr| \le c_{N, s}\mathcal{K}_s(x)\Bigl(\delta \frac{r^{-2s}_{\Omega}}{2s} + C_1(k_0,g) + \frac{C_2(k_0)}{1-2^{-2s}}  \Bigr)
\end{align*}
where $\frac{c_{N, s}}{s} \to c_N$ (\text{see the explicit form of $c_{N, s}$ in \eqref{defn_c(N, s)}}),  $\frac{s}{1-2^{-2s}} \to \frac{1}{\log 2}$, and $\mathcal{K}_s(x) \to 1$ almost uniformly in $\Omega$. Consequently
$$
\limsup_{s \to 0^+}\Bigl( c_{N, s}\mathcal{K}_s(x) \Bigl|\int_{B^c_{r_{\Omega}}(x)} |y|^{-(2s+N)}g(y)\, dy\Bigr|\Bigr) \le \delta \frac{c_N}{2} + \frac{C_2(k_0)}{\log 2}
$$
almost uniformly in $x \in \Omega$. Since this holds for every $\delta>0$, $k_0 \ge 1$ and $C_2(k_0) \to 0$ as $k_0 \to \infty$ by assumption \eqref{assumption-g-tilde}, we have
$$
c_{N, s}\mathcal{K}_s(x) \Bigl|\int_{B^c_{r_{\Omega}}(x)} |y|^{-(2s+N)}g(y)\, dy\Bigr| \to 0 \qquad \text{as $s \to 0^+$}
$$
almost uniformly in $x \in \Omega$. As mentioned above, the same estimate also gives that the second integral of \eqref{second estimate} converges to $0$ uniformly in $x \in \Omega$ as $s \to 0^+$. Consequently, $\mathrm{II}(x) \to 0$ as $s \to 0^+$ almost uniformly in $x \in \Omega$. Since the same holds for $\mathrm{I}(x)$ by the previous estimate, we conclude that $u_s \to 0$ almost uniformly in $\Omega$ as $s\to 0^+,$ as claimed.
\end{proof}

We may now complete the

\begin{proof}[Proof of Theorem~\ref{thm1.1}(i)]
  If $c_g = 0$, the assumption $\tilde g(r) \to 0$ as $r \to \infty$ clearly implies assumption~(\ref{assumption-g-tilde}), and thus the claim follows from Theorem~\ref{thm1.1-section}. If $c_g \not = 0$, then we consider the functions $g_c:= g -c_g$ and $u_{s,c_g} = u_s -c_g$ in place of $g$ and $u_s$ for $s>0$, which then clearly satisfy the assumptions of Theorem~\ref{thm1.1-section} again. So it follows that $u_{s,c_g} \to 0$ as $s \to 0^+$ almost uniformly in $x \in \Omega$, and hence $u_s \to c_g$ as $s \to 0^+$ almost uniformly in $\Omega$, as claimed.
 \end{proof} 

\section{First order expansion}\label{diff at s=0}
In this section, we prove Part (ii) of Theorem~\ref{thm1.1} and Corollary~\ref{cor-harnack}. Before we proceed with the proof, we first state the following lemma, which provides the existence of the limit in (\ref{eq:def-L-g-pv-sense}). To prove the existence of this limit, we may clearly assume $c_g =0$, since otherwise we may replace $g$ by $g-c_g$.  We denote 
\[
\tilde r_\Omega:= \sup_{x \in \Omega}|x|+r_{\Omega},
\]
with $r_{\Omega}=\diam(\Omega)+1$ as before in Lemma \ref{int:la}.
\begin{lemma}
  \label{regarding-def-L-g-pv-sense}
  Let $g \in L^\infty(\R^N \setminus \Omega)$ satisfy~(\ref{eq:first-order-exp-assumption}) with $c_g=0$. Then 
  \begin{equation}
    \label{continuous-function-g}
 x \mapsto \delta_\Omega^\eps(x) \int_{B_r \setminus \Omega}\frac{g(y)}{|x-y|^N}\,dy   
  \end{equation}
  defines a continuous function on $\overline{\Omega}$ for every $\eps>0$, $r>\tilde r_\Omega$
and the limit 
    \begin{equation}
    \label{eq:remainder-g-est}
 \lim_{r \to \infty} \delta_\Omega^\eps(x) \int_{B_r \setminus \Omega}\frac{g(y)}{|x-y|^N}\,dy \qquad \text{exists uniformly in $x \in \Omega$.}
  \end{equation}
  \end{lemma}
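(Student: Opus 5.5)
Fix $\eps>0$ and split the integral at the fixed radius $\tilde r_\Omega$. Write, for $x\in\Omega$ and $r>\tilde r_\Omega$,
$$
\int_{B_r\setminus\Omega}\frac{g(y)}{|x-y|^N}\,dy=\int_{B_{\tilde r_\Omega}\setminus\Omega}\frac{g(y)}{|x-y|^N}\,dy+\int_{B_r\setminus B_{\tilde r_\Omega}}\frac{g(y)}{|x-y|^N}\,dy=:J_1(x)+J_2^r(x).
$$
Since $B_{r_\Omega}(x)\subset B_{\tilde r_\Omega}$ for $x\in\Omega$, the far part satisfies $|x-y|\ge r_\Omega\ge 1$ on $\R^N\setminus B_{\tilde r_\Omega}$. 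The two pieces are handled separately: $J_1$ for continuity and the weight $\delta_\Omega^\eps$, and $J_2^r$ for the limit $r\to\infty$.

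\textbf{The near part $J_1$.} The integrand is singular only as $y\to\partial\Omega$ near $x$. I would apply \eqref{eq:(ii)-int-est} of Lemma~\ref{int:la} with $a=0$, $\lambda=0$, which gives
$$
|J_1(x)|\le C\|g\|_{L^\infty}\bigl(1+|\ln\delta_\Omega(x)|\bigr).
$$
The part of $B_{\tilde r_\Omega}\setminus\Omega$ outside $B_{r_\Omega}(x)$ contributes a bounded amount. Hence $\delta_\Omega^\eps J_1$ is bounded and tends to $0$ as $\delta_\Omega(x)\to0$, so extending it by zero on $\partial\Omega$ is natural.

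For continuity at interior points $x_0\in\Omega$, the kernel $|x-y|^{-N}$ is continuous and uniformly bounded for $x$ near $x_0$ and $y\notin\Omega$, so dominated convergence applies. For continuity at boundary points, the logarithmic bound shows that $\delta_\Omega^\eps J_1\to 0$, which matches the zero extension. The same arguments apply to $J_2^r$, whose kernel is smooth and bounded there. This proves the continuity of \eqref{continuous-function-g} on $\overline\Omega$.

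\textbf{The far part $J_2^r$ (main step).} The task is to show that $J_2^r(x)$ converges uniformly in $x\in\Omega$ as $r\to\infty$; multiplying by the bounded weight $\delta_\Omega^\eps$ then preserves this. I would compare the kernel with its radial version, writing
$$
J_2^r(x)=\int_{B_r\setminus B_{\tilde r_\Omega}}\frac{g(y)}{|y|^N}\,dy+\int_{B_r\setminus B_{\tilde r_\Omega}}g(y)\Bigl(\frac1{|x-y|^N}-\frac1{|y|^N}\Bigr)dy.
$$
In polar coordinates the first term equals
$$
\omega_{N-1}\int_{\tilde r_\Omega}^r\frac{\tilde g(\rho)}{\rho}\,d\rho,
$$
which converges as $r\to\infty$ by \eqref{eq:first-order-exp-assumption}, since $c_g=0$; this term is independent of $x$. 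For the second term, the estimate \eqref{alpha-est} with $|x|\le R_0:=\sup_\Omega|z|$ gives the bound
$$
\Bigl||x-y|^{-N}-|y|^{-N}\Bigr|\le C|y|^{-N-1}\qquad\text{for }|y|\ge\tilde r_\Omega,
$$
with $C$ uniform in $x\in\Omega$. This bound is integrable at infinity against $\|g\|_\infty$, so the tails $\int_{|y|>r}$ tend to $0$ uniformly in $x$. The second term therefore converges uniformly as well. Combining the near and far parts yields \eqref{eq:remainder-g-est}.

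The delicate point is that $\tilde r_\Omega-R_0=r_\Omega\ge1$ has to give a uniform comparison $|x-y|\ge c|y|$ in \eqref{alpha-est}. This holds because $|y|\ge R_0+1$ implies $|x-y|\ge |y|-R_0\ge |y|/(R_0+1)$.
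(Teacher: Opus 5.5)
Your proposal is correct and follows essentially the same route as the paper: subtract the radial kernel $|y|^{-N}$, whose contribution is $\omega_{N-1}\int \tilde g(\rho)\rho^{-1}\,d\rho$ and is controlled by \eqref{eq:first-order-exp-assumption}, and bound the kernel difference by $C|y|^{-N-1}$ uniformly in $x$ via \eqref{alpha-est}. The differences are cosmetic. The paper phrases the limit as a uniform Cauchy criterion on annuli $B_r\setminus B_{r'}$ and handles continuity via the domination $\delta_\Omega^\eps(x)|x-y|^{-N}\le|x-y|^{\eps-N}$, whereas you split at the fixed radius $\tilde r_\Omega$ and use the logarithmic bound from Lemma~\ref{int:la}.
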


  \begin{proof}
    Since for every $y \in \R^N \setminus \Omega$ and every $x \in \Omega$ we have
    $$
    \frac{\delta_\Omega^\eps(x)}{|x-y|^N} \le |x-y|^{\eps-N},
    $$
a standard estimate shows that (\ref{continuous-function-g}) defines a continuous function on $\overline{\Omega}$ for every $r > \tilde r_\Omega$. To prove that the limit in (\ref{eq:remainder-g-est}) exists uniformly in $x \in \Omega$, it thus  suffices to show that
    \begin{equation}
      \label{eq:sup-cauchy}
      \sup_{r>r'} \delta^{\varepsilon}_{\Omega}(x)\int_{B_r \setminus B_{r'}}\frac{g(y)}{|x-y|^{N}}dy \to 0 \qquad \text{as $r' \to \infty$ uniformly in $x \in \Omega$.}   
    \end{equation}
 For this we let $r>r' > \tilde r_\Omega$,  and we note that
    \begin{align*}
        \int_{B_r \setminus B_{r'}}\frac{g(y)}{|x-y|^{N}}dy &= \int_{r'}^{r} \int_{S_t}\frac{g(y)}{|x-y|^{N}}d\sigma(y)dt\\
                                                                  &= \omega_{N-1}\int_{r'}^{r}\frac{\tilde g(t)}{t} dt + \int_{B_r \setminus B_{r'}} g(y)\Bigl(|x-y|^{-N}-|y|^{-N}\Bigr)dy.
      \end{align*}
By (\ref{alpha-est}), we have the estimate
      $$
      \Bigl||x-y|^{-N}-|y|^{-N}\Bigr| \le N |x|\Bigl(|y|^{-N-1} + |x-y|^{-N-1}\Bigr) \le N2^{N+1} |x| |y|^{-N-1}\qquad \text{for $x \in \Omega$ and $|y| \ge 2\tilde r_\Omega$.}
      $$
      Therefore,   
      \begin{align*}
        \Bigl|\int_{B_r \setminus B_{r'}}\frac{g(y)}{|x-y|^{N}}dy\Bigr|&=\Bigl| \omega_{N-1}\int_{r'}^{r}\frac{\tilde g(r)}{r} dr\Bigr|+  \Bigl|\int_{B_r \setminus B_{r'}} g(y)\Bigl(|x-y|^{-N}-|y|^{-N}\Bigr)dy\, dr\Bigr|\\
        &\le \omega_{N-1}\int_{r'}^{r}\frac{|\tilde g(r)|}{r} dr  + N|x| \int_{B_r \setminus B_{r'}} \frac{|g(y)|}{|y|^{N+1}}dy\\
        &\le \omega_{N-1}\int_{r'}^{\infty}\frac{|\tilde g(r)|}{r} dr + N|x| \int_{\RR^N \setminus B_{r'}} \frac{|g(y)|}{|y|^{N+1}}dy,
      \end{align*}
      where
      $$
      \int_{r'}^{\infty}\frac{|\tilde g(r)|}{r} dr \to 0 \quad \text{as}\quad r^{\prime}\to \infty
      $$
      by (\ref{eq:first-order-exp-assumption}) with $c_g =0$ and
      $$
      \int_{\RR^N \setminus B_{r'}} \frac{|g(y)|}{|y|^{N+1}}dy \le \frac{\omega_{N-1}}{r'} \|g\|_{L^\infty(\RR^N\setminus \Omega)} \to 0 \qquad \text{as $r' \to \infty$.}
      $$
      Hence, (\ref{eq:sup-cauchy}) holds and the proof is finished.
  \end{proof}
  
In the following, let $N_\beta:=   \{z \in \R^N\::\: \delta_{\Omega}(z) < \beta\}$ for $\beta>0$ is the $\beta$-neighborhood of $\partial \Omega$ and we denote, $\Omega_{\beta}=\overline{\Omega}\cup N_{\beta}$ as a $\beta$-neighborhood of $\Omega$.

\begin{lemma}
  \label{local-Poisson-kernel-convergence}
  We have
  \begin{align}\label{first claim}
    \lim_{s \to 0^+} \frac{P_s(x, y)}{s}= \frac{c_N}{|x-y|^N} \qquad \text{for $x \in \Omega$, $y \in \R^N \setminus \Omega$.}
  \end{align}
Moreover, if $r>\tilde r_\Omega$ and $\beta>0$, then the convergence in (\ref{first claim}) is uniform in $x \in \Omega \setminus N_\beta$, $y \in B_r \setminus \Omega_{\beta}$.
\end{lemma}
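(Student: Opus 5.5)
Fix $x\in\Omega$ and $y\in\R^N\setminus\overline{\Omega}$; points $y\in\partial\Omega$ form a null set and can be handled by continuity or ignored. The plan is to pass to the limit directly in the defining formula (\ref{Psdef}),
$$
\frac{P_s(x,y)}{s}=\frac{c_{N,s}}{s}\int_\Omega \frac{G_s(x,z)}{|z-y|^{N+2s}}\,dz .
$$
First, the explicit constant (\ref{defn_c(N, s)}) gives $\frac{c_{N,s}}{s}\to \frac{\Gamma(N/2)}{\Gamma(2)\pi^{N/2}}=c_N$. It therefore remains to show
$$
\int_\Omega G_s(x,z)|z-y|^{-N-2s}\,dz\to |x-y|^{-N}.
$$
Heuristically this holds because $G_s(x,\cdot)$ converges to a Dirac mass at $x$ as $s\to0^+$, since $(-\Delta)^s\to\mathrm{id}$.

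To make the Dirac-mass step rigorous, I split
$$
G_s(x,z)=F_s(x-z)-H_s(x,z).
$$
By Lemma~\ref{Lem: lemma 4.2}, $\int_\Omega H_s(x,z)\phi(z)\,dz=O(s^2)\,\delta_\Omega(x)^{-\eps}\|\phi\|_\infty$ for bounded $\phi$. Here $\phi(z)=|z-y|^{-N-2s}$ is bounded by $\delta_\Omega(y)^{-N-2s}$ on $\Omega$, so this term is negligible. For the $F_s$ part, the function $\phi_s(z):=|z-y|^{-N-2s}$ is smooth on a neighborhood of $\overline\Omega$, uniformly in $s\in(0,\frac12]$, when $\delta_\Omega(y)\ge\beta$. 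I can therefore apply Lemma~\ref{Lem: lemma 4.1}(ii) with $f=\phi_s|_{\overline\Omega}$ and $\alpha=1$:
$$
[\cFs_s E_\Omega \phi_s](x)=\phi_s(x)+O\bigl(s\,\delta_\Omega(x)^{-\eps}\bigr)-s[L_\Delta E_\Omega\phi_s](x).
$$
The $L_\Delta$ term is bounded for $x$ away from $\partial\Omega$, by the representation (\ref{alt_integral rep_log}) together with uniform $C^1$ bounds on $\phi_s$. Hence
$$
\int_\Omega G_s(x,z)\phi_s(z)\,dz=|x-y|^{-N-2s}+O(s),
$$
and $|x-y|^{-N-2s}\to|x-y|^{-N}$. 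This proves (\ref{first claim}).

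For the uniformity, all the error constants depend only on $\beta$, on $\eps$ (fixed, say $\eps=\frac12$), on $\delta_\Omega(x)\ge\beta$, and on the $C^1(\overline\Omega)$-norm of $\phi_s$. That norm is bounded uniformly for $\delta_\Omega(y)\ge\beta$ and $s\in(0,\frac12]$. Finally, $\bigl||x-y|^{-N-2s}-|x-y|^{-N}\bigr|\le C s\,|\ln|x-y||\,|x-y|^{-N}$, which is uniformly small because $\beta\le|x-y|\le r+\tilde r_\Omega$ for $y\in B_r\setminus\Omega_\beta$. The restriction $y\in B_r$ enters only here.

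The main obstacle is the uniform control of the $L_\Delta E_\Omega\phi_s$ term and of the constant in Lemma~\ref{Lem: lemma 4.1}(ii) with respect to the family $\phi_s$. That lemma's constant depends on $f$ only through $\|f\|_{C^\alpha(\overline\Omega)}$, which I bound uniformly. For $L_\Delta E_\Omega\phi_s(x)$, I use (\ref{alt_integral rep_log}): since $E_\Omega\phi_s=0$ outside $\Omega$, it reduces to a weakly singular integral over $\Omega$ plus $(h_\Omega(x)+\rho_N)\phi_s(x)$. Both are bounded by $C(\beta)\bigl(1+|\ln\delta_\Omega(x)|\bigr)\|\phi_s\|_{C^1}$, which is uniformly bounded for $\delta_\Omega(x)\ge\beta$.
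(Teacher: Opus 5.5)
Your proposal is correct and follows essentially the same route as the paper. Both split $G_s=F_s-H_s$, apply Lemma~\ref{Lem: lemma 4.1}(ii) to the Riesz part with the $y$-dependent density $|\cdot-y|^{-N-2s}$ (whose H\"older norm on $\overline\Omega$ is bounded by powers of $\delta_\Omega(y)^{-1}$), and use Lemma~\ref{Lem: lemma 4.2} for the $H_s$ part. The difference is only bookkeeping: the paper keeps $c_{N,s}$ inside $\Phi_{s,y}$ and retains the correction $-\tilde h_\Omega(x)\Phi_{s,y}(x)$ explicitly, giving the quantitative bound (\ref{difference-estimate}) that it reuses for the tail estimate in Lemma~\ref{remainder-integral-uniform-est}, whereas you absorb the whole $L_\Delta$ term into an $O(s)$ error, which suffices for this lemma.
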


\begin{proof}
Let $r>\tilde r_\Omega$ and $\beta>0$. It then suffices to show the uniform convergence in (\ref{first claim}) for $x \in \Omega \setminus N_\beta$, $y \in B_r \setminus\Omega_{\beta}$. Let us write
\begin{align*}
    \frac{P_s(x, y)}{s}= \int_{\Omega}\frac{G_s(x, z)}{|y-z|^{N+2s}}\,dz&=\int_{\Omega}\frac{F_s(x-z)}{s}\frac{C_{N,s}}{|y-z|^{N+2s}}\,dz-\frac{c_{N, s}}{s}\int_{\Omega}\frac{H_s(x, z)}{|y-z|^{N+2s}}\,dz\\
    &=\mathrm I (x, y)+\mathrm {II}(x, y).
\end{align*}

\vspace{.5cm}
\noindent{\bf Estimate of $I(x,y)$:} By definition, for $y \in \RR^N\setminus \Omega,$ $I$ can be written as
\begin{align*}
    \mathrm{I}(x,y)=\int_{\Omega}\frac{F_s(x-z)}{s}\frac{C_{N,s}}{|y-z|^{N+2s}}\,dz=\frac{1}{s}\left[\mathbb{F}_s \left(1_{\Omega}\frac{c_{N, s}}{|\cdot-y|^{N+2s}}\right)\right](x)=\frac{1}{s}\left[\mathbb{F}_s \Phi_{s, y}\right](x),
\end{align*}
where 
\begin{align*}
    \Phi_{s, y}(\cdot):=1_{\Omega}\frac{c_{N, s}}{|\cdot-y|^{N+2s}}.
\end{align*}
We note that for fixed $y \in \RR^N\setminus \Omega,$ $\Phi_{s, y}(\cdot)$ is a smooth function on $\Omega$. Moreover, we identify $\mathbb{F}_0$ with the identity operator. Hence, we have
\begin{align*}
    \mathrm{I}(x,y)=\left[\frac{\mathbb{F}_s-id}{s} \Phi_{s, y}\right](x)+\frac{\Phi_{s, y}(x)}{s}.
\end{align*}
We also note that
$$
L_{\Delta}\Phi_{s, y} = L_\Delta^\Omega \Phi_{s,y} + \tilde h_\Omega \Phi_{s,y}
$$
with
$$
[L_\Delta^\Omega \Phi_{s,y}](x) = \int_{\Omega}\frac{\Phi_{s,y}(x)-\Phi_{s,y}(z)}{|x-z|^N}\,dz
= c_{N,s}\int_{\Omega}\frac{|x -y|^{-N-2s}-|z-y|^{-N-2s}}{|x-z|^N}\,dz
$$
and $\tilde h_\Omega = h_{\Omega}+\uprho_N$ with $h_\Omega$ defined in (\ref{def-h-Omega}). Therefore,
\begin{align}
  &\left|\mathrm{I}(x,y)- \Bigl(\frac{1}{s} - \tilde h_\Omega(x)\Bigr)\Phi_{s,y}(x) \right| = \left| \left[\frac{\mathbb{F}_s-id}{s} \Phi_{s, y}\right](x) + [L_{\Delta}\Phi_{s, y}](x)- [L_{\Delta}^\Omega\Phi_{s, y}](x) \right| \nonumber\\
&\leq \left|\left[\frac{\mathbb{F}_s-id}{s} \Phi_{s, y}\right](x)+[L_{\Delta}\Phi_{s, y}](x)\right|+\left|[L_{\Delta}^\Omega \Phi_{s, y}](x)\right|\quad \text{for $x \in \Omega$, $y \in \RR^N \setminus \Omega$.} \label{Eq1.10}
\end{align}
The first term on the RHS of the above inequality can be estimated using \eqref{eq:rn-case-second-claim} of Lemma \ref{Lem: lemma 4.1} with $\alpha = \frac{1}{2}$. Indeed, we have
\begin{align}\label{eq1.11}
\left|\left[\frac{\mathbb{F}_s-id}{s} \Phi_{s, y}\right](x)+[L_{\Delta}\Phi_{s, y}](x)\right| \leq \frac{sC}{\varepsilon \delta_{\Omega}(x)^{\varepsilon}}\|\Phi_{s, y}\|_{C^{\frac{1}{2}}(\Omega)}    
\end{align}
for $x\in \Omega$, $\varepsilon \in (0,1)$, $s \in (0, 1/2]$. Moreover, since
\begin{align}
  \Bigl||x_1-y|^{-N-2s}-|x_2-y|^{-N-2s}\Bigr| &\le (N+2s) |x_1-x_2|\Bigl(|x_1-y|^{-N-2s-1}+|x_1-y|^{-N-2s-1}\Bigr)\notag\\
   &\le 2(N+2s) |x_1-x_2|\delta_\Omega^{-N-2s-1}(y)\label{reuse}\\
                                              &\le 2(N+2s)r_{\Omega}\delta_\Omega^{-N-2s-1}(y) \qquad \text{for $x_1,x_2 \in \Omega$}\notag
\end{align}
by (\ref{alpha-est}) and 
\begin{equation}
  \label{sup-norm-est}
  \|\Phi_{s, y}\|_{L^\infty(\Omega)} \le c_{N,s}\delta_\Omega^{-N-2s}(y),    
\end{equation}
we have   
\begin{equation}
  \label{hölder-norm-est}
  \|\Phi_{s, y}\|_{C^{\frac{1}{2}}(\Omega)} \le  c_{N,s}\delta_\Omega^{-N-2s}(y)+  2(N+2s)r_{\Omega}\delta_\Omega^{-N-2s-1}(y).    
\end{equation}
The second term in \eqref{Eq1.10} can be estimated similarly with \eqref{reuse}. 
\begin{align*}
  |[L_{\Delta}^\Omega\Phi_{s, y}](x)|&\le c_{N,s}\int_{\Omega}\frac{\bigl| |x-y|^{-N-2s}-|z-y|^{-N-2s}\bigr|}{|x-z|^N}\,dz\\
  &\le  2(N+2s)  c_{N,s}\delta_\Omega^{-N-2s-1}(y) \int_{\Omega} |x-z|^{1-N}\,dz \\
  &\le  2(N+2s)  c_{N,s} \delta_\Omega(y)^{-N-2s-1} \int_{B_{r_{\Omega}}(x)} |x-z|^{1-N}\,dz\\
&=2 (N+2s)c_{N,s} \omega_{N-1} r_\Omega \delta_\Omega^{-N-2s-1}(y).
  \end{align*}
  Consequently, using \eqref{hölder-norm-est} in \eqref{eq1.11} and from the above estimate, we have
  \begin{align}
    \label{eq:I-final-est}
    &\left|\mathrm{I}(x,y)- \Bigl(\frac{1}{s} - \tilde h_\Omega(x)\Bigr)\Phi_{s,y}(x) \right|\\
    &\leq \frac{sC}{\varepsilon \delta_{\Omega}(x)^{\varepsilon}}
\Bigl(c_{N,s}\delta_\Omega^{-N-2s}(y)+  2(N+2s)r_{\Omega}\delta_\Omega^{-N-2s-1}(y)\Bigr)+ 2 (N+2s)c_{N,s} \omega_{N-1} r_\Omega \delta_\Omega^{-N-2s-1}(y). \nonumber
  \end{align}
\vspace{.5cm}
\noindent{\bf Estimate of $\mathrm{II}(x,y)$:} To estimate $\mathrm{II}(x,y)$, we use Lemma \ref{Lem: lemma 4.2} and (\ref{sup-norm-est}). Indeed, we have
\begin{align}
  |\mathrm{II}(x,y)| =  \frac{c_{N, s}}{s}\int_{\Omega} \frac{H_s(x, z)}{|y-z|^{N+2s}}\,dz&= \frac{1}{s}\left[\mathbb{H}_s \Phi_{s, y}\right](x) \nonumber\\
  &\leq sC\delta^{-\varepsilon}_{\Omega}(x)\|\Phi_{s, y}\|_{L^{\infty}(\Omega)} \le sc_{N,s} C\delta^{-\varepsilon}_{\Omega}(x) \delta^{-N-2s}_\Omega(y).  \label{II-estimate}
\end{align}
Combining (\ref{Eq1.10}), (\ref{eq:I-final-est}) and (\ref{II-estimate}), we therefore conclude that 
\begin{align}
  \label{difference-estimate}
&\left|\frac{P_s(x, y)}{s} - \Bigl(\frac{1}{s} - \tilde h_\Omega(x)\Bigr)\Phi_{s,y}(x) \right|\nonumber\\
    &\leq \frac{sC}{\varepsilon \delta_{\Omega}(x)^{\varepsilon}}
      \Bigl(c_{N,s}\delta_\Omega^{-N-2s}(y)+  2(N+2s)r_{\Omega}\delta_\Omega^{-N-2s-1}(y)\Bigr)\nonumber\\
  &\qquad + 2 (N+2s)c_{N,s}\omega_{N-1} r_\Omega \delta_\Omega^{-N-2s-1}(y) + sc_{N,s} C\delta^{-\varepsilon}_{\Omega}(x) \delta^{-N-2s}_\Omega(y)\nonumber\\
    &\leq  \frac{s c_{N,s}C}{\varepsilon \delta_{\Omega}(x)^{\varepsilon}} \delta^{-N-2s}_\Omega(y) + 
r_{\Omega} \Bigl(\frac{2 sC (N+2s)}{\varepsilon \delta_{\Omega}(x)^{\varepsilon}}+ 2(N+2s) \omega_{N-1} c_{N,s} \Bigr)\delta_\Omega^{-N-2s-1}(y),
\end{align}
where,  since $c_{N,s}=O(s)$, the right hand side tends to zero as $s \to 0^+$ uniformly in $x \in \Omega \setminus N_\beta$, $y \in B_r \setminus \Omega_{\beta}$.  This completes the proof of the claim \eqref{first claim}, since
$$
\Bigl(\frac{1}{s} - \tilde h_\Omega(x)\Bigr)\Phi_{s,y}(x)= \Bigl(\frac{c_{N,s}}{s} - c_{N,s}\tilde h_\Omega(x)\Bigr)\frac{1}{|x-y|^{N+2s}} \to \frac{c_N}{|x-y|^{N}} \qquad \text{as $s \to 0^+$}
$$
uniformly in $x \in \Omega \setminus N_\beta$, $y \in B_r \setminus \Omega_{\beta}$.
\end{proof}
\begin{corollary}
\label{beta-1-beta-2-r-cor}
If $g \in L^\infty(\R^N \setminus \Omega)$, then for $\beta_0,\beta_1>0$ and $r > \tilde r_\Omega$ we have 
\begin{equation*}
\int_{B_r \setminus \Omega_{\beta_1}}\frac{P_s(\cdot, y)}{s}g(y)\, dy \to
     \int_{B_r \setminus \Omega_{\beta_1}} \frac{c_Ng(y)}{|\cdot-y|^N}\,dy \qquad \text{as $s \to 0^+$ uniformly in $\Omega \setminus N_{\beta_0}$.}
\end{equation*}
\end{corollary}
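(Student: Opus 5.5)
This follows directly from the uniform convergence in Lemma~\ref{local-Poisson-kernel-convergence}, applied with $r>\tilde r_\Omega$ and $\beta:=\min\{\beta_0,\beta_1\}$. The domain of integration $B_r\setminus\Omega_{\beta_1}$ is contained in $B_r\setminus\Omega_\beta$. The evaluation set $\Omega\setminus N_{\beta_0}$ is contained in $\Omega\setminus N_\beta$. So the lemma gives
$$
\eta(s):=\sup_{x\in\Omega\setminus N_{\beta},\; y\in B_r\setminus\Omega_{\beta}}\Bigl|\frac{P_s(x,y)}{s}-\frac{c_N}{|x-y|^N}\Bigr|\to 0 \qquad\text{as } s\to 0^+ .
$$

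For $x\in\Omega\setminus N_{\beta_0}$ we estimate the difference of the two integrals directly:
$$
\Bigl|\int_{B_r\setminus\Omega_{\beta_1}}\Bigl(\frac{P_s(x,y)}{s}-\frac{c_N}{|x-y|^N}\Bigr)g(y)\,dy\Bigr|
\le \eta(s)\,\|g\|_{L^\infty(\R^N\setminus\Omega)}\,|B_r|.
$$
The right-hand side is independent of $x$ and tends to zero, which gives the claimed uniform convergence.

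Both integrals are finite, since every $y$ in the domain satisfies $|x-y|\ge\beta_1$ and the domain has finite measure. No step is a genuine obstacle. The only point to watch is the bookkeeping of the two neighbourhood parameters, which is handled by taking the minimum $\beta$, since $N_\beta\subset N_{\beta_0}$ and $\Omega_\beta\subset\Omega_{\beta_1}$.
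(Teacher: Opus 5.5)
Your proposal is correct and matches the paper's proof: apply Lemma~\ref{local-Poisson-kernel-convergence} with $\beta=\min\{\beta_0,\beta_1\}$ and use the inclusions $\Omega\setminus N_{\beta_0}\subset\Omega\setminus N_\beta$ and $B_r\setminus\Omega_{\beta_1}\subset B_r\setminus\Omega_\beta$. The only difference is that you write out the final integration step via the bound $\eta(s)\,\|g\|_{L^\infty(\R^N\setminus\Omega)}\,|B_r|$, which the paper leaves implicit.
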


\begin{proof}
  Lemma~\ref{local-Poisson-kernel-convergence} with $\beta=\min\{\beta_0,\beta_1\}$ yields that
  $$
\lim_{s \to 0^+} \frac{P_s(x, y)}{s}g(y) = \frac{c_Ng(y)}{|x-y|^N} \qquad \text{uniformly in $x \in \Omega \setminus N_\beta$, $y \in B_r \setminus \Omega_{\beta}$.}
$$
whereas $\Omega \setminus N_{\beta_0} \subset \Omega \setminus N_\beta$ and $B_r \setminus \Omega_{\beta_1} \subset B_r \setminus \Omega_{\beta}$. Hence the claim follows.
\end{proof}

Moreover, we have the following.
\begin{lemma}
\label{remainder-integral-uniform-est}  
If $g \in L^\infty(\R^N \setminus \Omega)$ satisfies (\ref{eq:first-order-exp-assumption}) with $c_g = 0$, then for every $\eps>0$ we have
\begin{equation}
  \label{ps-lim-r-infty}
\delta_\Omega^\eps(x) \int_{\RR^N \setminus B_{r}} \frac{P_s(x,y)g(y)}{s}dy \to 0 \qquad \text{as $r \to  \infty$ uniformly in $s \in (0,\frac{1}{2})$, $x \in \Omega$.}  
\end{equation}
\end{lemma}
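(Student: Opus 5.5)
The plan is to compare $P_s(x,y)$ for large $|y|$ with its radial asymptotic profile $c_{N,s}\mathcal K_s(x)|y|^{-N-2s}$, as in the proof of Theorem~\ref{thm1.1-section}. The radial part can then be handled by the spherical averages $\tilde g$ through assumption (\ref{eq:first-order-exp-assumption}). The non-radial remainder should decay like $|y|^{-N-2s-1}$, which makes it integrable against bounded $g$ uniformly in $s$.

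\textbf{Decomposition.} For $|y|\ge r>2\tilde r_\Omega$ and $x\in\Omega$, I would use (\ref{Psdef}) to write
$$
\frac{P_s(x,y)}{s}=\frac{c_{N,s}}{s}\int_\Omega G_s(x,z)|y|^{-N-2s}\,dz+\frac{c_{N,s}}{s}\int_\Omega G_s(x,z)\bigl(|y-z|^{-N-2s}-|y|^{-N-2s}\bigr)dz .
$$
The first term equals $\frac{c_{N,s}}{s}\mathcal K_s(x)|y|^{-N-2s}$.

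\textbf{Radial term.} Integrating the first term against $g$ over $\R^N\setminus B_r$ in polar coordinates gives
$$
\frac{c_{N,s}}{s}\mathcal K_s(x)\,\omega_{N-1}\int_r^\infty t^{-1-2s}\tilde g(t)\,dt .
$$
Since $t^{-2s}\le 1$ for $t\ge 1$, its absolute value is at most
$$
\frac{c_{N,s}}{s}\,\mathcal K_s(x)\,\omega_{N-1}\int_r^\infty\frac{|\tilde g(t)|}{t}\,dt .
$$
Here $c_{N,s}/s$ is bounded on $(0,\frac12)$ by (\ref{defn_c(N, s)}). I also need $\sup_{s\in(0,1/2)}\|\mathcal K_s\|_{L^\infty(\Omega)}<\infty$. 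This follows from comparison with the torsion function of a large ball, which is explicit and equals $\frac{\Gamma(N/2)}{4^s\Gamma(1+s)\Gamma(N/2+s)}(R^2-|x|^2)^s$, and is therefore uniformly bounded. So the radial term tends to $0$ as $r\to\infty$, uniformly in $s$ and $x$, even without the weight $\delta_\Omega^\eps$.

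\textbf{Non-radial term.} For $z\in\Omega$ and $|y|\ge 2\tilde r_\Omega$, estimate (\ref{alpha-est}) gives
$$
\bigl||y-z|^{-N-2s}-|y|^{-N-2s}\bigr|\le C\tilde r_\Omega|y|^{-N-2s-1}\le C|y|^{-N-1}.
$$
Since $G_s\ge0$, the second term is bounded by
$$
C\,\frac{c_{N,s}}{s}\,\mathcal K_s(x)\,|y|^{-N-1}.
$$
Integrating against $|g|$ over $\R^N\setminus B_r$ yields at most $C\|g\|_{L^\infty}\,r^{-1}$, uniformly in $s$ and $x$.

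Combining the two bounds proves (\ref{ps-lim-r-infty}); the factor $\delta_\Omega^\eps\le(\diam\Omega)^\eps$ is harmless. The main obstacle is the uniform-in-$s$ bound on $\frac{c_{N,s}}{s}\mathcal K_s$. If the ball comparison turns out to be delicate, I would fall back on the uniform limit (\ref{expand ps fs}) from Appendix \cref{a limit}, combined with the boundedness of $\mathcal K_s$ obtained from the maximum principle.
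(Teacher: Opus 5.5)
Your proof is correct, and it takes a different and more direct route than the paper.

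\textbf{Your route.} You use \eqref{Psdef} to split $P_s(x,y)/s$ \emph{exactly} into the radial profile $\frac{c_{N,s}}{s}\mathcal K_s(x)|y|^{-N-2s}$ plus a remainder bounded by $C\frac{c_{N,s}}{s}\mathcal K_s(x)|y|^{-N-1}$. The only inputs are then $G_s\ge 0$ and $\sup_{s\in(0,\frac12)}\|\mathcal K_s\|_{L^\infty(\Omega)}<\infty$. The latter follows from your ball comparison, or more simply from $G_s(x,z)\le F_s(x-z)$ together with \eqref{eq:kappa-N-s-limit}, as in the proof of \cref{a limit}. So no fallback is needed.

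\textbf{The paper's route.} The paper compares $P_s(x,y)/s$ with the \emph{approximate} profile $c_{N,s}\bigl(\frac1s-\tilde h_\Omega(x)\bigr)|y|^{-N-2s}$, which is essentially a first-order expansion in $s$ of $\frac{c_{N,s}}{s}\mathcal K_s(x)$. It controls the error by integrating the pointwise bound \eqref{difference-estimate} from Lemma~\ref{local-Poisson-kernel-convergence}. That bound rests on the splitting $G_s(x,z)=F_s(x-z)-H_s(x,z)$ and on Lemmas~\ref{Lem: lemma 4.1} and~\ref{Lem: lemma 4.2} from \cite{JSW20}. The resulting error terms carry factors $\delta_\Omega(x)^{-\eps}$, and $\tilde h_\Omega$ grows like $|\ln\delta_\Omega(x)|$ near $\partial\Omega$. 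This is exactly why the weight $\delta_\Omega^\eps$ appears in the statement.

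\textbf{What each buys.} Your argument avoids all boundary effects and proves a stronger claim: the tail integral tends to $0$ uniformly in $s\in(0,\frac12)$ and $x\in\Omega$ without any weight. The paper's route only has the convenience of recycling an estimate it already needs for Lemma~\ref{local-Poisson-kernel-convergence}.
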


\begin{proof}
Let $r> \tilde{r}_\Omega.$ Then we have
\begin{align*}
\left|\int_{\RR^N\setminus B_r}\frac{P_s(x, y)g(y)}{s}\, dy\right|
&\leq \left|\int_{\RR^N\setminus B_r}\left(\frac{P_s(x, y)}{s}-c_{N, s}\left(\frac{1}{s}-\tilde h_{\Omega}(x)\right)|y|^{-N-2s}\right)g(y)\, dy\right|\\
&+ \left|\int_{\RR^N\setminus B_r}c_{N, s}\left(\frac{1}{s}-\tilde h_{\Omega}(x)\right)|y|^{-N-2s}g(y)\,dy\right|\\
&\leq ||g||_{L^{\infty}(\RR^N\setminus B_r)}\int_{\RR^N\setminus B_r}\left|\frac{P_s(x, y)}{s}-c_{N, s}\left(\frac{1}{s}-\tilde h_{\Omega}(x)\right)|y|^{-N-2s}\right|\, dy\\
&+\omega_{N-1}\left|\frac{c_{N, s}}{s}-c_{N, s}\tilde h_{\Omega}(x)\right|\int_{r}^{\infty}\frac{|\tilde g(\tau)|}{\tau^{1+2s}}\, d\tau.
\end{align*}
To estimate the second integral above, we note that, since $\frac{c_{N, s}}{s}\to c_{N}$ as $s\to 0^+,$ using the assumption \eqref{eq:first-order-exp-assumption} with $c_g=0,$ we get
\begin{align*}
    \delta^{\eps}_{\Omega}(x)\left|\frac{c_{N, s}}{s}-c_{N, s}\tilde h_{\Omega}(x)\right|\int_{r}^{\infty}\frac{|\tilde g(\tau)|}{\tau}\, d\tau \to 0\quad \text{as}\quad r \to \infty
\end{align*}
uniformly in $s\in (0, \frac{1}{2})$, $x\in \Omega$, and for fixed every $\eps>0$, since $|\tilde h_{\Omega}(x)|=|h_{\Omega}+\rho_N|\leq C(1+|\ln(\delta_{\Omega}(x)|)$ by the explicit representation of $h_{\Omega}$ in \eqref{def-h-Omega}.

In the remainder of the proof we discuss the convergence of the first integral. For this, we use the following two inequalities. The first one is, 
\begin{align*}
\Bigl| |y|^{-N-2s}- |x-y|^{-N-2s}\Bigr| &\le (N+2s) |x| \Bigl(|y|^{-N-2s-1}+|x-y|^{-N-2s-1}\Bigr)\\
&\le 2 (N+2s) \tilde{r}_\Omega \delta_\Omega^{-N-2s-1}(y) \qquad \text{for $x \in \Omega$, $y \in \RR^N \setminus \Omega$,}  
\end{align*}
and the second one is, 
\begin{align}\label{tau-int-est}
    \int_{\RR^N\setminus B_{r}}\delta^{-N-\tau}_{\Omega}(y)\, dy\leq \frac{2^{N+\tau}}{\tau}\omega_{N-1}\frac{1}{r^{\tau}} \quad\text{ for $r>2\tilde r_{\Omega}$ and $\tau>0$.}
\end{align}
To see the latter inequality, note that for $|y|\geq r>2\tilde r_{\Omega}>r_{\Omega}$, we have $\delta_{\Omega}(y)\geq |y|-r_{\Omega}\geq \frac{|y|}{2}.$ Therefore, we conclude
\begin{equation*}
  \int_{\RR^N \setminus B_{r}} \delta_\Omega^{-N-\tau}(y)dy \le 2^{N+\tau} \int_{\RR^N \setminus B_{r}} |y|^{-N-\tau}dy= 2^{N+\tau} \omega_{N-1} \frac{r^{-\tau}}{\tau},
\end{equation*}
as stated in \eqref{tau-int-est}. With this we estimate, for $r>2\tilde{r}_{\Omega}$,
\begin{align}
&\int_{\RR^N \setminus B_{r}} \Bigl|\frac{P_s(x,y)}{s}-c_{N,s}\Bigl(\frac{1}{s} - \tilde h_\Omega(x)\Bigr) |y|^{-N-2s}\Bigr| |g(y)|dy\notag\\
  &\le \int_{\RR^N  \setminus B_{r}} \Bigl|\frac{P_s(x,y)}{s}-c_{N,s}\Bigl(\frac{1}{s} - \tilde h_\Omega(x)\Bigr) |x-y|^{-N-2s}\Bigr|dy\label{estimate remainder}\\
  &+ 2(N+2s) c_{N,s} \tilde{r}_\Omega \Bigl(\frac{1}{s} - \tilde h_\Omega(x)\Bigr)  \|g\|_{L^\infty(\RR^N \setminus \Omega)} \int_{\RR^N  \setminus B_{r}}  \delta^{-N-2s-1}_\Omega(y)dy,\notag          
\end{align}
where
$$
\int_{\RR^N \setminus B_{r}} \delta_\Omega^{-N-2s-1}(y) dy \le 
2^{N+2s + 1} \omega_{N-1} \frac{r^{-2s-1}}{2s+1}\le 
\frac{2^{N+4} \omega_{N-1}}{r} \to 0 \qquad \text{for $r \to \infty$}     
$$
uniformly in $s \in (0,\frac{1}{2})$. Since $c_{N,s}=O(s)$ and $\delta_{\Omega}^{\eps}(x)\tilde{h}_{\Omega}(x)$ is bounded in $\Omega$, we have
\begin{align*}
    2(N+2s) c_{N,s} \tilde{r}_\Omega \Bigl(\frac{1}{s} - \tilde h_\Omega(x)\Bigr)  \|g\|_{L^\infty(\RR^N \setminus \Omega)} \int_{\RR^N  \setminus B_{r}}  \delta^{-N-2s-1}_\Omega(y)dy\to 0 \quad\text{as $r\to \infty$}
\end{align*}
uniformly for $s\in(0,\frac12)$ and almost uniformly in $x\in \Omega$. Thus, it remains to discuss the convergence of \eqref{estimate remainder}. For this, note that, for $r > 2 \tilde r_\Omega $, using (\ref{difference-estimate}), (\ref{tau-int-est}), and \[c_{N, s}\leq \frac{\Gamma(\frac{N}{2}+1)}{\pi^{\frac{N}{2}}} \quad \text{for $s \in (0, \frac{1}{2})$},\] we find that 
\begin{align*}
  &\int_{\RR^N  \setminus B_{r}} \Bigl|\frac{P_s(x,y)}{s}-c_{N,s}\Bigl(\frac{1}{s} - \tilde h_\Omega(x)\Bigr) |x-y|^{-N-2s}\Bigr|dy\\
  &\le \frac{s c_{N,s}C}{\varepsilon \delta_{\Omega}(x)^{\varepsilon}} \int_{\RR^N  \setminus B_{r}} \delta^{-N-2s}_\Omega(y)dy
    + r_{\Omega} \Bigl(\frac{2 sC (N+2s)}{\varepsilon \delta_{\Omega}(x)^{\varepsilon}}+ 4 \omega_{N-1} c_{N,s} \Bigr)\int_{\RR^N  \setminus B_{r}} \delta_\Omega^{-N-2s-1}(y)dy\\
  &\le \frac{s c_{N,s}C}{\varepsilon \delta_{\Omega}(x)^{\varepsilon}}
 \frac{2^{N+2s}}{2s}\omega_{N-1}r^{-2s} 
    + r_{\Omega} \Bigl(\frac{2 s C (N+2s)}{\varepsilon \delta_{\Omega}(x)^{\varepsilon}}+ 4 \omega_{N-1} c_{N,s} \Bigr)2^{N+2s+1} \frac{r^{-2s-1}}{2s+\frac{1}{2}}\\
  &\le   \frac{2^{N}C}{\varepsilon \delta_{\Omega}(x)^{\varepsilon}}\omega_{N-1} \bigl(c_{N,s}r^{-2s}\bigr)
    + r_{\Omega} \Bigl(\frac{ C (N+1)}{\varepsilon \delta_{\Omega}(x)^{\varepsilon}}+ 4\omega_{N-1} \frac{\Gamma(\frac{N}{2}+1)}{\pi^{\frac{N}{2}}} \Bigr)2^{N+3} \frac{1}{r}.
\end{align*}
Since
$$
c_{N,s}r^{-2s}  \to 0 \qquad \text{and}\qquad \frac{1}{r} \to 0 \qquad \text{as $r \to \infty$ uniformly in $s \in (0,\frac{1}{2})$,}
$$
we conclude the proof of (\ref{ps-lim-r-infty}).  
\end{proof}

We may now complete the
\begin{proof}[Proof of Theorem~\ref{thm1.1}(ii)]
  Let $g \in L^\infty(\R^N \setminus \Omega)$ and $\eps>0$. Without loss of generality, by replacing $g$ with $g-c_g$, we may assume that $c_g=0$. We then need to show that
  \begin{equation*}
    \label{eq:sufficient-theorem1-1-ii-0}
\delta^{\varepsilon}_{\Omega}(\cdot)\frac{u_s}{s} = \delta^{\varepsilon}_{\Omega}(\cdot) \int_{\R^N \setminus \Omega}\frac{P_s(\cdot, y)}{s}g(y)\, dy \to \delta^{\varepsilon}_{\Omega}(\cdot) \lim_{r \to \infty}\int_{B_r \setminus \Omega} \frac{c_Ng(y)}{|\cdot-y|^N}\,dy \quad
  \text{as $s \to 0^+$ uniformly on $\Omega$.}
\end{equation*}
By Lemmas~\ref{regarding-def-L-g-pv-sense} and~\ref{remainder-integral-uniform-est}, it suffices to show that for every fixed $r>\tilde r_\Omega$ we have 
  \begin{equation}
    \label{eq:sufficient-theorem1-1-ii}
    \delta^{\varepsilon}_{\Omega}(\cdot) \int_{B_r \setminus \Omega}\frac{P_s(\cdot, y)}{s}g(y)\, dy \to \delta^{\varepsilon}_{\Omega}(\cdot)
    \int_{B_r \setminus \Omega} \frac{c_Ng(y)}{|\cdot-y|^N}\,dy \quad \text{as $s \to 0^+$ uniformly on $\Omega$.}
\end{equation}
For this we let $s \in (0,\frac{1}{2})$ and first recall that, see \cite[Remark 3.8]{Chen99},
\begin{align}\label{EQ4.10}
&\left|\frac{P_s(x,y)g(y)}{s}\right| \leq C(1-s)\frac{|g(y)|}{\delta^s_{\Omega}(y)|x-y|^N}
\end{align}
for $x \in \Omega$, $y \in \R^N \setminus \Omega$, and a constant $C$ independent of $x,y,s$. Moreover, we let $\beta_0= \beta_0(\eps)$ be chosen such that $t \mapsto t^\eps |\log t|$ is increasing on $(0,\beta_0)$. Moreover, we let $x \in N_{\beta_0} \cap \Omega$ and estimate, using Lemma \ref{int:la},
\begin{align*}
  &\delta^{\varepsilon}_{\Omega}(x) \int_{B_r \setminus \Omega}\frac{P_s(x, y)}{s}g(y)\, dy\\
  &\leq C(1-s)\delta^{s+\varepsilon}_{\Omega}(x) \|g\|_{L^{\infty}(\R^N \setminus \Omega)}\Bigl( \int_{B_{r_{\Omega}(x)}\setminus \Omega}\frac{dy}{\delta^s_{\Omega}(y)|x-y|^N}+\int_{B_r \setminus B_{r_{\Omega}(x)}}\frac{1}{|x-y|^{N}}\,dy\Bigr)\\
  &\leq C(1-s)\delta^{s+\varepsilon}_{\Omega}(x)\Bigl(\frac{1}{1-s}\left[1+\frac{\min\left\{\frac{1}{s}, 1+|\log\delta_{\Omega}(x)|\right\}}{\delta^s_{\Omega}(x)}\right]+ \int_{B_{2r} \setminus B_{r_{\Omega}}}|y|^{-N}\,dy\Bigr)\\
&\leq C\delta^{\varepsilon}_{\Omega}(x)\Bigl(2+|\log \delta_{\Omega}(x)|+\omega_{N-1}\log\frac{2r}{r_\Omega}\Bigr)\leq C\beta_0^{\varepsilon}\Bigl(2+|\log \beta_0|+\omega_{N-1}\log\frac{2r}{r_\Omega}\Bigr).
\end{align*}
with $C>0$ independent of $x,s$, where the RHS tends to zero as $\beta_0 \to 0$. Hence, if $\delta>0$ is given, we may fix $\beta_0>0$ small enough to guarantee
\begin{equation}\label{beta0 a}
\delta^{\varepsilon}_{\Omega}(x) \int_{B_r \setminus \Omega}\frac{P_s(x, y)}{s}g(y)\, dy < \delta \qquad \text{for $x \in N_{\beta_0} \cap \Omega$.}
\end{equation}
By a similar but easier argument we may shrink $\beta_0$, if necessary, such that we also have
\begin{equation}\label{beta0 b}
\delta^{\varepsilon}_{\Omega}(x) \int_{B_r \setminus \Omega}\frac{c_N}{|x-y|^N}|g(y)|\, dy \le \delta \quad \text{for $x \in N_{\beta_0} \cap \Omega$.}
\end{equation}

Now we fix $\beta_1 \in (0,\beta_0)$. For $x \in \Omega \setminus N_{\beta_0}$, we note that \eqref{EQ4.10} yields
\begin{align*}
 \delta^{\varepsilon}_{\Omega}(x) \int_{N_{\beta_1} \setminus \Omega}\frac{P_s(x, y)}{s}g(y)\, dy
  &\leq C(1-s)\delta^{s+\varepsilon}_{\Omega}(x) \|g\|_{L^{\infty}(\R^N \setminus \Omega)}\int_{N_{\beta_1}\setminus \Omega}\frac{dy}{\delta^s_{\Omega}(y)|x-y|^N}dy\\
  &\leq \frac{C(1-s)}{\beta_0^N} \delta^{s+\varepsilon}_{\Omega}(x) \|g\|_{L^{\infty}(\R^N \setminus \Omega)} \int_{N_{\beta_1}\setminus \Omega}\frac{dy}{\delta^s_{\Omega}(y)}dy\\
    &\leq \frac{C (\text{diam}\Omega)^{s+\eps}}{\beta_0^N} \|g\|_{L^{\infty}(\R^N \setminus \Omega)} \int_{N_{\beta_1}\setminus \Omega}\frac{dy}{\delta^{1/2}_{\Omega}(y)}dy 
\end{align*}
for $s \in (0,\frac{1}{2})$, where, by monotone convergence and the regularity of $\partial \Omega$,  
$$
\int_{N_{\beta_1}\setminus \Omega}\frac{dy}{\delta^{1/2}_{\Omega}(y)}dy \to 0 \qquad \text{as $\beta_1 \to 0$.}
$$
Hence, we may fix $\beta_1 \in (0,\beta_0)$ depending on $\beta_0$ and $g$ with  
\begin{equation}\label{beta1 a}
\delta^{\varepsilon}_{\Omega}(x) \int_{N_{\beta_1} \setminus \Omega}\frac{P_s(x, y)}{s}g(y)\, dy\le  \frac{C}{\beta_0^N} \|g\|_{L^{\infty}(\R^N \setminus \Omega)} \int_{N_{\beta_1}\setminus \Omega}\frac{dy}{\delta^{1/2}_{\Omega}(y)}dy \le \delta \qquad \text{for $x \in \Omega \setminus N_{\beta_0}$.}
\end{equation}
A similar argument allows us to shrink $\beta_1$ further, if necessary, such that
\begin{equation}\label{beta1 b}
\delta^{\varepsilon}_{\Omega}(x) \int_{N_{\beta_1} \setminus \Omega}\frac{c_N}{|x-y|^N}|g(y)|\, dy \le \delta \quad \text{for $x \in \Omega \setminus N_{\beta_0}.$} 
\end{equation}
Consequently, \eqref{beta0 a}, \eqref{beta0 b} yield
\begin{equation}
  \label{eq:delta-diff-est-1}
\sup_{x \in N_{\beta_0} \cap \Omega} \delta^\eps_\Omega(x) \Bigl|\int_{B_r \setminus \Omega}\frac{P_s(x, y)}{s}g(y)\, dy- \int_{B_r \setminus \Omega}\frac{c_N}{|x-y|^N}|g(y)|\, dy\Bigr| \le 2 \delta
\end{equation}
and \eqref{beta1 a}, \eqref{beta1 b} yield
\begin{equation}
  \label{eq:delta-diff-est-2}
\sup_{x \in \Omega \setminus N_{\beta_0}} \delta^\eps_\Omega(x) \Bigl|\int_{N_{\beta_1} \setminus \Omega}\frac{P_s(x, y)}{s}g(y)\, dy- \int_{N_{\beta_1} \setminus \Omega}\frac{c_N}{|x-y|^N}|g(y)|\, dy\Bigr| \le 2 \delta
\end{equation}
for every $s \in (0,\frac{1}{2})$. Combining (\ref{eq:delta-diff-est-2}) with Corollary~\ref{beta-1-beta-2-r-cor} gives that
\begin{equation*}
  \label{eq:delta-diff-est-3}
\limsup_{s \to 0^+} \sup_{x \in \Omega \setminus N_{\beta_0}} \delta^\eps_\Omega(x) \Bigl|\int_{B_r \setminus \Omega}\frac{P_s(x, y)}{s}g(y)\, dy- \int_{N_{\beta_1} \setminus \Omega}\frac{c_N}{|x-y|^N}|g(y)|\, dy\Bigr| \le 2 \delta,
\end{equation*}
and together with (\ref{eq:delta-diff-est-1}) this implies that
$$
\limsup_{s \to 0^+} \sup_{x \in \Omega} \delta^\eps_\Omega(x) \Bigl|\int_{B_r \setminus \Omega}\frac{P_s(x, y)}{s}g(y)\, dy- \int_{N_{\beta_1} \setminus \Omega}\frac{c_N}{|x-y|^N}|g(y)|\, dy\Bigr| \le 2 \delta.
$$
Since $\delta>0$ was chosen arbitrarily, (\ref{eq:sufficient-theorem1-1-ii}) follows, and the proof is finished.
\end{proof}

We may now also complete the

\begin{proof}[Proof of Corollary~\ref{cor-harnack}]
We note that by Theorem~\ref{thm1.1} we have 
\begin{align*}
    \lim_{s\to 0}\frac{u_s(x)}{s}&=c_N\int_{\RR^N\setminus \Omega}\frac{g(z)}{|x-z|^N}\,dz \leq c_N\int_{\RR^N\setminus \Omega}\frac{g^+(z)}{|x-z|^N}\,dz\\
                                 &\leq c(x,y) c_N\int_{\RR^N\setminus \Omega}\frac{g^+(z)}{|y-z|^N}\,dz = c(x,y) \Bigl(\lim_{s\to 0}\frac{u_s(y)}{s} + c_N\int_{\RR^N\setminus \Omega}\frac{g^-(z)}{|y-z|^N}\,dz\Bigr)
    \end{align*}
    with
    $$
    c(x,y) = \Bigl(\sup_{z \in \RR^N\setminus \Omega} \frac{|y-z|}{|x-z|}\Bigr)^N \le \diam \Bigl(\frac{\diam(\Omega)}{\delta_\Omega(x)}\Bigr)^N \quad \text{for $x,y \in \Omega$,}
    $$
and thus (\ref{harnack}) follows. 
\end{proof}

\section{A counterexample}
\label{sec:counterexample}

In this section, we construct the counterexample that yields the proof of Theorem~\ref{thm-limsup-liminf}. 
For this, let $\Omega = B_1(0) \subset \R^N$, $N \ge 2$, and we construct a boundary data $g \in L^\infty(\R^N \setminus \Omega)$ (independent of $s$) with the property that the unique solutions $u_s$ of (\ref{main eq}) satisfy $0 \le u_s \le 1$ in $\Omega$ for $s \in (0,1)$ and  (\ref{eq:limsup-liminf}). We construct $g$ in the form 
$$
g(x)=\frac{|x|^2f(|x|^2-1)}{|x|^2-1}  \qquad \text{for $|x|>1$},
$$
where $f\in L^{\infty}([0,\infty))$ is a function with $f \equiv 0$ in a right neighborhood of zero. Then we obviously have $g \in L^\infty(\R^N \setminus \Omega)$, and the unique solutions $u_s$ of (\ref{main eq}) satisfy assumption~(\ref{eq:them1.0-assumption}). Therefore, by Theorem~\ref{thm1.0}, we only need to find $f$ with the property that
  \begin{equation*}
    \label{eq:limsup-liminf-0}
  \limsup_{s \to 0^+}u_s(0)=1, \quad \liminf_{s \to 0^+}u_s(0)=0. 
  \end{equation*}
By (\ref{eq:poisson-rep-balls}), we have, by a change of variable,
\begin{align*}
  u_s(0)&= \gamma_{N,s}\int_{\RR^N\setminus B_1(0)}\frac{(1-|x|^2)^s}{(|y|^2-1)^s|y|^N}g(y)\, dy\\
&\omega_{N-1}\gamma_{N,s} \int_1^{\infty}\frac{r f(r^2-1)}{(r^2-1)^{s+1}}\, dr= \frac{\omega_{N-1}\gamma_{N,s}}{2s} s \int_0^{\infty}\frac{f(t)}{t^{1+s}}\, dt,
\end{align*}
where $\gamma_{N,s}=\frac{\Gamma(\frac{N}{2})}{\pi^{\frac{N}{2}}\Gamma(s)\Gamma(1-s)} = \frac{2s}{\omega_{N-1}\Gamma(1+s)\Gamma(1-s)}$  and therefore
$$
\frac{\omega_{N-1}\gamma_{N,s}}{2s} = \frac{1}{\Gamma(1+s)\Gamma(1-s)} \to 1 \qquad \text{as $s \to 0^+$.}
$$
Hence, we need to find $f$ with
  \begin{equation}
    \label{eq:limsup-liminf-f}
  \limsup_{s \to 0^+}s \int_0^{\infty}\frac{f(t)}{t^{1+s}}\, dt =1 \qquad \text{and}\qquad  \quad \liminf_{s \to 0^+} s \int_0^{\infty}\frac{f(t)}{t^{1+s}}\, dt =0. 
  \end{equation}
We choose $f$ of the form 
\begin{equation}
  \label{eq:f-special-choice}
f(t)= \sum_{k=1}^{\infty} 1_{[r_{2k-1},r_{2k}]}(t),
\end{equation}
where $(r_k)_k$ is a strictly increasing sequence with $r_1=1$. Since $0 \le f \le 1_{[1,\infty)}$, we have
\begin{equation}
  \label{eq:bounds-f-integral}
0 \le s \int_0^{\infty}\frac{f(t)}{t^{1+s}}\, dt \le s \int_1^\infty t^{-1-s}dt = 1 \qquad \text{for every $s \in (0,1)$.}
\end{equation}
Moreover, (\ref{eq:f-special-choice}) yields that 
$$
s \int_0^{\infty}\frac{f(t)}{t^{1+s}}\, dt = \sum_{k=1}^\infty\Bigl(\frac{1}{r_{2k-1}^s}- \frac{1}{r_{2k}^s}\Bigr)= \sum_{j=1}^\infty (-1)^{j+1} r_j^{-s}.
$$
We now choose $r_k$ and $s_k$, $k \in \N$ inductively to guarantee that  
\begin{equation}
  \label{eq:sequence-est}
\lim_{k \to \infty} \sum_{j=1}^\infty (-1)^{j+1} r_j^{-s_{2k}}= 0 \qquad \text{and}\qquad \lim_{k \to \infty} \sum_{j=1}^\infty (-1)^{j+1} r_j^{-s_{2k-1}}= 1
\end{equation}
We recall that $r_1 = 1$ has already been chosen and we let $s_1=1$. Next, we let $(\eps_k)_k$ be a sequence of positive numbers with $\eps_k \to 0$ as $k \to \infty$. Assuming that $r_1,\dots,r_{k-1}$ and $s_1,\dots,s_{k-1}$ have already been chosen for some $k \ge 2$, we choose $r_k$ and $s_k$ by a case distinction.

If $k$ is odd, we choose $r_k> r_{k-1}$ sufficiently large to ensure that
$$
r_{k}^{-s_{k-1}}< \frac{\eps_{k-1}}{2}.
$$
Since $\lim \limits_{s \to 0^+}\sum\limits_{j=1}^{k}(-1)^{j+1} r_j^{-s} \to \sum\limits_{j=1}^{k}(-1)^{j+1} = 1$, we may choose $s_k \in (0,1)$ sufficiently small to guarantee that
$$
\sum_{j=1}^{k}(-1)^{j+1} r_j^{-s_{k}}> 1-\frac{\eps_k}{2}. 
$$
If $k$ is even, we choose $r_k > r_{k-1}$ sufficiently large to ensure that 
$$
r_{k}^{-s_{k-1}}< \frac{\eps_{k-1}}{2}.
$$
Similarly as in the first case, since $\lim \limits_{s \to 0^+}\sum\limits_{j=1}^{k}(-1)^{j+1} r_j^{-s} \to \sum\limits_{j=1}^{k}(-1)^{j+1} = 0$, we may choose $s_k \in (0,1)$ sufficiently small to guarantee that
$$
\sum_{j=1}^{k}(-1)^{j+1} r_j^{-s_{k}}< \frac{\eps_k}{2}.
$$
We may now estimate, for $k$ odd, that
$$
\sum_{j=1}^\infty (-1)^{j+1} r_j^{-s_{k}} > 1-\frac{\eps_k}{2} + \sum_{j=k+1}^{\infty}(-1)^{j+1} r_j^{-s_{k}}
\ge 1-\frac{\eps_k}{2} - r_{k+1}^{-s_k} \ge 1 - \eps_k, 
$$
whereas for $k$ even we have 
$$
\sum_{j=1}^\infty (-1)^{j+1} r_j^{-s_{k}} < \frac{\eps_k}{2} + \sum_{j=k+1}^{\infty}(-1)^{j+1} r_j^{-s_{k}}
\le 1-\frac{\eps_k}{2} + r_{k+1}^{-s_k} \le \eps_k. 
$$
Combining these estimates with (\ref{eq:bounds-f-integral}), we have proved~(\ref{eq:sequence-est}), and therefore (\ref{eq:limsup-liminf-f}) holds.

\section{Differentiability of the solution for fixed \texorpdfstring{$s>0$}{s>0}}\label{diff for any s}
In this section, we prove \cref{thm1.3} and Corollary~\ref{cor-thm-1-3}. For this we need to collect some preliminary tools.
The following convergence result is taken from \cite{JSW20}.
\begin{lemma}[Lemma 5.2, \cite{JSW20}]\label{lemma:continuity-smooth}
Let $s_0 \in (0,1]$, $\delta\in(0,\frac{s_0}{2})$, let $(s_n)_n\subset (s_0-\delta,\max\{1,s_0+\delta\}]$ be a sequence with $s_n \to s_0$, 
Moreover, let $(f_n)_n \subset L^\infty(\Omega)$ be a sequence with $f_n \to f_0 \in L^\infty(\Omega)$. Then we have
\[
\cGs_{s_n} f_n \to \cGs_{s_0} f_0 \quad\text{in $C^{s_0-2\delta}_0(\Omega)$ as $s\to s_0,$}
\]
where $\cGs_{(\cdot)}$ is the Green operator defined in \eqref{eq:G-op-definition}.
\end{lemma}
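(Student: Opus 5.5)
The plan is a compactness argument: prove a Hölder bound that is uniform in $n$, extract a convergent subsequence, and identify its limit through the weak formulation. Put $u_n := \cGs_{s_n} f_n$. Each $u_n$ vanishes in $\RR^N\setminus\Omega$ and solves $(-\Delta)^{s_n}u_n = f_n$ in $\Omega$.

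\emph{Step 1 (uniform Hölder bound).} I would show that
$$
\|u_n\|_{C^{s_0-\delta}(\RR^N)} \le C\,\|f_n\|_{L^\infty(\Omega)}
$$
with $C$ independent of $n$. The point is that $s_n$ stays in a compact subset of $(0,1]$ bounded below by $s_0-\delta$, so the constants can be taken uniform.

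This bound comes from two ingredients.

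First, the comparison principle gives the barrier bound $|u_n| \le \|f_n\|_{L^\infty}\,\mathcal K_{s_n}$, where $\mathcal K_s$ is the torsion function. Standard boundary estimates on $C^2$ domains give $\mathcal K_s \le C\delta_\Omega^{s}$ with constants uniform for $s$ in compact subsets of $(0,1]$. Hence $|u_n|\le C\|f_n\|_\infty\,\delta_\Omega^{s_n}$.

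Second, the interior regularity estimates of Ros-Oton--Serra (or Silvestre) give $C^{s}$ bounds on balls $B_{\delta_\Omega(x)/2}(x)$. Their constants are again uniform in $s$ on compact subsets of $(0,1]$. For $s_n=1$ the estimate is the classical Laplace one.

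Combining the two in the usual scaling way gives a global $C^{s_n}$ bound, and hence a $C^{s_0-\delta}$ bound since $s_n > s_0-\delta$. I expect this to be the main obstacle, specifically checking that every constant in the cited regularity theory is uniform in $s$. If that bookkeeping becomes messy, I would fall back on the Green function estimate $G_s(x,y)\le C\min\{|x-y|^{2s-N},\,\delta_\Omega^s(x)\delta_\Omega^s(y)|x-y|^{-N}\}$ together with its gradient version, both with uniform constants, and estimate Hölder quotients directly.

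\emph{Step 2 (compactness).} All $u_n$ vanish outside the bounded set $\Omega$. The embedding $C^{s_0-\delta}$ into $C^{s_0-2\delta}$ is compact on such functions, by Arzelà--Ascoli plus interpolation of Hölder seminorms. So every subsequence of $(u_n)_n$ has a further subsequence converging in $C^{s_0-2\delta}_0(\Omega)$ to some continuous $w$ with $w\equiv 0$ outside $\Omega$.

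\emph{Step 3 (identification of the limit).} Fix $\phi\in C^\infty_c(\Omega)$. The weak formulation gives
$$
\int_{\RR^N} u_n (-\Delta)^{s_n}\phi\,dx = \int_\Omega f_n\phi\,dx .
$$
I would check that $(-\Delta)^{s_n}\phi \to (-\Delta)^{s_0}\phi$ in $L^1(\RR^N)$. Pointwise convergence follows from the Fourier representation, or from the integral form, using continuity of $c_{N,s}$. A uniform integrable majorant is $C(1+|x|)^{-N-2(s_0-\delta)}$.

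Since $u_n\to w$ uniformly with uniformly bounded sup norms, and $f_n\to f_0$, we can pass to the limit on both sides. This yields $\int w(-\Delta)^{s_0}\phi=\int f_0\phi$, so $w$ is a bounded distributional solution of $(-\Delta)^{s_0}w=f_0$ in $\Omega$ with $w=0$ outside.

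By uniqueness of bounded (continuous) solutions of this Dirichlet problem, $w=\cGs_{s_0}f_0$. The limit does not depend on the subsequence, so the whole sequence converges, which proves the lemma.
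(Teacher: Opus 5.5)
The paper does not prove this lemma; it quotes it from \cite{JSW20}. Your argument is correct and is essentially the standard proof behind that citation: an $s$-uniform bound of $\cGs_{s_n} f_n$ in $C^{s_0-\delta}$ from Ros-Oton--Serra type boundary and interior regularity for $s\in[s_0-\delta,1]$, then compactness of the embedding into $C^{s_0-2\delta}_0(\Omega)$, then identification of every subsequential limit via the weak formulation and uniqueness. This is exactly what the exponents $s_0-\delta$ and $s_0-2\delta$ in the statement reflect, and you rightly single out the $s$-uniformity of the regularity constants (including the endpoint $s=1$) as the one delicate point.
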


We also need the following extension of \cite[Theorem 1.1]{CW19}.  

\begin{lemma}
  \label{extension-CW-19}
Let $f \in L^1_0(\R^N)$ with $f \in C^\gamma(U)$ for some $\gamma>0$ and some open neighborhood $U \subset \R^N$ of $\overline{\Omega}$. Then we have 
  \begin{equation}
    \label{diff-f-L-1-0}
  \lim_{\sigma \to 0^+} \Bigl\|\frac{(-\Delta)^\sigma f-f}{\sigma}-\loglap f \Bigr\|_{L^\infty(\Omega)} = 0.
  \end{equation}
\end{lemma}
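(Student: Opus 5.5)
The plan is to reduce to the compactly supported case treated in \cite[Theorem 1.1]{CW19}, and to handle the remaining part, which vanishes near $\overline{\Omega}$, by a direct kernel computation. Fix open sets $\overline{\Omega}\subset V \subset\subset W\subset\subset U$. Choose a cutoff $\eta \in C^\infty_c(W)$ with $0\le \eta\le 1$ and $\eta\equiv 1$ on $V$. Then split
$$
f = f_1 + f_2, \qquad f_1:=\eta f,\qquad f_2:=(1-\eta)f.
$$
Here $f_1 \in C^{\gamma'}_c(\R^N)$ for some $\gamma'\in(0,\gamma]$. For $f_1$ we would apply the uniform version of \cite[Theorem 1.1]{CW19}, or equivalently Lemma~\ref{Lem: lemma 4.1}(i), since $(-\Delta)^\sigma = \cFs_{\sigma}^{-1}$ formally. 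It is cleaner to quote \cite{CW19} directly: for $u \in C^{\gamma'}_c(\R^N)$ one has $\frac{(-\Delta)^\sigma u-u}{\sigma}\to L_\Delta u$ uniformly on $\R^N$ (their Theorem 1.1, which gives $L^p$ convergence for all $p\in(1,\infty]$, where the case $p=\infty$ needs a Hölder assumption). This disposes of $f_1$ on $\Omega$.

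For $f_2$, we use that $f_2\equiv 0$ on $V$ and $f_2 \in L^1_0(\R^N)$. For $x\in\Omega$ the singular part disappears, and both operators are given by absolutely convergent integrals:
$$
(-\Delta)^\sigma f_2(x) = -c_{N,\sigma}\int_{\R^N\setminus V}\frac{f_2(y)}{|x-y|^{N+2\sigma}}\,dy,\qquad L_\Delta f_2(x) = -c_N\int_{\R^N\setminus V}\frac{f_2(y)}{|x-y|^{N}}\,dy.
$$
For $L_\Delta$ this follows from \eqref{integral rep_log}: the principal value term over $B_1(x)$ is $-c_N\int_{B_1(x)} f_2(y)|x-y|^{-N}dy$ since $f_2(x)=0$, and $\rho_N f_2(x)=0$. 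Since $f_2(x)=0$, we need to show that
$$
\sup_{x\in\Omega}\int_{\R^N\setminus V}|f_2(y)|\,\Bigl|\frac{c_{N,\sigma}}{\sigma}|x-y|^{-N-2\sigma}-c_N|x-y|^{-N}\Bigr|\,dy \to 0 .
$$
Let $d:=\operatorname{dist}(\Omega,\R^N\setminus V)>0$. Pointwise the integrand tends to zero, since $c_{N,\sigma}/\sigma\to c_N$. For domination we use $|x-y|\ge d$ and $1+|y|\le C_d|x-y|$ for $x\in\Omega$, $y\notin V$. Then for $\sigma\in(0,\frac12)$ the integrand is bounded by
$$
C|f_2(y)|\,|x-y|^{-N}\max\{1,d^{-1}\} \le C'|f_2(y)|(1+|y|)^{-N},
$$
which is integrable because $f \in L^1_0$. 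Uniformity in $x$ follows from the elementary estimate
$$
\Bigl|\tfrac{c_{N,\sigma}}{\sigma}t^{-2\sigma}-c_N\Bigr|\le \Bigl|\tfrac{c_{N,\sigma}}{\sigma}-c_N\Bigr|\,t^{-2\sigma} + c_N\,|t^{-2\sigma}-1|
$$
with $t=|x-y|\ge d$. The first term is uniformly small. The second is at most $C\sigma|\ln t|$ on $\{t\le R\}$, and is bounded on $\{t>R\}$, where the tail $\int_{|y|>R/2}|f|(1+|y|)^{-N}$ is small. Splitting at a large $R$ in this way gives the uniform convergence.

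Finally, add the two pieces using linearity of $(-\Delta)^\sigma$ and $L_\Delta$ (pointwise on $\Omega$; $f$ is Dini continuous there, so $L_\Delta f$ is pointwise defined). The main obstacle is the $f_1$ step: one must make sure the available result from \cite{CW19} truly gives sup-norm convergence for merely Hölder, compactly supported functions. If it does not, I would prove it by hand. Split $\frac{(-\Delta)^\sigma u-u}{\sigma}-L_\Delta u$ into a near part over $B_1(x)$, using $\frac{c_{N,\sigma}}{\sigma}|z|^{-2\sigma}\to c_N$ together with $|u(x)-u(y)|\le C|x-y|^{\gamma'}$ as domination, and a far part plus the constant term, where the constant $\rho_N$ arises from expanding $c_{N,\sigma}\int_{|z|>1}|z|^{-N-2\sigma}dz$ to first order in $\sigma$. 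Each piece is then uniform in $x$ thanks to the uniform Hölder bound and compact support.
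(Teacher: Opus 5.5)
Your proof is correct and follows the paper's argument essentially step by step. It uses the same cutoff splitting $f=\eta f+(1-\eta)f$, and applies \cite[Theorem 1.1]{CW19} to the compactly supported H\"older part, which does give uniform convergence on $\R^N$. For the part vanishing near $\overline{\Omega}$, it compares $\frac{c_{N,\sigma}}{\sigma}|x-y|^{-N-2\sigma}$ with $c_N|x-y|^{-N}$ directly, with uniformity obtained by splitting at a large radius and using the $L^1_0$-tail of $f$. The only inaccuracy is your aside that Lemma~\ref{Lem: lemma 4.1}(i) is an equivalent substitute: it concerns the Riesz operator $\cFs_s$, not $(-\Delta)^\sigma$. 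Your argument never relies on it, so this does not affect the proof.
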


\begin{proof}
Let $\phi\in C^{\infty}_c(U) \subset C^\infty_c(\R^N)$ be a function with $0\leq \phi\leq 1$ and $\phi\equiv 1$ in an open neighborhood $\tilde U \subset U$ of $\overline{\Omega}$. Then, writing
$$
f=f \phi +f\left(1-\phi\right)
$$
we have  $f \phi \in C^{\gamma}_c(\RR^N)$. Hence, using \cite[Theorem 1.1]{CW19}, we obtain
\begin{align*}
 \left(\frac{(-\Delta)^{\sigma}-\id}{\sigma}\right)f\phi\to L_{\Delta}f\phi\,\,\, \text{as}\,\,\, \sigma \to 0\,\,\, \text{in}\,\,\, L^{\infty}(\RR^N).  
\end{align*}
Now, to show
\begin{equation}
\label{asdfg}
\left(\frac{(-\Delta)^{\sigma}-\id}{\sigma}\right)f(1-\phi) \to  L_{\Delta}f(1-\phi)\,\,\, \text{as}\,\,\, \sigma\to 0\,\,\, \text{in}\,\,\, L^{\infty}(\Omega),   
\end{equation}
note that for $x \in \Omega,$ we have 
\begin{equation*}\label{asdfg-2}
  \left[\left(\frac{(-\Delta)^{\sigma}-\id}{\sigma}\right)f(1-\phi)\right](x)=
  \left[\left(\frac{(-\Delta)^{\sigma}}{\sigma}\right)f(1-\phi)\right](x)= \frac{c_{N, \sigma}}{\sigma}\int_{\RR^N\setminus \tilde U}\frac{f(\phi-1)(y)}{|x-y|^{N+2\sigma}}\,dy 
\end{equation*}
and 
\begin{equation*}\label{asdfg-1}
  \left[L_{\Delta}f(1-\phi)\right](x)= c_N \int_{\RR^N\setminus \Omega_0}\frac{f(\phi-1)(y)}{|x-y|^{N}}\,dy. 
\end{equation*}
Hence,
\begin{align}
\left|  \Bigl[\Bigl(\frac{(-\Delta)^{\sigma}-\id}{\sigma}\Bigr)f(1-\phi)\Bigr](x)-  \left[L_{\Delta}f(1-\phi)\right](x) \right| \le \int_{\RR^N\setminus \tilde U}\Bigl|\frac{c_{N, \sigma}}{\sigma |x-y|^{N+2\sigma}}- \frac{c_N}{|x-y|^{N}}\Bigr||f(y)| \,dy.  \label{asdfg-3}
\end{align}
Next, we let $\eps>0$ and set $B_r:= B_r(0)$ for $r>0$. Since $f \in L^1_0(\R^N)$, we may choose $R>1$ sufficiently large with $\Omega \subset \tilde U \subset B_{R/2}$ and such that 
\begin{equation}
  \label{eq:remainder-f-s-L-1}
\int_{\R^N \setminus B_R} \frac{|f(y)|}{(1+|y|)^N} \,dy \le \eps.
\end{equation}
We note that,
$$
|x-y| \ge |y|-|x| \ge |y|- \frac{R}{2} \ge \frac{|y|}{2} \qquad \text{for $x \in \Omega$, $y \in \R^N \setminus B_R$},
$$
and therefore from \eqref{eq:remainder-f-s-L-1}, we have
\begin{align*}
  & \int_{\RR^N\setminus B_R}\Bigl|\frac{c_{N, \sigma}}{\sigma |x-y|^{N+2\sigma}}- \frac{c_N}{|x-y|^{N}}\Bigr||f(y)| \,dy\\
  &\le \int_{\R^N \setminus B_R} \frac{|f(y)|}{(1+|y|)^N} \,dy\: \Bigl\| \frac{c_N (1+|\cdot|)^{N}}{|x-(\cdot)|^{N}} \Bigl(\frac{c_{N, \sigma}}{\sigma c_N } |x-(\cdot)|^{-2\sigma}- 1\Bigr)\Bigr\|_{L^\infty(\R^N \setminus B_R)}\\
  &\le \eps \Bigl\| 2^N c_N \frac{(1+|\cdot|)^{N}}{|\cdot|^{N}} \Bigl(\frac{c_{N, \sigma}}{c_N \sigma} 2^{2\sigma}|\cdot |^{-2\sigma} + 1\Bigr)\Bigr\|_{L^\infty(\R^N \setminus B_R)} \le \eps  4^{N} c_N  \Bigl(\frac{c_{N, \sigma}}{\sigma c_N }4^{\sigma} + 1\Bigr) \qquad \text{for $x \in \Omega$,}
\end{align*}
which, since $\frac{4^{\sigma}c_{N, \sigma}}{\sigma c_N }\to 1$ as $\sigma \to 0^+$, implies that
\begin{equation}
  \label{eq:eps-R-est-1}
\limsup_{\sigma \to 0^+}\sup_{x \in \Omega} \int_{\RR^N\setminus B_R}\Bigl|\frac{c_{N, \sigma}}{\sigma |x-y|^{N+2\sigma}}- \frac{c_N}{|x-y|^{N}}\Bigr||f(y)| \,dy \le \eps  2^{2N+1} c_N.
\end{equation}
Moreover, since $\tau:=\text{dist} (\Omega, \R^N \setminus \tilde U)>0$, we have
$$
\frac{c_{N, \sigma}}{\sigma c_N } |x-y|^{-2\sigma} \to 1 \qquad \text{as $\sigma \to 0^+$ uniformly in $x \in \Omega$, $y \in B_R \setminus \tilde U,$}
$$
and also 
\begin{align*}
&\int_{B_R \setminus \tilde U}\Bigl|\frac{c_{N, \sigma}}{\sigma |x-y|^{N+2\sigma}}- \frac{c_N}{|x-y|^{N}}\Bigr||f(y)| \,dy\\  
  &\le \int_{\R^N} \frac{|f(y)|}{(1+|y|)^N} \,dy  \Bigl\| \frac{c_N (1+|\cdot|)^{N}}{|x-(\cdot)|^{N}} \Bigl(\frac{c_{N, \sigma}}{c_N \sigma} |x-(\cdot)|^{-2\sigma}- 1\Bigr)\Bigr\|_{L^\infty(B_R \setminus \tilde U)}\\   
  &\le \frac{c_N (R+1)^{N}}{\tau^{N}}  \int_{\R^N} \frac{|f(y)|}{(1+|y|)^N} \,dy
    \Bigl\|  \Bigl(\frac{c_{N, \sigma}}{\sigma c_N } |x-(\cdot)|^{-2\sigma}- 1\Bigr)\Bigr\|_{L^\infty(B_R \setminus \tilde U)},  
\end{align*}
which implies that 
\begin{equation}
  \label{eq:eps-R-est-2}
\lim_{\sigma \to 0^+}\sup_{x \in \Omega} \int_{B_R \setminus \tilde U}\Bigl|\frac{c_{N, \sigma}}{\sigma |x-y|^{N+2\sigma}}- \frac{c_N}{|x-y|^{N}}\Bigr||f(y)| \,dy =0. 
\end{equation}
Combining (\ref{eq:eps-R-est-1}) and (\ref{eq:eps-R-est-2}) gives 
$$
\limsup_{\sigma \to 0^+}\sup_{x \in \Omega} \int_{\RR^N\setminus \tilde U}\Bigl|\frac{c_{N, \sigma}}{\sigma |x-y|^{N+2\sigma}}- \frac{c_N}{|x-y|^{N}}\Bigr||f(y)| \,dy \le \eps  2^{2N+1} c_N
$$
for every $\eps>0$ and therefore 
\begin{equation*}
  \label{eq:eps-R-est-3}
\lim_{\sigma \to 0^+}\sup_{x \in \Omega} \int_{\RR^N\setminus \tilde U}\Bigl|\frac{c_{N, \sigma}}{\sigma |x-y|^{N+2\sigma}}- \frac{c_N}{|x-y|^{N}}\Bigr||f(y)| \,dy = 0.
\end{equation*}
Consequently, (\ref{asdfg}) follows by (\ref{asdfg-3}). This complete the proof of \eqref{diff-f-L-1-0}.
\end{proof}

Next, we need the following observation.

\begin{lemma}
  \label{observation-pointwise}
  Let $0<s<\frac{\alpha}{2}<1$ and $g \in L^1_s(\R^N) \cap C^\alpha_{\loc}(\Omega)$.
Then we have 
\begin{equation}
  \label{eq:consistency-definitions}
\int_{\Omega}[(-\Delta)^sg] \phi\,dx =  \int_{\R^N}g (-\Delta)^s \phi\,dx \qquad \text{for all $\phi \in C^\infty_c(\Omega)$.}  
\end{equation}
\end{lemma}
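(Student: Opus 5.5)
The plan is to prove \eqref{eq:consistency-definitions} by a Fubini argument on the truncated singular integrals. Since $\phi$ has compact support in $\Omega$ and $g \in C^\alpha_{\loc}(\Omega)$ with $\alpha>2s$, the pointwise integral $(-\Delta)^s g(x)$ exists for $x\in\Omega$ and is locally bounded there, so both sides of \eqref{eq:consistency-definitions} are finite. Fix $K:=\supp\phi\subset\Omega$ and choose $\rho>0$ with $K_{2\rho}:=\{x:\operatorname{dist}(x,K)\le 2\rho\}\subset\Omega$. For $\eps\in(0,\rho)$, define the truncated operator
$$
(-\Delta)^s_\eps u(x):=c_{N,s}\int_{\R^N\setminus B_\eps(x)}\frac{u(x)-u(y)}{|x-y|^{N+2s}}\,dy.
$$

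\textbf{Step 1: symmetry at fixed $\eps$.} We will show that
$$
\int_{\R^N}[(-\Delta)^s_\eps g]\,\phi\,dx=\int_{\R^N} g\,(-\Delta)^s_\eps\phi\,dx .
$$
Expand both sides into the pieces $g(x)\phi(x)\int_{|x-y|\ge\eps}|x-y|^{-N-2s}dy$ and $\iint_{|x-y|\ge\eps} g(y)\phi(x)|x-y|^{-N-2s}\,dy\,dx$. The first piece is the same on both sides. For the double integral, absolute integrability holds: $\phi$ is bounded with compact support, the kernel is bounded by $\eps^{-N-2s}$, and $|x-y|^{-N-2s}\le C_{K,\eps}(1+|y|)^{-N-2s}$ uniformly for $x\in K$. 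Since $g\in L^1_s(\R^N)$, Fubini applies, and symmetry of the kernel in $x$ and $y$ gives the claim.

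\textbf{Step 2: right-hand side limit.} Since $\phi\in C^\infty_c$, standard estimates (symmetrize the integral, then Taylor expand to second order) give $(-\Delta)^s_\eps\phi\to(-\Delta)^s\phi$ uniformly on $\R^N$ as $\eps\to0$. We also need a dominating bound $|(-\Delta)^s_\eps\phi(x)|\le C(1+|x|)^{-N-2s}$, uniform in $\eps$. For $x$ far from $K$ this is immediate, because $\phi(x)=0$ and only the tail $\int\phi(y)|x-y|^{-N-2s}dy$ remains. Since $g\in L^1_s$, dominated convergence then yields $\int g(-\Delta)^s_\eps\phi\to\int g(-\Delta)^s\phi$.

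\textbf{Step 3: left-hand side limit.} This is the main obstacle, since it is where the H\"older condition $\alpha>2s$ enters. For $x\in K$, split
$$
(-\Delta)^s_\eps g(x)=c_{N,s}\int_{\eps\le|x-y|<\rho}\frac{g(x)-g(y)}{|x-y|^{N+2s}}\,dy+c_{N,s}\int_{|x-y|\ge\rho}\frac{g(x)-g(y)}{|x-y|^{N+2s}}\,dy .
$$
The second term does not depend on $\eps$, and it is bounded uniformly for $x\in K$ by $C(\|g\|_{L^\infty(K)}+\|g\|_{L^1_s})$. In the first term, $y\in K_{\rho}\subset\Omega$, so $|g(x)-g(y)|\le [g]_{C^\alpha(K_{2\rho})}|x-y|^\alpha$. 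This gives an integrable majorant $|x-y|^{\alpha-2s-N}$ on $B_\rho(x)$, uniform in $x\in K$. Hence $(-\Delta)^s_\eps g\to(-\Delta)^s g$ uniformly on $K$, and the convergence of the left-hand side follows. Combining Steps 1--3 proves \eqref{eq:consistency-definitions}. As a byproduct, the pointwise $(-\Delta)^s g$ (in $C^{\alpha-2s}_{\loc}(\Omega)$) coincides with the distributional one in $\Omega$.
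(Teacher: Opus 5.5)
Your argument is correct, but it takes a different route from the paper.

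The paper never truncates. It writes $(-\Delta)^s\phi$ in the symmetric second-difference form $\frac{c_{N,s}}{2}\int\frac{2\phi(x)-\phi(x+z)-\phi(x-z)}{|z|^{N+2s}}\,dz$ and interchanges the $x$- and $z$-integrals. It then translates $x\mapsto x\mp z$ to move the differences from $\phi$ onto $g$, and interchanges again to reach $\int_K\phi(x)\,\frac{c_{N,s}}{2}\int\frac{2g(x)-g(x+z)-g(x-z)}{|z|^{N+2s}}\,dz\,dx$. This is shorter and needs no limit. However, both uses of Fubini require joint absolute integrability, which the paper leaves unchecked. For the second interchange, this is exactly where $\alpha>2s$ enters: near $z=0$ one uses $|2g(x)-g(x+z)-g(x-z)|\le 2[g]_{C^\alpha}|z|^\alpha$, and $g\in L^1_s$ handles large $|z|$.

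Your truncation makes each Fubini step trivial, since the kernel is bounded off the $\eps$-diagonal. It also works directly with the principal value definition \eqref{defn_frac_lap}, and it isolates the use of $\alpha>2s$ in one dominated-convergence step (your Step 3). The price is two extra limit passages. One of them needs a uniform-in-$\eps$ majorant $C(1+|x|)^{-N-2s}$ for $(-\Delta)^s_\eps\phi$. You obtain this correctly: the symmetrized Taylor bound covers bounded sets, and the pure tail formula covers points away from $K$.
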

We note that this lemma guarantees that the pointwise principle value definition of $(-\Delta)^sg$ in $\Omega$ is consistent with the definition of $(-\Delta)^s g$ as a distribution on $\R^N$ via
\begin{equation*}
  \label{eq:distr-definition}
    \int_{\R^N}[(-\Delta)^s g]  \phi\,dx :=   \int_{\R^N}g (-\Delta)^s \phi\,dx \qquad \text{for all $\phi \in C^\infty_c(\R^N)$.}
\end{equation*}

\begin{proof}
For $\phi \in C^\infty_c(\Omega)$ we have, with $K:=\supp \phi \subset \subset \Omega$, by the Fubini theorem and a change of variable, 
\begin{align*}
&\int_{\R^N}g (-\Delta)^s \phi\,dx = \frac{c_{N,s}}{2}\int_{\R^N}g(x)\int_{\R^N}\frac{2\phi(x)-\phi(x+z)-\phi(x-z)}{|z|^{N+2s}}dz dx\\
                                    &\frac{c_{N,s}}{2}\int_{\R^N}g(x)\int_{\R^N}\frac{2\phi(x)-\phi(x+z)-\phi(x-z)}{|z|^{N+2s}}dx dz\\
                                    & = \frac{c_{N,s}}{2}\int_{\R^N}\frac{1}{|z|^{N+2s}}\Bigl( \int_{\R^N} 2\phi(x)g(x)dx - \int_{\R^N}g(x)\phi(x+z)\bigr)dx -\int_{\R^N}\bigl(g(x) \phi(x-z)\bigr)dx\Bigr)dz\\ 
& = \frac{c_{N,s}}{2}\int_{\R^N}\frac{1}{|z|^{N+2s}}\Bigl( \int_{\R^N} 2\phi(x)g(x)dx - \int_{\R^N}g(x-z)\phi(x)\bigr)dx -\int_{\R^N}\bigl(g(x+z) \phi(x)\bigr)dx\Bigr)dz\\ 
& = \frac{c_{N,s}}{2}\int_{\R^N}\Bigl( \int_{K} \phi(x)\frac{2 g(x)-g(x+z)-g(x-z)}{|z|^{N+2s}}dx \Bigr)dz\\ 
& = \frac{c_{N,s}}{2}\int_{K} \phi(x) \int_{\R^N}\frac{2 g(x)-g(x+z)-g(x-z)}{|z|^{N+2s}}dz=\int_{\Omega} \phi (-\Delta)^sg\,dx.
\end{align*}
\end{proof}

With the help of Lemma~\ref{observation-pointwise}, we may now establish the following local version of the semigroup property for fractional Laplacians.
\begin{proposition}
\label{semigroup-general}
Let $0<t\le \frac{s}{2}<s<\frac{\alpha}{2}<1$, and let $g \in L^1_{t}(\R^N) \cap C^\alpha_{\loc}(\Omega)$ satisfy $(-\Delta)^{s-t} g \in L^1_{t}(\R^N)$. Then we have 
\begin{equation}
\label{semigroup-simple}
  (-\Delta)^s g = (-\Delta)^{t}(-\Delta)^{s-t} g \qquad \text{in $\Omega$.}
\end{equation}
\end{proposition}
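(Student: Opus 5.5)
The plan is to verify \eqref{semigroup-simple} in the distributional sense on $\Omega$ and then upgrade it to a pointwise identity using continuity of both sides in $\Omega$. Set $h:=(-\Delta)^{s-t}g$. Since $s-t<s<\frac{\alpha}{2}$ and $g \in C^\alpha_{\loc}(\Omega)\cap L^1_{s-t}(\R^N)$ (because $L^1_t\subset L^1_{s-t}$ for $t\le s-t$), $h$ is pointwise defined in $\Omega$ and belongs to $C^{\alpha-2(s-t)}_{\loc}(\Omega)$. Moreover $\alpha-2(s-t)>2t$ because $\alpha>2s$, so $(-\Delta)^t h$ is pointwise defined in $\Omega$ once we also use the assumption $h\in L^1_t(\R^N)$. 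By Lemma~\ref{observation-pointwise}, applied once with exponent $s$ to $g$ and once with exponent $t$ to $h$ (with Hölder exponent $\alpha-2(s-t)$ in place of $\alpha$), both sides of \eqref{semigroup-simple}, tested against $\phi\in C^\infty_c(\Omega)$, equal respectively $\int_{\R^N} g(-\Delta)^s\phi\,dx$ and $\int_{\R^N} h(-\Delta)^t\phi\,dx$. It therefore suffices to show
$$
\int_{\R^N} h\,(-\Delta)^t\phi\,dx=\int_{\R^N} g\,(-\Delta)^s\phi\,dx \qquad\text{for all $\phi\in C^\infty_c(\Omega)$.}
$$

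For this I would write $\psi:=(-\Delta)^t\phi$. This is a smooth function with decay $|\psi(x)|\le C(1+|x|)^{-N-2t}$, and on the Fourier side $(-\Delta)^{s-t}\psi=(-\Delta)^s\phi$ holds classically, since $|\xi|^{2(s-t)}|\xi|^{2t}=|\xi|^{2s}$ and $\phi$ is Schwartz. The key step is the duality identity $\int h\psi=\int g\,(-\Delta)^{s-t}\psi$, i.e., moving $(-\Delta)^{s-t}$ from $g$ onto the non-compactly supported test function $\psi$. Here $h$ is only defined as a distribution outside $\Omega$, with $h\in L^1_t$ by assumption, so I would argue by approximation. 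Take cutoffs $\eta_R\in C^\infty_c(B_{2R})$ with $\eta_R=1$ on $B_R$, and set $\psi_R:=\eta_R\psi$. For $\psi_R\in C^\infty_c(\R^N)$ the distributional definition gives $\int h\psi_R=\int g(-\Delta)^{s-t}\psi_R$. One then lets $R\to\infty$. On the left, dominated convergence applies using $h\in L^1_t$ and the decay of $\psi$. On the right one needs $(-\Delta)^{s-t}\psi_R\to(-\Delta)^{s-t}\psi$ with a uniform bound $C(1+|x|)^{-N-2(s-t)}$, and then uses $g\in L^1_{s-t}$. That bound follows from the standard splitting of the singular integral into near and far parts, using $\|\psi_R\|_{C^2}\le C$ uniformly and $|\psi_R|\le|\psi|$.

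The main obstacle is this last uniform tail estimate. In particular, $(-\Delta)^{s-t}(\psi-\psi_R)(x)$ for $|x|\le R/2$ is controlled only by $\int_{|y|>R}|\psi(y)||x-y|^{-N-2(s-t)}dy\lesssim R^{-2t-2(s-t)}$, while for large $|x|$ one must check that the bound decays like $|x|^{-N-2(s-t)}$ in a way that is integrable against $g$. This is the reason for the hypothesis $t\le s/2$, which gives enough room, and I would verify it explicitly. Once the distributional identity holds, both sides of \eqref{semigroup-simple} are continuous functions on $\Omega$ that agree as distributions, hence they agree pointwise in $\Omega$.
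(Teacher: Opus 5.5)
Your proposal is correct, and its skeleton coincides with the paper's. You test against $\phi\in C^\infty_c(\Omega)$, use Lemma~\ref{observation-pointwise} on both sides, and set $\psi=(-\Delta)^t\phi$ so that $(-\Delta)^{s-t}\psi=(-\Delta)^s\phi$. You then transfer $(-\Delta)^{s-t}$ from $g$ onto $\psi$ by approximating $\psi$ with compactly supported test functions.

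The real difference is in how that last limit is justified. The paper uses continuity of $(-\Delta)^{s-t}$ from the weighted space $\mathcal{S}^2_t$ into $L^\infty(\R^N,(1+|x|)^{N+2t}dx)$, together with norm-density of $C^\infty_c(\R^N)$ in $\mathcal{S}^2_t$. That density cannot hold for this $\psi$. Any norm-limit of compactly supported functions satisfies $|x|^{N+2t}\psi(x)\to 0$, whereas here $|x|^{N+2t}\psi(x)\to -c_{N,t}\int\phi\,dx$, which is nonzero in general.

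Your version, with $\psi_R=\eta_R\psi$, pointwise convergence, a uniform majorant and dominated convergence, needs only boundedness of the approximants. So it is the more robust route. The paper's cited mapping property even gives you a majorant $C(1+|x|)^{-N-2t}$, because $\sup_R\|\eta_R\psi\|_{\mathcal{S}^2_t}<\infty$, and this suffices since $g\in L^1_t$.

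One point needs fixing when you carry out the tail estimate. An unweighted bound $\|\psi_R\|_{C^2}\le C$ together with $|\psi_R|\le|\psi|$ does not give the needed decay. From these two facts alone, the near-field part is only bounded by $C|x|^{-(N+2t)(1-s+t)}$, which is too weak against a general $g\in L^1_t$. You need weighted derivative bounds, for example $|D^2\psi_R(y)|\le C(1+|y|)^{-N-2t-2}$ uniformly in $R$. These hold because $D^k\psi(y)=O(|y|^{-N-2t-k})$ and $D^k\eta_R=O(R^{-k})$ on $\{R\le|y|\le 2R\}$. With them, the near/far splitting gives exactly your bound $C(1+|x|)^{-N-2(s-t)}$, which is integrable against $g\in L^1_{s-t}$.
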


\begin{proof}
We start by noting that,  by the assumption $(-\Delta)^{s-t} g \in L^1_{t}(\R^N)$ and since $(-\Delta)^{s-t} g \in C^{\alpha-2(s-t)}_{\loc}(\Omega)$, both sides of (\ref{semigroup-simple})
are well-defined in pointwise sense in $\Omega$. It then suffices to show that
  \begin{equation}
    \label{eq:distr-sufficient}
 \int_{\Omega}[(-\Delta)^s g]  \phi\,dx =  \int_{\Omega} [(-\Delta)^{s-t}(-\Delta)^t g] \phi\,dx \qquad \text{for all $\phi \in C^\infty_c(\Omega)$.}  
  \end{equation}
  By (\ref{eq:consistency-definitions}) and since $L^1_{t}(\R^N) \subset L^1_{s}(\R^N)$, we have
  \begin{equation}
    \label{first-eq-chain-semigroup}
   \int_{\Omega}[(-\Delta)^s g]  \phi\,dx = \int_{\R^N} g (-\Delta)^s \phi\,dx = \int_{\R^N}g (-\Delta)^{s-t} \psi \,dx
  \end{equation}
  with $\psi: = (-\Delta)^{t}\phi  \in C^\infty(\R^N) \cap L^\infty(\R^N,(1+|x|)^{N+2t}dx).$
  Moreover, we note that for any multiindex $\alpha$ we have $\partial^\alpha \phi \in C^\infty_c(\Omega)$, and
  $$
  \partial^{\alpha} \psi = (-\Delta)^s[\partial^{\alpha}\phi] \in L^\infty(\R^N,(1+|x|)^{N+2t}dx).
  $$
  In particular, $\psi$ is contained in the space $\cS^2_{t}(\R^N)$ of $C^2$-functions $\R^N \to \R$ satisfying that 
  $\partial^{\alpha} \psi \in L^\infty(\R^N,(1+|x|)^{N+2t}dx)$ for $|\alpha| \le 2$. We endow this space with the norm
  $$
  \|\psi\|_{\cS^2_{t}}:= \sup_{|\alpha|\le 2}\|\partial^{\alpha} \psi\|_{L^\infty(\R^N,(1+|x|)^{N+2t}dx)} 
  $$
  Since $t \le \frac{s}{2}\le s-t$, $(-\Delta)^{s-t}$ defines a continuous linear operator
  $$
  \cS^2_{t}(\R^N)  \to  L^\infty(\R^N,(1+|x|)^{N+2t}dx),
  $$
  see e.g. \cite[Lemma 2.1]{FW16}. Using that $C^\infty_c(\R^N)$ is dense in the space $\cS^2_{t}(\R^N)$, we may choose a sequence $\psi_n \in C^\infty_c(\R^N)$ with $\|\psi_n- \phi\|_{\cS^2_{t}} \to 0$ as $n \to \infty$, which, since $g, (-\Delta)^{s-t}g \in L^1_t(\R^N)$ by assumption, implies that
  $$
  \int_{\R^N}g (-\Delta)^{s-t} \psi \,dx = \lim_{n \to \infty}\int_{\R^N}g (-\Delta)^{s-t} \psi_n \,dx= \lim_{n \to \infty}\int_{\R^N}[(-\Delta)^{s-t}g] \psi_n \,dx =   \int_{\R^N} [(-\Delta)^{s-t}g] \psi\,dx.
  $$
  Combining this with \eqref{first-eq-chain-semigroup} gives 
$$
   \int_{\Omega}[(-\Delta)^s g]  \phi\,dx= \int_{\R^N} [(-\Delta)^{s-t}g] \psi\,dx = \int_{\R^N} [(-\Delta)^{s-t}g] (-\Delta)^t \phi\,dx
   = \int_{\Omega} [(-\Delta)^t (-\Delta)^{s-t} g]  \phi\,dx,
 $$
 where we have used (\ref{eq:consistency-definitions}) and the fact that $(-\Delta)^{s-t}g \in L^1_s(\R^N) \cap C^{\alpha-2(s-t)}_{\loc}(\Omega)$ in the final step. Hence, (\ref{eq:distr-sufficient}) holds.
\end{proof} 

We now begin with the main part of the proof of Theorem~\ref{thm1.3}. For this, from now on let
\begin{equation}
  \label{eq:main-assumption-g-differentiability}
g\in L^{\infty}(\RR^N) \cap C^{\alpha}(U)\quad\text{for some $\alpha\in(0,1]$,}
\end{equation}
where $U$ is any open neighborhood of $\Omega$ as fixed in \eqref{omega beta}. As a consequence, for $s\in(0,\frac\alpha{2})$, $(-\Delta)^{s}g$ is defined as a distribution on $\R^N$ and pointwisely as a function in $C^{\alpha-2s}_{\loc}(U)$, cf. Lemma \ref{observation-pointwise}.
Throughout the proof of Theorem~\ref{thm1.3}, we will need to consider different cases distinguished by properties of \begin{equation}
  \label{eq:special-caseb}
f_s:=(-\Delta)^sg.
\end{equation}
As a rather direct consequence of Lemma~\ref{extension-CW-19} and Proposition~\ref{semigroup-general} we have the following.   

\begin{lemma}
\label{diff-express-f-s}
Suppose that 
\begin{equation}
 \label{eq:special-case}
f_s = (-\Delta)^sg \in L^1_{0}(\R^N) 
\end{equation}
for some $s \in (0,\frac{\alpha}{2})$. Then we have 
  \begin{equation*}
    \label{diff-f-s}
  \lim_{\sigma \to 0^+} \Bigl\|\frac{f_{s+\sigma}-f_s}{\sigma}-\loglap f_s\Bigr\|_{L^\infty(\Omega)} = 0.
  \end{equation*}
\end{lemma}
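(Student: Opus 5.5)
The plan is to write $f_{s+\sigma}$ as a small-order fractional Laplacian of $f_s$ and then apply Lemma~\ref{extension-CW-19} to $f := f_s$. Concretely, for $\sigma>0$ small (say $\sigma \le \frac{s+\sigma}{2}$, i.e.\ $\sigma\le s$, and $s+\sigma<\frac{\alpha}{2}$), apply Proposition~\ref{semigroup-general} with exponent $s+\sigma$ in place of $s$ and with $t=\sigma$. This gives
\[
f_{s+\sigma}=(-\Delta)^{s+\sigma}g=(-\Delta)^{\sigma}(-\Delta)^{s}g=(-\Delta)^\sigma f_s \qquad\text{in }\Omega.
\]
Consequently
\[
\frac{f_{s+\sigma}-f_s}{\sigma}-\loglap f_s=\frac{(-\Delta)^\sigma f_s-f_s}{\sigma}-\loglap f_s \qquad \text{in $\Omega$},
\]
and Lemma~\ref{extension-CW-19} yields the claim once its hypotheses are verified for $f=f_s$.

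The checks, in order, are as follows.

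First, $g\in L^1_\sigma(\R^N)$, because $g\in L^\infty(\R^N)$ and $\sigma>0$.

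Second, $(-\Delta)^{s}g=f_s\in L^1_\sigma(\R^N)$. This follows from assumption \eqref{eq:special-case}, since $L^1_0(\R^N)\subset L^1_\sigma(\R^N)$.

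Third, $g\in C^\alpha_{\loc}$ on a neighborhood of $\overline\Omega$, because $g\in C^\alpha(U)$.

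Fourth, $f_s\in C^\gamma(\tilde U)$ for some $\gamma>0$ on an open neighborhood $\tilde U$ of $\overline\Omega$. Since $g$ is bounded and $C^\alpha$ on $U$ with $2s<\alpha$, standard estimates give $f_s\in C^{\alpha-2s}$ on any $\tilde U$ with $\overline\Omega\subset\tilde U\subset\subset U$, as already noted after \eqref{eq:main-assumption-g-differentiability}. So $\gamma=\alpha-2s>0$ works.

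One technical point is that Proposition~\ref{semigroup-general} is stated in $\Omega$ with $g\in C^\alpha_{\loc}(\Omega)$. Applying it as stated gives the identity pointwise in $\Omega$, which is all we need, since the norm in question is $L^\infty(\Omega)$. Here $(-\Delta)^\sigma f_s$ is evaluated pointwise in $\Omega$, which is legitimate because $f_s$ is Hölder near $\overline\Omega$ and lies in $L^1_\sigma$.

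The main obstacle is keeping the constraint $t\le \frac{s'}{2}$ of the proposition, with $s'=s+\sigma$. This holds for all $\sigma\in(0,s]$, and since only the limit $\sigma\to0^+$ matters, this restriction costs nothing. We likewise restrict to $\sigma<\frac{\alpha}{2}-s$. I also need to make sure the uniform Hölder bound used in Lemma~\ref{extension-CW-19} refers only to $f_s$ and not to $f_{s+\sigma}$, which it does. With these points settled, the limit follows directly from \eqref{diff-f-L-1-0}.
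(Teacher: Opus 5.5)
Your proposal is correct and follows the paper's proof. Proposition~\ref{semigroup-general} with exponent $s+\sigma$ and $t=\sigma$ gives $f_{s+\sigma}=(-\Delta)^\sigma f_s$ in $\Omega$, and Lemma~\ref{extension-CW-19}, applied to $f_s\in L^1_0(\R^N)$, which is $C^{\alpha-2s}$ near $\overline\Omega$, then finishes the argument; your explicit restriction $\sigma\le s$ (needed for $t\le\frac{s+\sigma}{2}$) and your H\"older regularity on a neighborhood of $\overline\Omega$ are details the paper leaves implicit.
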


\begin{proof}
The assumption~(\ref{eq:special-case}) implies that $f_s \in L^1_{\sigma}(\R^N)$ for every $\sigma>0$, and therefore Proposition~\ref{semigroup-general} gives 
$$
  \frac{f_{s+\sigma}-f_s}{\sigma} = \frac{(-\Delta)^{\sigma} - \id}{\sigma}f_s \qquad \text{for $\sigma \in (0,\frac{\alpha}{2}-s)$.}
$$
Since $f_s \in C^{\alpha-2s}(\Omega) \cap L^1_0(\R^N)$, the claim now follows from Lemma~\ref{extension-CW-19}. 
\end{proof}

Next, we add the following complement of Lemma~\ref{diff-express-f-s} under a different additional assumption on $g$. 

\begin{lemma}
\label{diff-express-f-s-disjoint-support}
Suppose that
\begin{equation}
  \label{eq:special-case-3}
\text{$g \equiv 0$ in a neighborhood $U$ of $\Omega$,}   
\end{equation}
and let $f_s$ be given by (\ref{eq:special-caseb}) for $s \in (0,\frac{\alpha}{2})$. Then we have 
  \begin{equation*}
    \label{diff-f-s-general}
  \lim_{\sigma \to 0^+} \Bigl\|\frac{f_{s+\sigma}-f_s}{\sigma}- (-\Delta)^{s+\mathrm{Log}} g\Bigr\|_{L^\infty(\Omega)} = 0 \qquad \text{for $s \in (0,\frac{\alpha}{2})$}  
  \end{equation*}
with the fractional-logarithmic operator $(-\Delta)^{s+\mathrm{Log}}$ defined in (\ref{fractional-logarithmic-def}). Moreover, the map 
\begin{equation}
  \label{eq:add-continuity-map}
(0,\frac{\alpha}{2}) \to L^\infty(\Omega), \qquad s \mapsto (-\Delta)^{s+\mathrm{Log}} g  
\end{equation}
is continuous.
\end{lemma}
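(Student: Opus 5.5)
Since $g\equiv 0$ in $U\supset\overline\Omega$, for $x\in\Omega$ the principal value disappears and both $f_s(x)$ and $(-\Delta)^{s+\mathrm{Log}}g(x)$ become absolutely convergent integrals over $\R^N\setminus U$. With $\tau:=\mathrm{dist}(\Omega,\R^N\setminus U)>0$ we have
$$
f_s(x) = -c_{N,s}\int_{\R^N\setminus U}\frac{g(y)}{|x-y|^{N+2s}}\,dy,
$$
$$
(-\Delta)^{s+\mathrm{Log}}g(x)= -\int_{\R^N\setminus U} g(y)\,\frac{d}{ds}\Bigl[c_{N,s}|x-y|^{-N-2s}\Bigr]\,dy .
$$
The second identity holds because $\frac{d}{ds}\bigl[c_{N,s}|z|^{-N-2s}\bigr] = b_{N,s}|z|^{-N-2s} + c_{N,s}|z|^{-N-2s}(-2\ln|z|)$, which matches the two terms in \eqref{fractional-logarithmic-def} once the zero values $g(x)=0$ are inserted. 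The plan is therefore to differentiate the kernel $k_s(z):=c_{N,s}|z|^{-N-2s}$ in $s$ under the integral sign, uniformly for $|z|\ge\tau$.

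\textbf{Key steps.} Fix $s\in(0,\frac\alpha2)$ and $\sigma\in(0,\sigma_0)$ with $s+\sigma_0<\frac\alpha2$. By the mean value theorem,
$$
\frac{f_{s+\sigma}(x)-f_s(x)}{\sigma}-(-\Delta)^{s+\mathrm{Log}}g(x) = -\int_{\R^N\setminus U} g(y)\bigl[\partial_s k_{\theta}(x-y)-\partial_s k_s(x-y)\bigr]\,dy
$$
for some $\theta=\theta(x,y,\sigma)\in(s,s+\sigma)$. The second-derivative bound
$$
|\partial^2_s k_t(z)|\le C\,|z|^{-N-2t}\bigl(1+|\ln|z||\bigr)^2
$$
holds for $t$ in a compact subinterval of $(0,1)$, since $c_{N,t}$ is smooth and $t\mapsto c_{N,t}$, $b_{N,t}$, $\partial_tb_{N,t}$ are bounded there. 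This gives the pointwise bound $\sigma C|x-y|^{-N-2s}(1+|\ln|x-y||)^2$ on the integrand difference. Because $|x-y|\ge\tau$ and $g\in L^\infty$, the function $|x-y|^{-N-2s}(1+|\ln|x-y||)^2$ is integrable over $\R^N\setminus U$ uniformly in $x\in\Omega$. Here we use $s>0$: the $2s$ decay beats the logarithm at infinity. Hence the $L^\infty(\Omega)$ norm of the difference quotient error is $O(\sigma)\|g\|_\infty$, which proves the first claim.

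For the continuity of \eqref{eq:add-continuity-map}, we apply the same estimate to $\partial_s k_t-\partial_s k_s$ with $t\to s$, which gives $\|(-\Delta)^{t+\mathrm{Log}}g-(-\Delta)^{s+\mathrm{Log}}g\|_{L^\infty(\Omega)}\le C|t-s|$, locally uniformly.

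\textbf{Main obstacle.} The only delicate point is uniformity of the tail bound as $t$ varies near $s$. I would restrict to $t\in[s/2,\frac\alpha2)$, say, so that $\int_{|z|\ge\tau}|z|^{-N-2t}(1+|\ln|z||)^2\,dz\le C(s,\tau)$ uniformly. One must also check that $f_s$ genuinely equals the integral formula, that is, that the pointwise and distributional definitions agree. This is Lemma~\ref{observation-pointwise}, which applies since $g\in L^\infty\subset L^1_s(\R^N)$ and $g$ is smooth (zero) on $U$.
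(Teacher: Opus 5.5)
Your proof is correct. It starts from the same reduction as the paper. Because $g$ vanishes on $U\supset\overline\Omega$, both $f_s(x)$ and $(-\Delta)^{s+\mathrm{Log}}g(x)$ are absolutely convergent integrals of $g$ over $\R^N\setminus U$ against explicit kernels, with $|x-y|\ge\tau$. Everything then reduces to showing $\sup_{x\in\Omega}\int_{\R^N\setminus U}|\ell_\sigma(x,y)|\,dy\to0$ for the kernel error $\ell_\sigma(x,y)=\sigma^{-1}(k_{s+\sigma}-k_s)(x-y)-\partial_sk_s(x-y)$.

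The difference is in how you control $\ell_\sigma$. The paper splits $\ell_\sigma$ into a part from the variation of $c_{N,s}$ and a part from the variation of the exponent. It also splits the domain into two regions:
\begin{itemize}
\item the tail $\R^N\setminus B_R$, where both parts are bounded uniformly in $\sigma$ via the estimate $\bigl||z|^{-2\sigma}-1\bigr|\le\frac{4\sigma}{s}|z|^{s}$ from [JSW20, Lemma 2.1], and made small by taking $R$ large;
\item the bounded region $B_R\setminus U$, where $\tau\le|x-y|\le 2R$ gives uniform convergence $\ell_\sigma\to0$.
\end{itemize}
An $\varepsilon$--$R$ argument finishes.

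You instead use the mean value theorem in $s$ to get $|\ell_\sigma|\le\sigma\sup_{t\in[s,s+\sigma]}|\partial_t^2k_t(z)|\le C\sigma|z|^{-N-2s}(1+|\ln|z||)^2$ on $\{|z|\ge\tau\}$. Here $C$ must absorb a factor $\max\{1,\tau^{-2\sigma_0}\}$ when $\tau<1$, which you left implicit. This is a single integrable majorant, with no splitting, and it gives strictly more than the paper's argument:
\begin{itemize}
\item an explicit rate $O(\sigma)\|g\|_{L^\infty}$;
\item applied to $\partial_tk_t-\partial_sk_s$, a local Lipschitz bound for $s\mapsto(-\Delta)^{s+\mathrm{Log}}g$ in $L^\infty(\Omega)$. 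This proves the continuity claim, which the paper only asserts ``by very similar estimates''.
\end{itemize}
The paper's route needs only first-order information on the kernel, and it reuses the $\varepsilon$--$R$ scheme of Lemma~\ref{extension-CW-19}.

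One bookkeeping remark. Your identity $(-\Delta)^{s+\mathrm{Log}}g(x)=-\int g(y)\,\partial_sk_s(x-y)\,dy$ requires reading the term $b_{N,s}(-\Delta)^su$ in \eqref{fractional-logarithmic-def} as $b_{N,s}$ times the principal value integral without the factor $c_{N,s}$. That is the reading under which $(-\Delta)^{s+\mathrm{Log}}=\frac{d}{ds}(-\Delta)^s$ holds, and the paper's own proof uses it too. The paper's displayed $\ell_\sigma$ and $\ell^2_\sigma$ also drop the factor $2$ in front of the logarithm. Your formula $\partial_sk_s(z)=(b_{N,s}-2c_{N,s}\ln|z|)|z|^{-N-2s}$ has the factor correctly.
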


\begin{proof}
A direct computation gives
  $$
  \Bigl[\frac{f_{s+\sigma}-f_s}{\sigma}- (-\Delta)^{s+\mathrm{Log}} g \Bigr](x)= \int_{\R^N \setminus U}\ell_{\sigma}(x,y)g(y)\,dy\qquad \text{for $x \in \Omega$}
  $$
  with
  \begin{align*}
    \ell_{\sigma}(x,y) &= \frac{1}{\sigma}\Bigl(\frac{c_{N,s+\sigma}}{|x-y|^{N+2s+2\sigma}}-\frac{c_{N,s}}{|x-y|^{N+2s}}\Bigr)-  \frac{1}{|x-y|^{N+2s}}\Bigl(b_{N,s}- c_{N,s}\ln |x-y|\Bigr)\\
  &= |x-y|^{-N-2s}\Bigl[\frac{1}{\sigma}\Bigl(\frac{c_{N,s+\sigma}}{|x-y|^{2\sigma}}-c_{N,s}\Bigr)- \Bigl(b_{N,s}- c_{N,s}\ln |x-y|\Bigr)\Bigr]\\ 
    &= |x-y|^{-N-2s}\Bigl[\ell_\sigma^1(x,y)+ \ell_\sigma^2(x,y)\Bigr]
  \end{align*}
  with
  $$
  \ell_\sigma^1(x,y) = \frac{c_{N,s+\sigma}-c_{N,s}}{\sigma}|x-y|^{-2\sigma}- b_{N,s}
  $$
  and
  $$
  \ell_\sigma^2(x,y) = c_{N,s} \biggl[\frac{1}{\sigma}\Bigl(\frac{1}{|x-y|^{2\sigma}}-1\Bigr)+\ln |x-y|\biggr].
  $$
  Consequently,
  \begin{equation}
    \label{cks-g-l-infty-est}
  \Bigl|\frac{f_{s+\sigma}-f_s}{\sigma}- (-\Delta)^{s+\mathrm{Log}}g\Bigr|(x) \le \|g\|_{L^\infty(\R^N \setminus N)} \int_{\R^N \setminus U}|\ell_{\sigma}(x,y)|\,dy.
  \end{equation}
  Next, we let $\eps>0$, and we choose $R>1$ sufficiently large with $\Omega \subset U \subset B_{R/2}$ and
  $$
  \int_{\R^N \setminus B_R}|y|^{-N-\frac{s}{2}}\,dy < \eps.
  $$
  Then we have $|x-y| \ge |y|-|x| \ge \frac{|y|}{2}$ for $x \in \Omega$, $y \in \R^N \setminus B_R$ and therefore    
  \begin{align*}
    \int_{\R^N \setminus B_R}|x-y|^{-N-2s}|\ell^1_{\sigma}(x,y)|\,dy &\le \Bigl(\Bigl|\frac{c_{N,s+\sigma}-c_{N,s}}{\sigma}\Bigr|+|b_{N,s}|\Bigr) \int_{\R^N \setminus B_R}|x-y|^{-N-2s}\,dy\\
                                                                     &\le \Bigl(\Bigl|\frac{c_{N,s+\sigma}-c_{N,s}}{\sigma}\Bigr|+|b_{N,s}|\Bigr)2^{N+2s} \int_{\R^N \setminus B_R}|y|^{-N-2s}\,dy\\
        &\le \eps \Bigl(\Bigl|\frac{c_{N,s+\sigma}-c_{N,s}}{\sigma}\Bigr|+|b_{N,s}|\Bigr)2^{N+2s} \le \kappa_1(N,s)\eps
  \end{align*}
  for all $\sigma \in [0,1-s]$, $x \in \Omega$ with a constant $\kappa_1(N,s)>0$. Moreover, using the estimate  (see \cite[Lemma 2.1]{JSW20})
  $$
  \Bigl|\frac{1}{|x-y|^{2\sigma}}-1\Bigr| \le \frac{2 \sigma}{s}\Bigl(|x-y|^{-2\sigma-s}+ |x-y|^{s}\Bigr) \le \frac{4 \sigma}{s}|x-y|^{s} \quad \text{for $x \in \Omega$, $y \in \R^N \setminus B_R$,}  
  $$
we find that
  \begin{align*}
    &\int_{\R^N \setminus B_R}|x-y|^{-N-2s}|\ell^2_{\sigma}(x,y)|\,dy \le \frac{4c_{N,s}}{s}\int_{\R^N \setminus B_R}|x-y|^{-N-s}(1+\ln |x-y|)dy\\
    &\le \frac{4c_{N,s}2^{N+s}}{s}\int_{\R^N \setminus B_R}|y|^{-N-s}(1+ \ln (R+|y|))dy\le \kappa_2(N,s)
\int_{\R^N \setminus B_R}|y|^{-N-\frac{s}{2}}dy \le \kappa_2(N,s) \eps
  \end{align*}
  for all $x \in \Omega$  with a constant $\kappa_2(N,s)>0$.
  Consequently,
  \begin{equation}
    \label{eq:l-sigma-outer-ball-est}
    \int_{\R^N \setminus B_R}|\ell_{\sigma}(x,y)|\,dy \le (\kappa_1(N,s) + \kappa_2(N,s)) \eps \qquad \text{for all $\sigma \in [0,1-s]$, $x \in \Omega$.}
  \end{equation}
  In addition, since with $\tau:=\text{dist} (\Omega, \R^N \setminus U)>0$ we have
  $$
  \tau \le |x-y| \le 2 R \qquad \text{for $x \in \Omega$, $y \in B_R \setminus U$,}
  $$
  we have
$$
\ell_\sigma^1(x,y) \to 0 \quad \text{and}\quad \ell_\sigma^{2}(x,y) \to 0 \qquad \text{as $\sigma \to 0^+$ uniformly in $x \in \Omega$, $y \in B_R \setminus U$,}
$$
which implies that
  \begin{equation}
    \label{eq:l-sigma-inner-ball-est}
    \int_{B_R \setminus U}|\ell_{\sigma}(x,y)|\,dy  \to 0 \qquad \text{as $\sigma \to 0^+$ uniformly in $x \in \Omega$.}
  \end{equation}
  Combining (\ref{eq:l-sigma-outer-ball-est}) and (\ref{eq:l-sigma-inner-ball-est}) yields
$$ 
   \limsup_{\sigma \to 0^+}\sup_{x \in \Omega} \int_{\R^N \setminus N}|\ell_{\sigma}(x,y)|\,dy \le (\kappa_1(N,s) + \kappa_2(N,s)) \eps
   \qquad \text{for all $\eps>0$}
$$
and therefore
  \begin{equation}
   \label{eq:l-sigma-combined-lim}
   \lim_{\sigma \to 0^+}\sup_{x \in \Omega} \int_{\R^N \setminus U}|\ell_{\sigma}(x,y)|\,dy =0.
\end{equation}
The claim \ref{diff-f-s-general} now follows by combining (\ref{cks-g-l-infty-est}) and (\ref{eq:l-sigma-combined-lim}).\\
The continuity of the map in~(\ref{eq:add-continuity-map}) follows by very similar estimates, so we skip the details.
    \end{proof}

Next, let $u_s$ denote the unique solution of (\ref{main eq}) for $s \in (0,\frac{\alpha}{2})$. To study the differentiability of the maps $s \mapsto u_s$, we can equivalently consider the function 
\begin{equation}
\label{v-s-u-s}  
    v_s:=g-u_s \in L^\infty(\R^N)
\end{equation}
for $s \in (0,\frac{\alpha}{2})$, which is the unique solution of the problem 
\begin{align*}
 (-\Delta)^s v_s=f_s\,\,\, &\text{in}\,\,\, \Omega,\\
    v_s=0\,\,\, &\text{in}\,\,\, \RR^N\setminus \Omega.    
\end{align*}
Hence, we have $v_s = \cGs_s f_s$ with the Green operator $\cGs$ defined in (\ref{eq:G-op-definition}), and we can apply the results from \cite{JSW25}
to the map $s \mapsto v_s$ in the following.

\begin{proposition}\label{proof diff}
Suppose that (\ref{eq:special-case}) holds for some $s \in (0,\frac{\alpha}{2})$. Then we have
    \begin{equation}\label{claim}
        \lim_{\sigma \to 0^+} \frac{\cGs_{s+\sigma}f_{s+\sigma}-\cGs_{s} f_s }{\sigma} = \cGs_s (w_s- \cPs^{\! \! c} f_s)\qquad \text{in $L^\infty(\Omega)$}
    \end{equation}
with    
\begin{equation}
  \label{eq:w-s-simple}
w_s = L_{\Delta}(1_{\R^N \setminus \Omega}f_s) \qquad \text{in $\Omega$.}  
\end{equation}
\end{proposition}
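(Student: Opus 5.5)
Our plan is to split the difference quotient into a part where only the right-hand side varies and a part where only the order of the Green operator varies:
\[
\frac{\cGs_{s+\sigma}f_{s+\sigma}-\cGs_{s} f_s }{\sigma}
= \cGs_{s+\sigma}\Bigl(\frac{f_{s+\sigma}-f_s}{\sigma}\Bigr) + \frac{\cGs_{s+\sigma}-\cGs_s}{\sigma} f_s .
\]
We treat the two terms separately.

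\textbf{First term.} By Lemma~\ref{diff-express-f-s}, we have $\frac{f_{s+\sigma}-f_s}{\sigma}\to L_\Delta f_s$ in $L^\infty(\Omega)$. Note that $L_\Delta f_s$ is bounded on $\Omega$, since $f_s\in C^{\alpha-2s}(U)\cap L^1_0(\R^N)$. Lemma~\ref{lemma:continuity-smooth}, applied to the sequence $s_n=s+\sigma_n$ and to the restrictions of these functions to $\Omega$, then gives
\[
\cGs_{s+\sigma}\Bigl(\frac{f_{s+\sigma}-f_s}{\sigma}\Bigr)\to \cGs_s\bigl(1_\Omega L_\Delta f_s\bigr) \qquad \text{uniformly in } \Omega .
\]

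\textbf{Second term.} Here we use the differentiability result of \cite{JSW25} for the map $s\mapsto \cGs_s h$ with a fixed $h\in C^\beta(\overline\Omega)$ (restricted to $\Omega$); this is the analogue of \cite[Theorem 1.1]{JSW20} at positive $s$. We expect it to give
\[
\frac{\cGs_{s+\sigma}-\cGs_s}{\sigma}h \to -\cGs_s\bigl(L_\Delta(1_\Omega h)+\cPs^{\!\!c} h\bigr) \qquad \text{in } L^\infty(\Omega).
\]
Formally this follows by differentiating the identity $(-\Delta)^{s}\cGs_s h=h$ in $\Omega$ with the help of \eqref{eq:pointwise-identity-fractional-logarithmic}. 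The complementary Poisson kernel term $\cPs^{\!\!c}$ comes from the exterior part of $L_\Delta$ acting on the function $\cGs_s h$, which vanishes outside $\Omega$ but not at the boundary scale. We apply this with $h=f_s|_\Omega$; the required interior Hölder regularity of $f_s$ follows from Lemma~\ref{observation-pointwise}.

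\textbf{Combination.} Adding the two limits, we obtain
\[
\cGs_s\Bigl(L_\Delta f_s - L_\Delta(1_\Omega f_s)-\cPs^{\!\!c} f_s\Bigr)=\cGs_s\bigl(w_s-\cPs^{\!\!c} f_s\bigr),
\]
because, by linearity, $L_\Delta f_s - L_\Delta(1_\Omega f_s)=L_\Delta(1_{\R^N\setminus\Omega} f_s)$ in $\Omega$. This linearity step is legitimate since $f_s\in L^1_0(\R^N)$, so both pieces are defined pointwise in $\Omega$: the piece $1_{\R^N\setminus\Omega}f_s$ via the second integral in \eqref{alt_integral rep_log}, and the piece $1_\Omega f_s$ via Dini continuity inside $\Omega$.

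\textbf{Main obstacle.} We expect the main difficulty to be making the \cite{JSW25} derivative formula applicable to $h=f_s$. That result likely needs $h\in C^\beta(\overline\Omega)$, while $f_s$ is only $C^{\alpha-2s}$ on $U\supset\overline\Omega$. This suffices, because $\overline\Omega\subset U$ gives $f_s\in C^{\alpha-2s}(\overline\Omega)$. A further point is that the convergence must hold in $L^\infty(\Omega)$ and not only almost uniformly. This should follow because $\cGs_s$ maps functions that blow up only logarithmically at $\partial\Omega$ (as $1_\Omega L_\Delta f_s$ may, through $h_\Omega$) into $C^{s-\epsilon}_0(\Omega)$. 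We would therefore use the $\delta_\Omega^{-\epsilon}$-weighted version of Lemma~\ref{lemma:continuity-smooth}, and we would first check that the statement of \cite{JSW25} gives the uniform bound.
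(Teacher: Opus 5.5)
Your proposal is correct and follows the paper's proof essentially step by step. It uses the same splitting into $\cGs_{s+\sigma}\bigl(\frac{f_{s+\sigma}-f_s}{\sigma}\bigr)$, handled by Lemma~\ref{diff-express-f-s} and Lemma~\ref{lemma:continuity-smooth}, and $\frac{\cGs_{s+\sigma}-\cGs_s}{\sigma}f_s$, handled by \cite[Theorem 1.2]{JSW25} with the correction term $-L_\Delta(1_\Omega f_s)-\cPs^{\!\!c} f_s$, and then recombines via $L_\Delta f_s-L_\Delta(1_\Omega f_s)=L_\Delta(1_{\R^N\setminus\Omega}f_s)$ in $\Omega$. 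The paper applies \cite{JSW25} using only $f_s\in L^\infty(\Omega)$, so your extra H\"older-regularity and weighted-uniformity checks are unnecessary; also, it is $L_\Delta(1_\Omega f_s)$, not $1_\Omega L_\Delta f_s$, that may blow up logarithmically at $\partial\Omega$.
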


\begin{proof}
To prove (\ref{claim}), we note that 
\begin{align*}
&\Bigl\| \frac{\cGs_{s+\sigma}f_{s+\sigma}-\cGs_{s}f_s}{\sigma}- \cGs_s (w_s- \cPs^{\! \! c} f_s) \|_{L^\infty(\Omega)}= \Bigl\| \frac{\cGs_{s+\sigma}f_{s+\sigma}-\cGs_{s}f_s}{\sigma}- \cGs_{s} (L_{\Delta} f_s+ \cDs f_s) \|_{L^\infty(\Omega)}\\ \\
&\leq \Bigl\| \frac{(\cGs_{s+\sigma}-\cGs_{s})f_s}{\sigma} - \cGs_{s} \cDs_sf_s\Bigr\|_{L^\infty(\Omega)}+\Bigl\|\cGs_{s+\sigma}\left(\frac{f_{s+ \sigma} -f_s }{\sigma}\right)-\cGs_s L_{\Delta} f_s  \Bigl\|_{L^{\infty}(\Omega)}:=\mathrm I_s +\mathrm {II}_s
    \end{align*}
with 
    \begin{align*}
     \cDs_s f_s = - L_{\Delta} 1_{\Omega}f_s - \cPs^{\! \! c} f_s.    
    \end{align*}
Since 
\begin{align*}
\frac{f_{s+ \sigma} -f_s }{\sigma}  \to L_{\Delta}f_s\qquad \text{as $\sigma \to 0^+$ uniformly in $\Omega$}
\end{align*}
by Lemma~\ref{diff-express-f-s}, it follows from \cref{lemma:continuity-smooth} that 
\begin{align}\label{EQ2.3}
\mathrm{II}_s=\Bigl\|\cGs_{s+\sigma}\left(\frac{f_{s+ \sigma} -f_s }{\sigma}\right)-\cGs_s L_{\Delta}f_s \Bigr\|_{L^{\infty}(\Omega)} \to 0\,\,\, \text{as }\,\,\, \sigma \to 0.
\end{align}
On the other hand, by \cite[Theorem 1.2]{JSW25} and since $f_s\in L^\infty(\Omega)$ by the assumptions on $g$, we have
\begin{align}\label{EQ2.4}
\mathrm{I}_s=\Bigl\|\frac{(\cGs_{s+\sigma}-\cGs_{s})f_s}{\sigma} -\cDs_s f_s\Bigr\|_{L^{\infty}(\Omega)} \to 0 \,\,\, \text{as}\,\,\, \sigma \to 0.    
\end{align}
Combining \eqref{EQ2.3}-\eqref{EQ2.4}, we conclude \eqref{claim}, as claimed.
\end{proof}

\begin{remark}
  \label{rem-special-case-2}
In the special case where   
\begin{equation}
  \label{eq:special-case-2}
g \in C^\alpha_c(\R^N),
\end{equation}
we have
\begin{equation*}
  \label{remainder-C-alpha-minus}
f_s = (-\Delta)^s g \in  C^{\alpha-2s}(\R^N) \cap L^1_0(\R^N)\quad \text{for $s \in (0,\frac{\alpha}{2})$,}  
\end{equation*}
so (\ref{eq:special-case}) holds for all $s \in (0,\frac{\alpha}{2})$. Moreover,  using (\ref{eq:pointwise-identity-fractional-logarithmic}), we find that the function $w_s$ in (\ref{eq:w-s-simple}) can be written as
$$
w_s = L_\Delta \bigl(1_{\R^N \setminus \Omega} f_s \bigr) = L_\Delta f_s -  L_\Delta\bigl(1_{\Omega} f_s \bigr)=(-\Delta)^{s+\mathrm{Log}}g-  L_\Delta\bigl(1_{\Omega} f_s \bigr)\qquad \text{in $\Omega$}.
$$
In the next proposition, we show that the same holds under assumption \eqref{eq:special-case-3} for $g$.
\end{remark}

\begin{proposition}\label{proof diff-case-3}
  Assume $g$ is as in \eqref{eq:special-case-3} and $f_s$ as in \eqref{eq:special-caseb}. Then for $s \in (0,\frac{\alpha}{2})$ we have
    \begin{equation}\label{thm1:eq2-section-case-3}
        \lim_{\sigma \to 0^+} \frac{\cGs_{s+\sigma}f_{s+\sigma}-\cGs_{s} f_s }{\sigma} = \cGs_s (w_s- \cPs^{\! \! c} f_s)\qquad \text{in $L^\infty(\Omega)$}
    \end{equation}
with    
\begin{equation*}
  \label{eq:w-s-simple-case-3}
w_s = (-\Delta)^{s+\mathrm{Log}}g - L_{\Delta}\bigl(1_{\Omega}f_s\bigr) \qquad \text{in $\Omega$.}  
\end{equation*}
\end{proposition}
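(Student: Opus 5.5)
We follow the same decomposition as in the proof of Proposition~\ref{proof diff}, with Lemma~\ref{diff-express-f-s} replaced by Lemma~\ref{diff-express-f-s-disjoint-support}. Since $g\equiv 0$ in the neighborhood $U$ of $\Omega$, for $x\in\Omega$ the function
$$
f_s(x)=-c_{N,s}\int_{\R^N\setminus U}\frac{g(y)}{|x-y|^{N+2s}}\,dy
$$
is smooth and bounded on $\overline{\Omega}$, so $f_s\in L^\infty(\Omega)$ for every $s\in(0,\frac{\alpha}{2})$. Moreover, for $x\in\Omega$ we have $(-\Delta)^{s+\mathrm{Log}}g(x)=-c_{N,s}\int_{\R^N\setminus U}\frac{g(y)(-2\ln|x-y|)}{|x-y|^{N+2s}}dy+b_{N,s}(-\Delta)^sg(x)$, which is also bounded on $\Omega$. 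Hence $w_s\in L^\infty(\Omega)$, provided $L_\Delta(1_\Omega f_s)$ is bounded on $\Omega$. To see this, note that $1_\Omega f_s$ is compactly supported and Lipschitz inside $\Omega$, so \eqref{alt_integral rep_log} gives $|L_\Delta(1_\Omega f_s)|\le C(1+|\ln\delta_\Omega|)$. This is only bounded in a weighted sense, but the Green operator $\cGs_s$ still maps it into $L^\infty(\Omega)$ (possibly after using the $\delta_\Omega^{-\eps}$ integrability and Lemma~\ref{int:la}).

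As in Proposition~\ref{proof diff}, we write
$$
\frac{\cGs_{s+\sigma}f_{s+\sigma}-\cGs_sf_s}{\sigma}-\cGs_s(w_s-\cPs^{\!\!c}f_s)=\Bigl[\frac{(\cGs_{s+\sigma}-\cGs_s)f_s}{\sigma}-\cGs_s\cDs_sf_s\Bigr]+\Bigl[\cGs_{s+\sigma}\frac{f_{s+\sigma}-f_s}{\sigma}-\cGs_s(-\Delta)^{s+\mathrm{Log}}g\Bigr],
$$
with $\cDs_sf_s=-L_\Delta(1_\Omega f_s)-\cPs^{\!\!c}f_s$. This algebraic identity holds precisely because $w_s-\cPs^{\!\!c}f_s=(-\Delta)^{s+\mathrm{Log}}g+\cDs_sf_s$.

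The first bracket tends to $0$ in $L^\infty(\Omega)$ by \cite[Theorem 1.2]{JSW25}, since $f_s\in L^\infty(\Omega)$; this step is identical to \eqref{EQ2.4}.

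For the second bracket, Lemma~\ref{diff-express-f-s-disjoint-support} gives $\frac{f_{s+\sigma}-f_s}{\sigma}\to(-\Delta)^{s+\mathrm{Log}}g$ in $L^\infty(\Omega)$. Since the limit function is bounded, Lemma~\ref{lemma:continuity-smooth} applies to the sequence $f_n:=\frac{f_{s+\sigma_n}-f_s}{\sigma_n}$ with $s_n=s+\sigma_n\to s$. It yields convergence in $C^{s-2\delta}_0(\Omega)$, and hence in $L^\infty(\Omega)$, to $\cGs_s(-\Delta)^{s+\mathrm{Log}}g$. Since this holds for every sequence $\sigma_n\to0^+$, it holds for the full limit.

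The main obstacle is making sure the hypotheses of the cited results fit exactly. First, we must check that \cite[Theorem 1.2]{JSW25} indeed produces $\cDs_s$ in the form $-L_\Delta(1_\Omega\cdot)-\cPs^{\!\!c}$ for merely bounded $f_s$. Second, we must confirm that $f_s$ is given pointwise by the displayed integral, consistently with the distributional definition; this follows from Lemma~\ref{observation-pointwise}, since $g\in C^\alpha$ near $\Omega$ trivially. Everything else reduces to Lemma~\ref{diff-express-f-s-disjoint-support}, which was designed for exactly this step.
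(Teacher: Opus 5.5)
Your proposal is correct and follows the paper's proof. The paper likewise repeats the splitting $\mathrm I_s+\mathrm{II}_s$ of Proposition~\ref{proof diff}. It treats $\mathrm I_s$ by \cite[Theorem 1.2]{JSW25} and $\mathrm{II}_s$ by Lemma~\ref{lemma:continuity-smooth}, using Lemma~\ref{diff-express-f-s-disjoint-support} in place of Lemma~\ref{diff-express-f-s}. Your extra checks make explicit what the paper leaves implicit: that $f_s$ is bounded on $\Omega$, the identity $w_s-\cPs^{\!\!c}f_s=(-\Delta)^{s+\mathrm{Log}}g+\cDs_sf_s$, and the logarithmic bound on $L_\Delta(1_\Omega f_s)$.
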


\begin{proof}
  Following the proof of Proposition~\ref{proof diff} line by line and using Lemma~\ref{diff-express-f-s-disjoint-support} in place of Lemma~\ref{diff-express-f-s}, we obtain \eqref{thm1:eq2-section-case-3} for the right-derivative of $\cGs_s f_s$.
\end{proof}

We may now complete the proof of Theorem~\ref{thm1.3}.

\begin{proof}[Proof of Theorem \ref{thm1.3} completed]
  Using assumption (\ref{eq:main-assumption-g-differentiability}), we may write
  $g = g_1 + g_2$ with $g_1 \in C^\alpha_c(\R^N)$ and $g_2 \in L^\infty(\R^N)$ with $g_2 \equiv 0$ in a neighborhood of $\Omega$. Then, the family of solutions $u_s$ of (\ref{main eq}) for $g$ writes as $u_s = u_s^1 + u_s^2$, where $u_s^1$ resp. $u_s^2$ are the solutions of (\ref{main eq}) with $g$ replaced by $g_1,g_2$, respectively. Hence, combining Propositions~\ref{proof diff},~\ref{proof diff-case-3} and Remark~\ref{rem-special-case-2}, we obtain that 
    \begin{equation}\label{thm1:eq2-section-general-case}
        \lim_{\sigma \to 0^+} \frac{\cGs_{s+\sigma}f_{s+\sigma}-\cGs_{s} f_s }{\sigma} = \cGs_s h_s  \qquad \text{in $L^\infty(\Omega)$ for $s \in (0,\frac{\alpha}{2})$.}
    \end{equation}
    with
    $$
    h_s := (-\Delta)^{s+\mathrm{Log}}g - L_{\Delta}[1_{\Omega}f_s]- \cPs^{\! \! c} f_s.    $$
    To deduce from (\ref{thm1:eq2-section-general-case}) that the map
    $$
    (0,\frac{\alpha}{2}) \to L^\infty(\Omega), \qquad s \mapsto \cGs_s f_s
    $$
    is of class $C^1$ with $\partial_s (\cGs f_s) = \cGs h_s$, it suffices, by \cite[Lemma 6.6]{JSW20}, to show that the maps
    $$
    (0,\frac{\alpha}{2}) \to L^\infty(\Omega), \quad s \mapsto \cGs_s f_s,\quad  
    s \mapsto \cGs_s h_s
    $$
    are continuous. The continuity of the map $s \mapsto \cGs_s f_s$ follows immediately from Lemma~\ref{lemma:continuity-smooth} and the continuity of the map
    $$
    (0,\frac{\alpha}{2}) \to L^\infty(\Omega), \qquad   s \mapsto f_s \big|_\Omega =  [(-\Delta)^s g]\big|_{\Omega}.
    $$
    To see the continuity of the map $    (0,\frac{\alpha}{2}) \to L^\infty(\Omega),\; s \mapsto \cGs_s h_s$, we first note that the map
    $$
    (0,\frac{\alpha}{2}) \to L^\infty(\Omega), \qquad   s \mapsto (-\Delta)^{s+\mathrm{Log}}g
    $$
    is continuous by Lemma~\ref{diff-express-f-s-disjoint-support}, and therefore the map $s \mapsto \cGs_s (-\Delta)^{s+\mathrm{Log}}g$ is continuous as a map to $L^\infty(\Omega)$ by Lemma~\ref{lemma:continuity-smooth}.
    Next, we fix a compact subinterval $[c,d] \subset (0,\frac{\alpha}{2})$, and we claim that the map
    \begin{equation}
      \label{eq:second-summand}
  [c,d] \to L^\infty(\Omega), \qquad s \mapsto \cGs_s  \Bigl(L_{\Delta}1_{\Omega}f_s\Bigr) 
    \end{equation}
  is also continuous. For this we first note that, by (\ref{eq:main-assumption-g-differentiability}) and a straightforward computation, the map
  $$
  [c,d] \to C^\eps(\overline \Omega), \qquad s \mapsto f_s =(-\Delta)^s g
  $$
  is continuous for $0 < \eps < \frac{\alpha}{2}-d$. Moreover the linear map
  $C^\eps(\overline \Omega) \to L^\infty_{\loc}(\Omega),\;   \tilde f \mapsto L_{\Delta}(1_\Omega \tilde f)$ is continuous\footnote{Here $1_\Omega \tilde f$ denotes the trivial extension of $\tilde f$ to $\R^N$}, and this gives the continuity of the map  
  $$
  [c,d] \to L^\infty_{\loc}(\Omega), \qquad s \mapsto L_{\Delta}(1_\Omega f_s).
  $$
  Moreover, using \eqref{alt_integral rep_log} and (\ref{def-h-Omega}), we estimate that 
  \begin{align*}
    |\bigl(L_{\Delta}(1_\Omega f_s)\bigr)(x)| &\le c_N \|f_s\|_{C^\eps(\overline{\Omega})} \int_{\Omega}|x-y|^{\eps-N}dy + \bigl|h_\Omega(x) + \rho_N\bigr| |f_s(x)|\\
                                              &\le C_1 \Log (1+ \frac{1}{\delta_\Omega(x)}) \le C_2 \delta_\Omega^{-s}\quad \text{for $x \in \Omega$}
  \end{align*}
  with constants $C_1,C_2>0$ independent of $s \in [c,d]$. Consequently, the map
  in (\ref{eq:second-summand}) is continuous by Lemma~\ref{lemma:continuity-smooth2app} from the appendix. Finally, we claim that the map
      \begin{equation}
      \label{eq:third-summand}
  [c,d] \to L^\infty(\Omega), \qquad s \mapsto \cGs_s  (\cPs^{\! \! c} f_s)
    \end{equation}
    is also continuous. To see this, we first note that the complementary Poisson kernel in (\ref{eq:complementary-poisson-def}) satisfies 
  \begin{equation}
    \label{eq:complementary-poisson-est}
  \|P_s^{\,c}(x,\cdot)\|_{L^1(\Omega)}\le C \delta_{\Omega}^{-s}(x) \qquad \text{for $x \in \Omega$, $s \in [c,d]$}  
  \end{equation}
  with a constant $C$ depending only on $N,c$, and $d$ (see the proof of \cite[Lemma 2.3]{JSW25}), which implies that
  \begin{align*}
    |  (\cPs^{\!\! c} f_s)(x)| &\le c_N C \|f_s\|_{L^\infty(\Omega)} \delta_\Omega^{-s}\quad \text{for $x \in \Omega$.}
  \end{align*}
  Combining this estimate with Lemmas~\ref{lemma:continuity-smooth2app},~\ref{lemma:continuity-general} and Remark~\ref{lemma:continuity-general-complement-Poisson} from the appendix, we deduce the continuity of the map in (\ref{eq:third-summand}). We thus conclude that the map
  $$
  (0,\frac{\alpha}{2}) \to L^\infty(\Omega), \quad s \mapsto \cGs_s h_s
 $$
 is continuous. As remarked above, it now follows from \cite[Lemma 6.6]{JSW20} that the map
 $$
    (0,\frac{\alpha}{2}) \to L^\infty(\Omega), \qquad s \mapsto \cGs_s f_s
    $$
    is of class $C^1$ with $\partial_s (\cGs_s f_s) = \cGs_s h_s$. Consequently, since $u_s = g - v_s = g- \cGs_s f_s$ by (\ref{v-s-u-s}), also the map
 $$
    (0,\frac{\alpha}{2}) \to L^\infty(\Omega), \qquad s \mapsto u_s
    $$
    is of class $C^1$ with $\partial_s u_s = -\cGs_s h_s$ for $s \in (0,\frac{\alpha}{2})$, which means that $\partial_s u_s$ solves (\ref{general-deriv-formula}). Finally, the representation~(\ref{general-deriv-formula-special-case}) follows from Proposition~\ref{proof diff}. The proof is thus finished.
\end{proof}

We finish this section by completing the

\begin{proof}[Proof of Corollary~\ref{cor-thm-1-3}]
Suppose that $g \in L^{\infty}(\RR^N) \cap C^{\alpha}(U)$ for some $\alpha \in(0,1]$ and with $f_s:= (-\Delta)^s g \in L^1_0(\R^N)$ and $f_s \geq 0$ in $\Omega$ for $s \in (0,\frac{\alpha}{2})$.
By (\ref{general-deriv-formula-special-case}), we then have
$$
\partial_s u_s = \cGs_s \left(\cPs^{\!\! c}\Bigl(f_s \big|_{\Omega}\Bigr)- L_\Delta \bigl(1_{\R^N \setminus \Omega}f_s \bigr)\right)\qquad\text{in $\Omega$.}
$$
Since the complementary Poisson kernel $P_s^{c}(x,z)$ is nonnegative, it follows that 
$$
 \int_{\Omega}P_s^{c}(x,z)f_s(z)\,dz + c_N \int_{\R^N \setminus \Omega}\frac{f_s(y)}{|x-y|^N}\,dy \ge 0 
$$
for all $x \in \Omega$, $s \in (0,\frac{\alpha}{2})$. Consequently, for each $x\in \Omega,$  the map $s \mapsto u_s(x)$ is increasing in $s \in (0, \frac{\alpha}{2})$.
\end{proof}

\appendix

\section{A simple uniform Harnack inequality}\label{short proof of harnack}

Here we give a short proof of the Harnack inequality stated in (\ref{harnack s-0}).

\begin{lemma}[Harnack inequality -- proof of \eqref{harnack s-0}]
Let $s\in(0,1)$, $\Omega\subset \RR^N$ open, and let $u\in L^1_s(\RR^N)$ be a nontrivial, nonnegative function such that $(-\Delta)^su=0$ in $\Omega$. Then, given $x\in\Omega$, $r>0$ such that $B_r(x)\subset \Omega$ we have
$$
2^{-4N-2}\leq \frac{u(x_1)}{u(x_2)}\leq 2^{4N+2}\quad\text{for all $x_1,x_2\in B_{r/4}(x)$.}
$$
\end{lemma}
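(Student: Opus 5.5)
The plan is to use the Poisson representation \eqref{eq:poisson-rep-balls} on the ball $B_r(x)$ and compare the kernels pointwise. After translating, assume $x=0$. Since $u\ge 0$ is $s$-harmonic in $B_r$ and (by interior regularity) bounded near $\overline{B_{\rho}}$ for every $\rho<r$, I would first apply the representation on $B_\rho$ with $\rho\in(\frac{r}{2},r)$, which avoids boundedness issues on $\partial B_r$. Then I let $\rho\to r$ at the end, or simply work with $\rho$ throughout, since the constants below are independent of $\rho$. This gives, for $z\in B_\rho$,
$$
u(z)=\gamma_{N,s}\int_{\RR^N\setminus B_\rho}\frac{(\rho^2-|z|^2)^s}{(|y|^2-\rho^2)^s|z-y|^N}\,u(y)\,dy .
$$
Here $u\ge 0$ and the kernel is positive, so it suffices to compare the kernels $P_{s,\rho}(x_1,y)$ and $P_{s,\rho}(x_2,y)$ for $x_1,x_2\in B_{r/4}$ uniformly in $y\in\RR^N\setminus B_\rho$.

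The first factor I handle as follows. For $|x_i|<r/4\le \rho/2$ we have $\frac34\rho^2\le \rho^2-|x_i|^2\le \rho^2$. Hence
$$
\Bigl(\frac{\rho^2-|x_1|^2}{\rho^2-|x_2|^2}\Bigr)^s\le (4/3)^s\le 2 .
$$

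The second factor is handled next. For $|y|\ge\rho$ and $|x_i|\le \rho/2$ we have $\frac{|y|}{2}\le|y|-|x_i|\le |x_i-y|\le |y|+|x_i|\le \frac32|y|$. Hence
$$
\frac{|x_2-y|^N}{|x_1-y|^N}\le 3^N\le 2^{2N}.
$$
Multiplying the two factors yields $P_{s,\rho}(x_1,y)\le 2^{2N+1}P_{s,\rho}(x_2,y)$ for all $y$, and integrating against $u\ge0$ gives $u(x_1)\le 2^{2N+1}u(x_2)$. This is well within the claimed constant $2^{4N+2}$; the stated constant leaves slack, for instance if one instead compares both points with the center $0$ and chains the two bounds, which squares the constant. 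Symmetry in $x_1,x_2$ gives the lower bound as well.

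Finally, the ratio must make sense, i.e.\ $u(x_2)>0$. Since $u$ is nontrivial and nonnegative, $u>0$ on a set of positive measure somewhere in $\RR^N$. If that set meets $\RR^N\setminus B_\rho$, positivity of the kernel gives $u(x_2)>0$ directly. Otherwise $u\equiv0$ outside $B_\rho$ while $u\not\equiv 0$, and the representation would force $u\equiv 0$ in $B_\rho$, a contradiction. The main technical point I expect is justifying the representation formula for a general nonnegative $u\in L^1_s(\RR^N)$; the shrinking-ball device above, together with local boundedness and smoothness of $s$-harmonic functions in $\Omega$ cited in the introduction, is how I would handle it.
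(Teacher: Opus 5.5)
Your proof is correct and follows essentially the paper's route: the Poisson representation on a ball compactly contained in $B_r(x)$ (the paper uses $B_{r/2}(x)$), followed by a pointwise comparison of the explicit kernels. The differences are minor. You compare $x_1$ and $x_2$ directly rather than comparing each with the center and chaining the two bounds, which is where the paper's constant $2^{4N+2}$ comes from, so you obtain the sharper constant $2^{2N+1}$. You also explicitly verify $u(x_2)>0$, which the paper leaves implicit.
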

\begin{proof}
Under the assumptions, it follows that $u|_{\Omega}\in C^{\infty}(\Omega)$ (after passing to a suitable representative of $u$). Hence, $u$ is locally bounded and thus, in particular,
$$
(-\Delta)^su=0 \quad \text{in $B=B_{r/2}(x)$}
$$
and $u$ is bounded in a neighborhood of $B$. Then it follows that for any $z\in B$ we have
$$
u(z)=\frac{\Gamma(\frac{N}{2})}{\pi^{\frac{N}{2}}\Gamma(1-s)\Gamma(s)}\int_{\RR^N\setminus B}\frac{(\frac{r^2}{4}-|z-x|^2)^su(y)}{(|y-x|^2-\frac{r^2}{4})^s|z-y|^N}\, dy.
$$
If $|z-x|\leq \frac{r}4$, $y\in \RR^N\setminus B$ it is not hard to see that
$$
\frac{|x-y|^N}{|z-y|^N}\leq 2^N\Big(1+\frac{|x-z|^N}{|z-y|^N}\Big)\leq 2^N\big(1+2^{N}\big)<8^N
$$
and
$$
\frac{|x-y|^N}{|z-y|^N}\geq \Big(\frac{|z-y|-|x-z|}{|z-y|}\Big)^N=(1-\frac12)^N=2^{-N}.
$$
Thus
\begin{align*}
2^{-N}\frac{3^sr^{2s}}{16^s|x-y|^N}\leq \frac{(\frac{r^2}{4}-|z-x|^2)^s}{|z-y|^N}\leq 8^N\frac{r^{2s}}{4^s|x-y|^N}
\end{align*}
which entails
$$
\frac{3^s}{2^N4^s}u(x)\leq u(z)\leq 8^Nu(x)\quad\text{for all $z\in B_{r/4}(x)$.}
$$
Hence, for $x_1,x_2\in B_{r/4}(x)$ we have
$$
\frac{u(x_1)}{u(x_2)}\leq \frac{8^N2^N4^s}{3^s}\leq 2^{4N+2}\quad\text{and}\quad \frac{u(x_1)}{u(x_2)}\geq \frac{3^s}{2^N4^s 8^N}\geq 2^{-4N-2}
$$
as claimed.
\end{proof}

\section{A weighted limit}\label{weighted limit}
Let $\Omega\subset \RR^N$ open and bounded. Let $s\in(0,1)$ such that $2s<N$ and let $P_s:\Omega\times (\RR^N\setminus\overline{\Omega})\to\RR$ be the fractional Poisson kernel and $G_s:\R^{2N}_{\ast}\to \RR$ the fractional Green kernel. Here, $\Omega$ can be a more general than a $C^2$-domain and we refer to \cite{BKK08} and the references in there for the existence of the fractional Poisson and Green kernel in general nonempty open sets. We note that the formula
\begin{equation}\label{ps-gs relation}
P_s(x,y)=c_{N,s}\int_{\Omega}\frac{G_s(x,z)}{|z-y|^{N+2s}}\, dz\quad\text{for $x\in\Omega$, $y\in \RR^N\setminus\overline{\Omega}$}
\end{equation}
holds.

\begin{remark}
If $\Omega=B_1(0)$, then 
$$
P_s(x,y)=\frac{\Gamma(\frac{N}{2})}{\pi^{\frac{N}{2}}\Gamma(1-s)\Gamma(s)}\frac{(1-|x|^2)^s}{(|y|^2-1)^s|x-y|^N},\quad x\in B_1(0),\, y\in \RR^N\setminus \overline{B_1(0)}
$$
and it is not hard to check that
$$
\lim_{|y|\to\infty} |y|^{2s+N}P_s(x,y)=\frac{\Gamma(\frac{N}{2})}{\pi^{\frac{N}{2}}\Gamma(1-s)\Gamma(s)}(1-|x|^2)^s\quad \text{for}\ x\in B_1(0).
$$
Recall that if $G_s:\RR^N\times \RR^N\to[0,\infty]$ denotes the fractional Green function for $B_1(0)$, then
$$
\int_{\Omega}G_s(x,z)\, dz=\frac{\Gamma(\frac{N}{2})4^{-s}}{\Gamma(1+s)\Gamma(\frac{N}{2}+s)}(1-|x|^2)^s\quad \text{for}\ x\in B_1(0)
$$
and it holds
$$
c_{N,s}\frac{\Gamma(\frac{N}{2})4^{-s}}{\Gamma(1+s)\Gamma(\frac{N}{2}+s)}=\frac{4^ss\Gamma(\frac{N}{2}+s)}{\pi^{\frac{N}{2}}\Gamma(1-s)}\cdot\frac{\Gamma(\frac{N}{2})4^{-s}}{\Gamma(1+s)\Gamma(\frac{N}{2}+s)}=\frac{\Gamma(\frac{N}{2})}{\pi^{\frac{N}{2}}\Gamma(1-s)\Gamma(s)}.
$$
\end{remark}

In spirit of the above remark we next show
\begin{theorem}\label{a limit}
Let $\Omega\subset \RR^N$ be an open bounded set. Then we have
$$
\lim_{|y|\to \infty}\frac{|y|^{2s+N}}{c_{N,s}}P_s(x,y)=\int_{\Omega}G_s(x,z)\, dz \qquad \text{uniformly in $s\in(0,1)$ and $x\in \Omega$.} 
$$
\end{theorem}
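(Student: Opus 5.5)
The plan is to start from the representation \eqref{ps-gs relation} and compare $|z-y|^{-N-2s}$ with $|y|^{-N-2s}$ uniformly for $z\in\Omega$. Write
$$
\frac{|y|^{2s+N}}{c_{N,s}}P_s(x,y)-\int_\Omega G_s(x,z)\,dz=\int_\Omega G_s(x,z)\Bigl(\frac{|y|^{N+2s}}{|z-y|^{N+2s}}-1\Bigr)dz .
$$
Let $\rho:=\sup_{z\in\Omega}|z|<\infty$. For $|y|\ge 2\rho$ and $z\in\Omega$ we have $\bigl||z-y|/|y|-1\bigr|\le \rho/|y|\le\frac12$. Hence, by the mean value theorem applied to $t\mapsto t^{-(N+2s)}$ on $[\frac12,\frac32]$,
$$
\Bigl|\frac{|y|^{N+2s}}{|z-y|^{N+2s}}-1\Bigr|\le (N+2)2^{N+3}\frac{\rho}{|y|}.
$$
This bound holds uniformly in $s\in(0,1)$; the same estimate already appeared in \eqref{alpha-est} and \eqref{crR estimate}. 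It follows that the difference is bounded by $C_N\frac{\rho}{|y|}\,\mathcal K_s(x)$, where $\mathcal K_s(x)=\int_\Omega G_s(x,z)\,dz$.

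It therefore remains to show that $\mathcal K_s(x)$ is bounded uniformly in $s\in(0,1)$ and $x\in\Omega$. This is the only substantive step. I would argue by comparison with a ball. Since $\Omega\subset B_R(x_0)$ for some ball, domain monotonicity of Green functions gives $G_s^\Omega\le G_s^{B_R(x_0)}$. This follows from the maximum principle, or from the probabilistic interpretation as the occupation density of the killed stable process. Consequently $\mathcal K_s(x)\le \int_{B_R(x_0)}G^{B_R}_s(x,z)\,dz$, and the torsion function of a ball is explicit (see the remark above, after scaling):
$$
\frac{\Gamma(\frac N2)4^{-s}}{\Gamma(1+s)\Gamma(\frac N2+s)}\,(R^2-|x-x_0|^2)^s .
$$
This expression is bounded by $\max\{1,R^2\}\cdot\sup_{s\in(0,1)}\frac{\Gamma(N/2)}{\Gamma(1+s)\Gamma(N/2+s)}<\infty$.

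Combining the two steps gives
$$
\Bigl|\frac{|y|^{2s+N}}{c_{N,s}}P_s(x,y)-\mathcal K_s(x)\Bigr|\le \frac{C(N,\Omega)}{|y|}\quad\text{for }|y|\ge 2\rho,
$$
uniformly in $s$ and $x$, and this proves the claim.

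The main obstacle is justifying domain monotonicity of $G_s$ for a general bounded open set, for which I would cite \cite{BKK08}. Alternatively, I would use the fact that $\mathcal K^{B}_s-\mathcal K_s$ is $s$-harmonic in $\Omega$ and nonnegative outside $\Omega$, so it is nonnegative in $\Omega$ by the weak maximum principle. The condition $2s<N$ is automatic here since $N\ge2$.
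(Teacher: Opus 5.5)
Your proof is correct. Its first half matches the paper: you compare $|z-y|^{-N-2s}$ with $|y|^{-N-2s}$ uniformly in $z\in\Omega$, which reduces everything to a uniform bound on $\mathcal K_s(x)=\int_\Omega G_s(x,z)\,dz$. Your mean value estimate adds an explicit rate $O(|y|^{-1})$. It also covers both cases $|z-y|<|y|$ and $|z-y|>|y|$, whereas the paper argues with an $\epsilon$ and only writes down $|y|/|z-y|\le 1+\epsilon$.

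The genuine difference is in how you bound the torsion function uniformly in $s$.

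The paper uses the elementary pointwise bound $G_s\le F_s$, which follows from $H_s\ge 0$. It then integrates the Riesz kernel to get $\mathcal K_s(x)\le \omega_{N-1}\frac{\kappa_{N,s}}{2s}r_\Omega^{2s}$ and appeals to \eqref{eq:kappa-N-s-limit}. That limit only controls $s\to 0^+$. For $N\ge 3$ the bound is indeed uniform on $(0,1)$. For $N=2$, however, $\kappa_{2,s}=\frac{\Gamma(1-s)}{4^s\pi\Gamma(s)}\to\infty$ as $s\to 1^-$. So the paper's constant $C_s(\Omega)$ is not bounded on all of $(0,1)$ in that case. This is harmless for the paper, which only uses the limit as $s\to 0^+$.

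Your route compares $G_s^\Omega\le G_s^{B_R(x_0)}$ and uses the explicit ball torsion $\frac{\Gamma(N/2)4^{-s}}{\Gamma(1+s)\Gamma(N/2+s)}(R^2-|x-x_0|^2)^s$. This gives a bound uniform over the whole range $s\in(0,1)$ for every $N\ge 2$, so it proves the statement exactly as written. The price is two extra ingredients the paper does not need:
\begin{itemize}
\item domain monotonicity of the Green function, or equivalently the maximum principle applied to $\mathcal K_s^{B}-\mathcal K_s$, which is $s$-harmonic in $\Omega$ and nonnegative outside;
\item the explicit torsion formula for the ball.
\end{itemize}
The paper only needs $G_s\le F_s$, which is already available from its preliminaries.
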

\begin{proof}
By \eqref{ps-gs relation} we have, for $x\in \Omega$, $y\in \RR^N\setminus \overline{\Omega}$,
\begin{equation}\label{uniform prove}
\Bigg|\frac{|y|^{2s+N}}{c_{N,s}}P_s(x,y)-\int_{\Omega}G_s(x,z)\, dz\Bigg| \le \int_{\Omega}\Big|\frac{|y| ^{2s+N}}{|z-y|^{2s+N}}-1\Big| G_s(x,z)\, dz.
\end{equation}
Let $\epsilon>0$. Note that since
$$
\lim_{|y|\to\infty} \frac{|y|}{|z-y|}=1 \qquad \text{uniformly in $z \in \Omega$,}
$$
we find $R>0$ large enough such that  
$$
\frac{|y|}{|z-y|}\leq 1+\epsilon.
$$
Hence, from \eqref{uniform prove} we have for $|y|>R$
\begin{align*}
&\Bigg|\frac{|y|^{2s+N}}{c_{N,s}}P_s(x,y)-\int_{\Omega}G_s(x,z)\, dz\Bigg| \leq \int_{\Omega} \Big( (1+\epsilon)^{2s+N}-1\Big)G_s(x,z)\, dz\\
  &\leq (2s+N)\epsilon 2^{2s+N-1}\int_{\Omega} G_s(x,z)\, dz \leq \epsilon(2+N) 2^{1+N}\int_{\Omega} F_s(z-x)\, dz\\
  &= \epsilon(2+N) 2^{1+N}\kappa_{N,s}\int_{\Omega}|z-x|^{2s-N}\, dz \le \epsilon(2+N) 2^{1+N}\kappa_{N,s}\int_{B_{r_\Omega}(x)}|z-x|^{2s-N}\, dz \\
  &= \epsilon C_s(\Omega) \qquad \qquad \text{with}\qquad C_s(\Omega):= \omega_{N-1}(2+N) 2^{1+N}\frac{\kappa_{N,s}}{2s}r_\Omega^{2s}, 
\end{align*}
where again $r_\Omega:= \diam \Omega +1$. By (\ref{eq:kappa-N-s-limit}), $C_s(\Omega)$ remains uniformly bounded for $s  \in (0,1)$.
Since $\eps>0$ was chosen arbitrarily, we deduce that 
$$
\lim_{|y|\to\infty} \Bigg|\frac{|y|^{2s+N}}{c_{N,s}}P_s(x,y)-\int_{\Omega}G_s(x,z)\, dz\Bigg|=0
$$
uniformly in $s\in(0,1)$ and $x\in \Omega$, as claimed.
\end{proof}

\section{A continuity statement involving the Green operator}\label{weighted limit2}

Let $\Omega\subset\R^N$ be an open bounded set. Recalling the definition of the Green operator in \eqref{eq:G-op-definition} and the Riesz operator we present here the proof of the following continuity statement, which is contained within the proof of \cite[Theorem 6.1]{JSW20} and for the reader's convenience we provide it in the formulation as used in the present manuscript.
\begin{lemma}[cf. Lemma 5.2, \cite{JSW20}]\label{lemma:continuity-smooth2app}
	Let $a,b \in [0,1]$, $a<b$, and let $(a,b)\to L^{\infty}_{loc}(\Omega)$, $s\mapsto f_s$ be a continuous curve with 
  \begin{equation}
    \label{lemma:continuity-smooth2app-add-0}
|f_s(x)|\leq C\delta_{\Omega}^{-s}(x) \qquad \text{for $x \in \Omega$, $s \in (a,b)$ with a constant $C>0$.}
\end{equation}
Then the map
$$
(a,b)\to L^{\infty}(\Omega),\quad s\to \cGs_sf_s
$$
is continuous, where $\cGs_{s}$ is the Green operator defined in \eqref{eq:G-op-definition}.
\end{lemma}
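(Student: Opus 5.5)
We need continuity of $s\mapsto \cGs_s f_s$ into $L^\infty(\Omega)$ at a fixed $s_0\in(a,b)$. The plan is to split
$$
\cGs_s f_s-\cGs_{s_0}f_{s_0} = \cGs_s(f_s-f_{s_0}) + (\cGs_s-\cGs_{s_0})f_{s_0},
$$
and to handle the possible blow-up $\delta_\Omega^{-s}$ of $f_s$ near $\partial\Omega$ by truncation. Fix a small $\eta>0$ and a cut-off $\chi_\eta=1_{\{\delta_\Omega>\eta\}}$. Then we write $f_s = \chi_\eta f_s + (1-\chi_\eta)f_s$.

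For the interior piece, $f_s\chi_\eta\to f_{s_0}\chi_\eta$ in $L^\infty(\Omega)$ as $s\to s_0$, because the curve is continuous into $L^\infty_{\loc}(\Omega)$. Lemma~\ref{lemma:continuity-smooth} then gives $\cGs_s(\chi_\eta f_s)\to \cGs_{s_0}(\chi_\eta f_{s_0})$ in $L^\infty(\Omega)$.

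For the boundary piece, we need a bound uniform in $s$ near $s_0$. Using $0\le G_s(x,y)\le F_s(x-y)=\kappa_{N,s}|x-y|^{2s-N}$ and \eqref{lemma:continuity-smooth2app-add-0}, we get
$$
|\cGs_s((1-\chi_\eta)f_s)(x)|\le C\kappa_{N,s}\int_{\{y\in\Omega:\,\delta_\Omega(y)<\eta\}}\delta_\Omega(y)^{-s}|x-y|^{2s-N}\,dy .
$$
We then want this to be small uniformly in $x\in\Omega$ and in $s\in[s_0-\tau,s_0+\tau]$ as $\eta\to0$. Since $s<1$, the weight $\delta_\Omega^{-s}$ is integrable near the boundary. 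Hölder's inequality with exponents $p$ close to $1$ (so that $sp<1$) and $p'$ (so that $(N-2s)p'<N$, which needs $p'<N/(N-2s)$) handles this. Both conditions can be met for $s$ in a compact subinterval, since $1/p+1/p'=1$ and $s<1$ together with $2s/N>0$ leave room. The result is a bound $C\,|\{\delta_\Omega<\eta\}|^{\theta}$ for some $\theta>0$, which tends to $0$. For the delicate ranges, Lemma~\ref{int:la} with $a=s$ and $\lambda=2s$ gives the same conclusion directly, and using its localized form near the boundary shows the integral over $\{\delta_\Omega<\eta\}$ vanishes as $\eta\to0$. The same bound applies at $s=s_0$.

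Combining the pieces: given $\varepsilon>0$, first choose $\eta$ so that both boundary pieces are below $\varepsilon$ uniformly for $s$ near $s_0$. Then choose $s$ close enough to $s_0$ so that the interior pieces differ by less than $\varepsilon$. This gives continuity at $s_0$.

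The main obstacle is the uniform smallness of the boundary layer contribution, which needs care when $s_0$ is close to $1$ (where $\delta^{-s}$ is barely integrable). I would rely on Lemma~\ref{int:la} for it rather than on the crude Hölder argument. If $\Omega$ is only a general open bounded set, one can instead choose exponents as above, using only $\delta_\Omega^{-s}\in L^{p}(\Omega)$ for $sp<1$. This holds for Lipschitz-type sets, so some boundary regularity is implicitly used, consistent with the $C^2$ setting of the paper.
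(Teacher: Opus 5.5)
Your overall structure coincides with the paper's. You cut $f_s$ into a piece supported on a compact subset of $\Omega$, handled by Lemma~\ref{lemma:continuity-smooth}, and a boundary-layer piece controlled through $0\le G_s\le F_s$ and \eqref{lemma:continuity-smooth2app-add-0}. The gap is in the step you yourself single out, namely showing that
\[
\sup_{x\in\Omega}\int_{\{\delta_\Omega<\eta\}\cap\Omega}\delta_\Omega(y)^{-s}|x-y|^{2s-N}\,dy\longrightarrow 0\qquad\text{as }\eta\to0,
\]
uniformly for $s$ near $s_0$.

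\textbf{The H\"older argument fails for every $N\ge 2$.} The factor $\delta_\Omega^{-s}$ lies in $L^p$ only if $p<1/s$. The factor $|x-\cdot|^{2s-N}$ lies in $L^{p'}$ only if $p'<N/(N-2s)$, i.e.\ $p>N/(2s)$. Since $N/(2s)<1/s$ would force $N<2$, no admissible $p$ exists. Taking $p$ close to $1$ makes $p'$ large, which is the wrong direction. Your closing remark about general open sets has the same defect.

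\textbf{The fallback to Lemma~\ref{int:la} does not repair this.} That lemma only treats the configuration where $x$ and the region of integration lie on opposite sides of $\partial\Omega$. It also requires $\lambda<1$, so $\lambda=2s$ is excluded for $s\ge\frac12$. In any case it gives a bound, not smallness on a shrinking layer, and the ``localized form'' you invoke is not specified.

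\textbf{The missing idea is how to turn a uniform bound into smallness.} The paper does this by sacrificing a small power of $\delta_\Omega$. Choose $\epsilon>0$ with $\epsilon<s$ and $s+\epsilon<1$ for all $s$ near $s_0$. On the layer, $|f_s|\le C\delta_\Omega^{-s}\le C\eta^{\epsilon}\delta_\Omega^{-s-\epsilon}$, hence
\[
\bigl|\cGs_s\bigl(f_s1_{\{\delta_\Omega<\eta\}}\bigr)\bigr|\le C\eta^{\epsilon}\,\cFs_s\delta_\Omega^{-s-\epsilon}\qquad\text{in }\Omega .
\]
The same-side weighted Riesz estimate (Lemma 2.5 in \cite{JSW20}, not Lemma~\ref{int:la}) bounds $\cFs_s\delta_\Omega^{-s-\epsilon}$ uniformly on $\Omega$ for such $s$. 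This works precisely because $s+\epsilon<1$ and $2s>s+\epsilon$.

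Alternatively, apply H\"older to the product rather than factor by factor. For $q>1$ close to $1$ one has $\sup_{x\in\Omega}\|\delta_\Omega^{-s}|x-\cdot|^{2s-N}\|_{L^q(\Omega)}<\infty$ uniformly for $s$ near $s_0$. This is again a same-side estimate, now with exponents $sq<1$ and $N-(N-2s)q>sq$. The layer integral is then at most $|\{\delta_\Omega<\eta\}\cap\Omega|^{1-1/q}$ times this supremum.

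With either fix, the rest of your argument goes through as written.
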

\begin{proof}
  Let $s_0\in(a,b)$, fix $\epsilon\in(0,\min\{a,b-s_0\})$ and choose a sufficiently large compact subset $K \subset \Omega$ with
  \begin{equation}
    \label{lemma:continuity-smooth2app-add-1}
    \delta_\Omega^{\epsilon}(x) \le \frac{\epsilon}{C} \qquad \text{for $x \in \Omega \setminus K$}
\end{equation}
For $s \in (a,b)$, we then write $f_s=f_{1,s}+f_{2,s}$ with $f_{1,s} = f_s 1_K$ and $f_{2,s} = f_s 1_{\Omega \setminus K}$. Then the map
$$
(a,b) \to L^\infty(\Omega), \qquad s \mapsto f_{1,s}
$$
is continuous, and therefore Lemma \ref{lemma:continuity-smooth} gives the continuity of the map
$$
(a,b)\to L^{\infty}(\Omega),\quad s\mapsto \cGs_s f_{1,s}.
$$
Moreover, by~(\ref{lemma:continuity-smooth2app-add-0}) and (\ref{lemma:continuity-smooth2app-add-1}) we have 
  \begin{equation}
    \label{lemma:continuity-smooth2app-add-2}
\|\delta_{\Omega}^{s+\epsilon}f_{2,s}\|_{L^\infty(\Omega)}<\epsilon \qquad \text{for $s \in (a,b)$.}
\end{equation}
Then, for $\sigma\in \R$, $|\sigma|<\min\{a,b-s_0\}-\epsilon$ it follows with a similar estimate to Lemma \ref{int:la} (see \cite[Lemma 2.5]{JSW20}) that for $x\in \Omega$ we have
\begin{align*}
  &\left|\Bigl(\cGs_{{s_0}+\sigma}f_{2,{s_0+\sigma}}-\cGs_{s_0}f_{2,{s_0}}\Bigr)(x)\right|\leq  \left|\bigl(\cGs_{{s_0}+\sigma}f_{2,{s_0+\sigma}}\bigr)(x)\right|+\left|\bigl(\cGs_{s_0}f_{2,{s_0}}\bigr)(x)\right|\\
                                                                                         &\le \bigl(\cFs_{{s_0}+\sigma}|f_{2,{s_0+\sigma}}|\bigr)(x)+\bigl(\cFs_{s_0}|f_{2,{s_0}}|\bigr)(x)\\                                                                  &\le \|\delta_{\Omega}^{s_0+\sigma +\epsilon}f_{2,{s_0+\sigma}}\|_{L^\infty(\Omega)} \Bigl(\cFs_{s_0+\sigma}\delta_{\Omega}^{-s_0-\sigma-\epsilon}\Bigr)(x)+            \|\delta_{\Omega}^{s_0+\epsilon}f_{2,{s_0}}\|_{L^\infty(\Omega)} \Bigl(\cFs_{s_0}\delta_{\Omega}^{-s_0-\epsilon}\Bigr)(x)  \\
&\leq \epsilon \tilde C\left[(\min\{s_0+\sigma,1-s_0-\sigma\}-\epsilon)^{-3}+(\min\{s_0,1-s_0\}-\epsilon)^{-3}\right]
\end{align*}
with a constant $\tilde C>0$. Hence,
\begin{align*}
\limsup_{\sigma\to0}
  \left\|\cGs_{s_0+\sigma}f_{s_0+\sigma}-\cGs_s f_{s_0}\right\|_{L^{\infty}(\Omega)}&\leq \limsup_{\sigma\to 0}\left\|\cGs_{s_0+\sigma}f_{2,s_0+\sigma}-\cGs_s f_{2,s_0}\right\|_{L^{\infty}(\Omega)}\\
  &\leq \epsilon \tilde C(\min\{s,1-s\}-\epsilon)^{-3}.
\end{align*}
Since $\epsilon>0$ was chosen arbitrary, the claim follows.
\end{proof}

We also add the following continuity statement.

\begin{lemma}\label{lemma:continuity-general}
  Let $a,b \in [0,1]$, $a<b$, and let $(a,b)\to L^{\infty}(\Omega)$, $s\mapsto f_s$ be a continuous curve. Moreover, consider a continuous map
  $$
  (a,b) \times \Omega \times \Omega \to \R, \qquad (s,x,z) \to p_s(x,z),
  $$
and suppose that there exists $r>1$ with
  \begin{equation}
    \label{lemma:continuity-general-add-0}
\sup_{x \in K, a<s<b} \|p_s(x,\cdot)\|_{L^r(\Omega)}< \infty \quad \text{for every compact subset $K \subset \Omega$.}
\end{equation}
Then the map
$$
(a,b)\to L^{\infty}_{\loc}(\Omega),\quad s \to \int_{\Omega} p_s(\cdot,z)f_s(z)\,dz
$$
is continuous.    
\end{lemma}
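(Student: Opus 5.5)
Fix a compact set $K\subset\Omega$ and $s_0\in(a,b)$. I will show that $\sup_{x\in K}\bigl|\int_\Omega p_s(x,z)f_s(z)\,dz-\int_\Omega p_{s_0}(x,z)f_{s_0}(z)\,dz\bigr|\to0$ as $s\to s_0$. To do this, I split the difference into two terms:
\[
\int_\Omega p_s(x,z)\bigl(f_s-f_{s_0}\bigr)(z)\,dz+\int_\Omega \bigl(p_s(x,z)-p_{s_0}(x,z)\bigr)f_{s_0}(z)\,dz.
\]
The first term is easy. Let $r'$ be the conjugate exponent of $r$ and write $\Lambda_K:=\sup_{x\in K,\,s\in(a,b)}\|p_s(x,\cdot)\|_{L^r(\Omega)}$, which is finite by (\ref{lemma:continuity-general-add-0}). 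By H\"older's inequality the first term is bounded by $\Lambda_K|\Omega|^{1/r'}\|f_s-f_{s_0}\|_{L^\infty(\Omega)}$. This tends to zero uniformly in $x\in K$ by the assumed continuity of $s\mapsto f_s$ into $L^\infty(\Omega)$.

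For the second term I use uniform integrability, which is where $r>1$ enters. Let $\eta>0$. Choose a compact set $K'\subset\Omega$ with $|\Omega\setminus K'|^{1/r'}$ small. By H\"older, the part of the integral over $\Omega\setminus K'$ is at most $2\Lambda_K\|f_{s_0}\|_{L^\infty}|\Omega\setminus K'|^{1/r'}<\eta$, uniformly in $x\in K$ and $s$. On $K'$ the argument splits further, because $p$ is only assumed continuous on $(a,b)\times\Omega\times\Omega$ and may blow up near the diagonal $z=x$, as the complementary Poisson kernel does not, but a Green-type kernel might. For $\rho>0$ I therefore also remove the set $\{z\in K':|z-x|<\rho\}$. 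Its measure is at most $|B_\rho|$, so by H\"older again its contribution is at most $2\Lambda_K\|f_{s_0}\|_\infty|B_\rho|^{1/r'}<\eta$ once $\rho$ is small, uniformly in $x\in K$. On the remaining compact set
\[
\{(x,z)\in K\times K' : |x-z|\ge\rho\}\times[s_0-\tau,s_0+\tau],
\]
the map $(s,x,z)\mapsto p_s(x,z)$ is uniformly continuous. Hence $p_s(x,z)\to p_{s_0}(x,z)$ uniformly there, and this piece is at most $|\Omega|\,\|f_{s_0}\|_\infty\sup|p_s-p_{s_0}|\to0$. Combining the three pieces gives $\limsup_{s\to s_0}\sup_K|\cdots|\le 2\eta$ for every $\eta>0$, which proves the continuity into $L^\infty_{\loc}(\Omega)$.

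The only delicate step is this second term. The point is that continuity of $p$ holds only pointwise on the open set $\Omega\times\Omega$, so convergence must be upgraded to uniform convergence after cutting away small sets. Those excised regions are controlled solely through the uniform $L^r$ bound, and this uses $r>1$ essentially: with $r=1$ the H\"older smallness on small sets fails.
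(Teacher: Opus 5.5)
Your proof is correct and follows the paper's argument. You use the same splitting into $\int_\Omega p_s(f_s-f_{s_0})$ and $\int_\Omega (p_s-p_{s_0})f_{s_0}$, control the first term and the part over $\Omega\setminus K'$ by H\"older with the uniform $L^r$ bound, and handle the rest by uniform continuity of $(s,x,z)\mapsto p_s(x,z)$ on a compact set. Your extra excision of $\{|z-x|<\rho\}$ is not needed under the stated hypothesis, since continuity on all of $(a,b)\times\Omega\times\Omega$ already gives uniform continuity on $[s_0-\tau,s_0+\tau]\times K\times K'$. It is harmless, though, and it makes your argument work for kernels that are only continuous off the diagonal.
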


\begin{proof}
  Let $K \subset \R^N$ be a compact set and
  $$
  c_K:= \sup_{x \in K, a<s<b} \|p_s(x,\cdot)\|_{L^r(\Omega)}< \infty.
  $$
  Moreover, let $s \in (a,b)$ and $\sigma \in \R$ with $s+ \sigma \in (a,b)$. Then for $x \in K$ we have
  \begin{align*}
    &\Bigl|\int_{\Omega} p_{s+\sigma}(x,z)f_{s+\sigma}(z)\,dz -\int_{\Omega} p_s(x,z)f_s(z)\,dz \Bigr|\\
    &\le \Bigl|\int_{\Omega} p_{s+\sigma}(x,z)\bigl(f_{s+\sigma}(z)-f_s(z)\bigr)\,dz\Bigr|+ \Bigl|\int_{\Omega} (p_{s+\sigma}(x,z)-p_s(x,z)\bigr)f_s(z)\,dz \Bigr|, 
   \end{align*}
   where
   $$
   \sup_{x \in K} \Bigl|\int_{\Omega} p_{s+\sigma}(x,z)\bigl(f_{s+\sigma}(z)-f_s(z)\bigr)\,dz
   \le \sup_{x \in K} \|\|p_{s+\sigma}(x,\cdot)\|_{L^1(\Omega)} \|f_{s+\sigma}-f_s\|_{L^\infty(\Omega)} \to 0
   $$
   as $\sigma \to 0$ by assumption. Moreover, for every compact subset $\tilde K \subset \R^N$, we have
   $$
   \sup_{x \in K} \Bigl|\int_{\tilde K} (p_{s+\sigma}(x,z)-p_s(x,z)\bigr)f_s(z)\,dz \Bigr| \to 0 \qquad \text{as $\sigma \to 0$}
   $$
   by the locally uniform continuity of the map $(s,x,z) \to p_s(x,z)$. 
   Letting $\eps>0$ and choosing $\tilde K$ with $|\Omega \setminus \tilde K|<\eps$, we then conclude that
   \begin{align*}
   &\limsup_{\sigma \to 0} \Bigl\|\int_{\Omega} p_{s+\sigma}(x,z)f_{s+\sigma}(z)\,dz -\int_{\Omega} p_s(x,z)f_s(z)\,dz \Bigr\|_{L^\infty(K)}\\
     &\le \limsup_{\sigma \to 0}\Bigl\|\int_{\Omega \setminus \tilde K} (p_{s+\sigma}(x,z)-p_s(x,z)\bigr)f_s(z)\,dz \Bigr \|_{L^\infty(K)} \\
     &\le 2 \|f_s\|_{L^\infty(\Omega)} \sup_{s \in (a,b), x \in K}\|p_s(x,\cdot)\|_{L^1(\Omega \setminus \tilde K)}\\ 
     &\le 2 \|f_s\|_{L^\infty(\Omega)}|\Omega \setminus \tilde K|^{\frac{1}{r'}} \sup_{s \in (a,b), x \in K}\|p_s(x,\cdot)\|_{L^r(\Omega)}\\ 
     &\le 2 \eps^{\frac{1}{r'}} c_K \|f_s\|_{L^\infty(\Omega)} 
   \end{align*}
with $r' = \frac{r}{r-1}$. Since $\eps>0$ was chosen arbitrarily, it follows that 
$$
\lim_{\sigma \to 0} \Bigl\|\int_{\Omega} p_{s+\sigma}(\cdot,z)f_{s+\sigma}(z)\,dz -\int_{\Omega} p_s(\cdot,z)f_s(z)\,dz \Bigr\|_{L^\infty(K)}= 0,
$$
which proves the claim of the lemma.  
\end{proof}

\begin{remark}
  \label{lemma:continuity-general-complement-Poisson}
  Lemma~\ref{lemma:continuity-general} applies, in particular, to the family of complementary Poisson kernels $p_s = P_s^c$ defined in (\ref{eq:complementary-poisson-def}). In fact, since
  $$
  \int_{\R^N \setminus \Omega}P_s(z,y)\,dy  = 1 \qquad \text{for every $z \in \Omega$, $s \in (0,1)$,}
  $$
the definition of $P_s^c$ implies that 
  $$
  0 \le P_s^c(x,z) \le c_N \delta_\Omega^{-N}(x)\int_{\R^N \setminus \Omega}P_s(z,y)\,dy \le c_N \delta_\Omega^{-N}(x)\qquad \text{for every $z \in \Omega$, $s \in (0,1)$.}
  $$
and therefore 
  \begin{equation}
    \label{lemma:continuity-general-add-0}
\sup_{x \in K, 0<s<1} \|p_s(x,\cdot)\|_{L^\infty(\Omega)}< \infty \quad \text{for every compact subset $K \subset \Omega$.}
\end{equation}
\end{remark}

\section{Properties of the fractional logarithmic Laplacian}\label{appD} Here we provide a proof of \eqref{eq:pointwise-identity-fractional-logarithmic} for functions $u \in C^{\alpha}_{\loc}(\R^N)$  with $\alpha>2s$, noting that this pointwise identity had already been proved in \cite[Proposition 1.1 and Theorem 1.1]{CH26} for $u\in C^2_c(\RR^N)$.

\begin{lemma}\label{rep c1}
Let $0<s<1$, $\Omega,U\subset \RR^N$ open with $\overline{\Omega}\subset U$, and $u \in C^{\alpha}_{\loc}(U)\cap L^{\infty}(\RR^N)$ with $\alpha >2s.$ Then, for $x \in \Omega$ we have
\begin{align*}
 [(-\Delta)^{s+\mathrm{Log}}u](x)=\frac{d}{ds}[(-\Delta)^su](x)= c_{N,s}\text{P.V.}\int_{\RR^N} \frac{u(x)-u(x+z)}{|z|^{N+2s}}\bigl(-2\ln|z|\bigr)\,dz+b_{N,s}(-\Delta)^s u(x)  
\end{align*}
 where $\frac{d}{ds}c_{N.s}=b_{N, s}.$ Moreover, if $U=\RR^N$, then for each $x \in \RR^N,$ we have
 \begin{align}\label{commute-fractional-log}
 L_{\Delta}(-\Delta)^s u(x)=(-\Delta)^sL_{\Delta}u(x).
 \end{align}
\end{lemma}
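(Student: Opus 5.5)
First I prove the derivative formula by differentiating under the integral. Fix $x\in\Omega$ and write the principal value in symmetric form,
$$[(-\Delta)^s u](x)=\frac{c_{N,s}}{2}\int_{\RR^N}\frac{2u(x)-u(x+z)-u(x-z)}{|z|^{N+2s}}\,dz,$$
which converges absolutely: near $z=0$ the Hölder condition $u\in C^\alpha$ with $\alpha>2s$ bounds the numerator by $C|z|^{\alpha}$ for $|z|<\rho:=\tfrac12\mathrm{dist}(x,\partial U)$, and for $|z|\ge\rho$ boundedness of $u$ suffices. The integrand is $c_{N,s}$ times $k_s(z)=|z|^{-N-2s}$ times a fixed function, and $\partial_s k_s(z)=-2\ln|z|\,|z|^{-N-2s}$. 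Since $c_{N,s}$ is smooth in $s$, the product rule gives the two terms $b_{N,s}(-\Delta)^s u(x)$ and
$$c_{N,s}\int_{\RR^N}\frac{2u(x)-u(x+z)-u(x-z)}{2|z|^{N+2s}}\bigl(-2\ln|z|\bigr)\,dz,$$
which equals the claimed P.V. expression by symmetrization.

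To justify differentiation under the integral sign, I fix $s$ and let $s'$ range over a small interval $[s-\eta,s+\eta]$ with $2(s+\eta)<\alpha$ and $s-\eta>0$, and dominate $|\partial_{s'}(\text{integrand})|$ uniformly. On $|z|<\min\{\rho,1\}$ the bound is $C|z|^{\alpha-N-2s-2\eta}|\ln|z||$, which is integrable because $\alpha-2s-2\eta>0$. On $|z|\ge \max\{\rho,1\}$ the bound is $4\|u\|_\infty|z|^{-N-2s+2\eta}\ln|z|$, integrable because $s-\eta>0$. The intermediate annulus is harmless. Dominated convergence (or the mean value theorem) then gives the derivative, and also continuity of $s\mapsto[(-\Delta)^{s+\mathrm{Log}}u](x)$. \textbf{This domination step is the main point}, and it is exactly where the hypotheses $\alpha>2s$ and boundedness enter.

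Next I prove the commutation \eqref{commute-fractional-log} when $U=\RR^N$, through the semigroup identity and the expansion of $(-\Delta)^\sigma$ as $\sigma\to0$. The approach is to show that both sides equal $\frac{d}{ds}(-\Delta)^s u(x)$, computed as a right derivative. By Proposition~\ref{semigroup-general} (with $\Omega$ replaced by a ball around $x$), whose hypotheses hold since $u$ and $(-\Delta)^s u$ are bounded and hence in $L^1_t$, we have
$$(-\Delta)^{s+\sigma}u=(-\Delta)^\sigma(-\Delta)^s u=(-\Delta)^s(-\Delta)^\sigma u \quad\text{near } x$$
for small $\sigma>0$; the second equality follows by swapping roles, using $(-\Delta)^\sigma u\in L^\infty\cap C^{\alpha-2\sigma}_{\rm loc}$.

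From the first form I get
$$\frac{(-\Delta)^{s+\sigma}u-(-\Delta)^s u}{\sigma}=\frac{(-\Delta)^\sigma-\id}{\sigma}f, \qquad f:=(-\Delta)^s u\in L^\infty\cap C^{\alpha-2s}_{\rm loc}\subset L^1_0,$$
and Lemma~\ref{extension-CW-19} gives the limit $L_\Delta f(x)$.

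From the second form I get
$$\frac{(-\Delta)^{s+\sigma}u-(-\Delta)^s u}{\sigma}=(-\Delta)^s\Bigl[\frac{(-\Delta)^\sigma u-u}{\sigma}\Bigr].$$
I then need convergence of $\frac{(-\Delta)^\sigma u-u}{\sigma}$ to $L_\Delta u$ in a norm strong enough to pass $(-\Delta)^s$: $C^{\beta}$ with $\beta>2s$ near $x$, plus a uniform weighted global bound. I expect this to be the main obstacle. I would first try to split $u=u\phi+u(1-\phi)$ as in Lemma~\ref{extension-CW-19}. The far part has explicit smooth kernels near $x$, and for the near part I would use Hölder estimates of $(-\Delta)^\sigma-\id$ on $C^\alpha_c$ functions (losing an arbitrarily small exponent). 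Identifying the two limits then yields $L_\Delta(-\Delta)^s u=(-\Delta)^sL_\Delta u$ at $x$, and both equal $(-\Delta)^{s+\mathrm{Log}}u(x)$ by the first part.
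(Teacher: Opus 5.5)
Your proof of the derivative formula follows the paper's argument: a difference quotient in the order, a dominating function for the $s$-derivative of the kernel (integrable near $0$ because $\alpha>2s$, and at infinity because $u$ is bounded and $s>0$), then dominated convergence. Using the second difference $2u(x)-u(x+z)-u(x-z)$ is in fact the more robust choice. The paper bounds $|u(x)-u(x+z)|\le C|z|^{\alpha}$, which holds only for $\alpha\le 1$, whereas $s\ge\frac12$ forces $\alpha>1$. You also obtain the two-sided derivative.

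The genuine gap is in the commutation identity. Your inclusion $L^\infty\cap C^{\alpha-2s}_{\mathrm{loc}}\subset L^1_0(\R^N)$ is false, because $\int_{\R^N}(1+|x|)^{-N}\,dx=\infty$. Already $u\equiv 1$ is not in $L^1_0(\R^N)$, and $u(x)=\cos x_1$ has $(-\Delta)^s u=\cos x_1\notin L^1_0(\R^N)$. Hence Lemma~\ref{extension-CW-19} cannot be applied to $f=(-\Delta)^s u$. Moreover, for bounded $u$ neither $L_\Delta u$ nor $L_\Delta(-\Delta)^s u$ need be defined by \eqref{integral rep_log}: the tail integral $\int_{B_1^c(x)}|x-y|^{-N}u(y)\,dy$ already diverges for $u\equiv 1$. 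Your second route breaks down concretely here. For $u\equiv 1$ one has $\frac{(-\Delta)^\sigma u-u}{\sigma}=-\frac{1}{\sigma}$, which diverges, so there is no limit $L_\Delta u$ to push through $(-\Delta)^s$. This is why the paper does not argue pointwise at this point. It reads both sides in the Lizorkin space $\mathcal Z'(\R^N)$ of distributions modulo polynomials, where $L_\Delta$ and $(-\Delta)^s$ are the Fourier multipliers $2\ln|\xi|$ and $|\xi|^{2s}$ and therefore commute. It then invokes \cite[Theorem 1.1 (iv)]{CH26} together with $C^\alpha(\R^N)\cap L^\infty(\R^N)\subset\mathcal Z'(\R^N)$.

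Even after adding decay (say $u\in C^\alpha_c(\R^N)$, so that $u$ and $(-\Delta)^s u$ lie in $L^1_0(\R^N)$), the proposal does not yet prove the identity. The step you yourself call the main obstacle is only sketched: convergence of $\frac{(-\Delta)^\sigma u-u}{\sigma}$ to $L_\Delta u$ in $C^\beta$ near $x$ with $\beta>2s$, together with a uniform weighted bound at infinity. That step is exactly the analytic content of the commutation. There are also two smaller issues:
\begin{itemize}
\item The identity $(-\Delta)^{s+\sigma}u=(-\Delta)^s(-\Delta)^\sigma u$ is not covered by Proposition~\ref{semigroup-general}. That result requires the outer order to be at most the inner one ($t\le\frac{s}{2}$ in its notation), whereas here the outer order $s$ exceeds the inner order $\sigma$. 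The duality argument has to be redone with the weight $(1+|x|)^{N+2\sigma}$.
\item You use boundedness of $(-\Delta)^s u$ on all of $\R^N$ to place it in $L^1_t(\R^N)$. This needs a global H\"older bound on $u$, as in the paper's $C^\alpha(\R^N)$, not merely $u\in C^\alpha_{\mathrm{loc}}(\R^N)$.
\end{itemize}
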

\begin{proof} 
Fix $x\in \Omega$ and without loss of generality we assume $B_1(x)\subset \Omega$ (otherwise, we may relabel $u$ by considering $v(x)=u(tx)$). It is enough to consider $0<t<s<\frac{\alpha}{2}<1$ and to compute
\begin{align*}
\frac{(-\Delta)^s u(x)-(-\Delta)^t u(x)}{s-t}\quad \text{as}\quad t\to s.
\end{align*}
Indeed, we have
\begin{align*}
&\frac{(-\Delta)^s u(x)-(-\Delta)^t u(x)}{s-t}\\
&=\frac{c_{N, s}-c_{N, t}}{s-t}\text{P.V.}\int_{\RR^N}\frac{u(x)-u(x+z)}{|z|^{N+2s}}\,dz+c_{N, t}\text{P.V.}\int_{\RR^N}\left(u(x)-u(x+z)\right)\frac{|z|^{-(N+2s)}-|z|^{-(N+2t)}}{s-t}\, dz.
\end{align*}
First, we show that
\begin{align}\label{eq:convergence}
\left\|\,x\mapsto\text{P.V.}\int_{\RR^N}\left(u(x)-u(x+z)\right)\left(\frac{|z|^{-(N+2s)}-|z|^{-(N+2t)}}{s-t}+\frac{2\ln|z|}{|z|^{N+2s}}\right)\, dz\right\|_{L^{\infty}(\Omega)} \to 0\quad \text{as} \quad t\to s.
\end{align}
We note that \[F_t(|z|):=\frac{|z|^{-(N+2s)}-|z|^{-(N+2t)}}{s-t}+\frac{2\ln|z|}{|z|^{N+2s}}\to 0 \quad \text{as}\quad t\to s,\]
and since $u \in C^{\alpha}_{\loc}(U),$ we have
\begin{align*}
    \int_{\varepsilon < |z|< 1}\left(u(x)-u(x+z)\right)F_t(|z|)\, dz \leq \int_{\varepsilon < |z|< 1}|z|^{\alpha}F_t(|z|)\, dz.
\end{align*}
An application of the mean value theorem guarantees that there exists a $\xi\in (t, s)$ such that
\begin{align*}
|F_t(|z|)|\leq G_s(|z|)=2|\ln |z||\left(|z|^{-(N+2s)}+|z|^{-(N+2\xi)}\right).
\end{align*}
Hence, we obtain
\begin{align*}
\int_{\varepsilon<|z|<1}|z|^{\alpha}|F_t(|z|)|\leq 2\int_{\varepsilon<|z|<1}|z|^{\alpha-\kappa-N-2s}+|z|^{\alpha-\kappa-N-2\xi}\, dz<\infty\quad \text{for every small}\quad \varepsilon, \kappa>0,
\end{align*}
since $\xi<s<\frac{\alpha}{2}.$ Thus using Lebesgue dominated convergence theorem, we deduce that 
\begin{align*}
\sup_{x\in V }\text{P.V} \int_{|z|<1}\left(u(x)-u(x+z)\right)\left(\frac{|z|^{-(N+2s)}-|z|^{-(N+2t)}}{s-t}+\frac{2\ln|z|}{|z|^{N+2s}}\right)\, dz \to 0\quad \text{as} \quad t\to s.
\end{align*}
On the other hand, for $|z|\geq 1,$ we have
\begin{align*}
\int_{|z|\geq 1}\left(u(x)-u(x+z)\right)F_t(|z|)\, dz\leq 2\|u\|_{L^{\infty}(\RR^N)}\int_{\RR^N}|\ln|z||\left(|z|^{-(N+2s)}+|z|^{-(N+2\xi)}\right)\, dz< \infty.
\end{align*}
Hence, again by dominated convergence, we conclude
\begin{align*}
    \sup_{x\in \Omega} \int_{|z|\geq 1} \left(u(x)-u(x+z)\right)\left(\frac{|z|^{-(N+2s)}-|z|^{-(N+2t)}}{s-t}+\frac{2\ln|z|}{|z|^{N+2s}}\right)\, dz \to 0\quad \text{as} \quad t\to s.
\end{align*}
Combining these two convergences above, we have \eqref{eq:convergence}. Finally, since  by definition
\begin{align*}
 \frac{c_{N, s}-c_{N, t}}{s-t}\to b_{N, s}\,\,\, \text{and}\,\,\, c_{N, t}\to c_{N, s}\,\,\, \text{as}\,\,\, t\to s,
\end{align*}
we complete the proof the first statement.

The second statement \eqref{commute-fractional-log} is a direct consequence of \cite[Theorem 1.1 (iv)]{CH26} since $C^{\alpha}(\RR^N)\cap L^{\infty}(\RR^N)\subset \mathcal{Z}^{\prime}(\RR^N)$, where $\mathcal{Z}(\RR^N)$ denotes the Lizorkin space on $\RR^N.$ 
\end{proof}

We prove the regularity of the fractional logarithmic Laplacian in the following result, which might be of independent interest.
\begin{lemma}\label{holder for frac log laplacian}
  Let $0<s<1$, $\Omega,U\subset \RR^N$ open with $\overline{\Omega}\subset U$, and $u \in C^{\alpha}_{\loc}(U)\cap L^{\infty}(\RR^N)$ for some $\alpha\in (0,1]$ with $\alpha>2s$. Then $(-\Delta)^{s+\mathrm Log} u\in C^{\alpha-\kappa -2s}(\Omega)$ for any $\kappa>0.$
\end{lemma}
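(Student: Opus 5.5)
We prove the Hölder bound directly from the integral representation (\ref{fractional-logarithmic-def}). Fix a compact $K\subset\Omega$; it suffices to bound the $C^{\alpha-\kappa-2s}$ seminorm on $K$ (the statement is understood locally, or on $\Omega$ if $\overline\Omega\subset U$ with a uniform Hölder bound on a neighbourhood). The second summand $b_{N,s}(-\Delta)^s u$ is standard: for $u\in C^\alpha$ near $\overline\Omega$ and bounded, $(-\Delta)^s u\in C^{\alpha-2s}$. So the work is in the term
\[
T u(x):=c_{N,s}\,\PV\int_{\R^N}\frac{u(x)-u(x+z)}{|z|^{N+2s}}\bigl(-2\ln|z|\bigr)\,dz .
\]
Pick $r_0\in(0,1)$ with $K+B_{2r_0}\subset U$ and split $Tu=T_1u+T_2u$ into $|z|<r_0$ and $|z|\ge r_0$. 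The key observation is that $|\ln|z||\le C_\kappa|z|^{-\kappa}$ for $|z|<r_0$. Hence near the origin the kernel $k(z)=|z|^{-N-2s}|\ln|z||$ is dominated by a kernel of order $2s+\kappa$, while away from the origin it is integrable.

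For the far part $T_2$ we write $T_2u(x)=u(x)\int_{|z|\ge r_0}k\,dz-\int_{|z|\ge r_0}u(x+z)k(z)\,dz$; the kernel $k$ is integrable there. The first piece is $C^\alpha$. For the second, I would change variables $y=x+z$, which shifts the non-smooth cutoff onto $x$. The difference of the second piece at $x_1,x_2$ is then bounded by $\|u\|_\infty\int|k1_{|\cdot|\ge r_0}(\cdot-x_1)-k1_{|\cdot|\ge r_0}(\cdot-x_2)|$. This gives a Lipschitz bound in $|x_1-x_2|$, since $k$ is smooth and integrable away from $0$, plus a boundary-layer term of size $O(|x_1-x_2|)$. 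So $T_2u$ is Lipschitz on $K$.

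The near part $T_1u$ mimics the standard proof that $(-\Delta)^s$ maps $C^\alpha$ to $C^{\alpha-2s}$, with $2s$ replaced by $2s+\kappa$. Set $h=|x_1-x_2|$ and $\delta_z u(x)=u(x)-u(x+z)$. We estimate $T_1u(x_1)-T_1u(x_2)=\int_{|z|<r_0}(\delta_z u(x_1)-\delta_z u(x_2))k(z)\,dz$ by splitting the domain into two regions.
\begin{itemize}
\item[(a)] On $|z|<h$, use $|\delta_z u(x_i)|\le[u]_\alpha|z|^\alpha$ together with $k(z)\le C|z|^{-N-2s-\kappa}$. This gives $C[u]_\alpha h^{\alpha-2s-\kappa}$, which requires $\alpha>2s+\kappa$; we may shrink $\kappa$, since Hölder spaces decrease.
\item[(b)] On $h\le|z|<r_0$, use $|\delta_z u(x_1)-\delta_z u(x_2)|\le 2[u]_\alpha h^\alpha$. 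This gives $Ch^\alpha\int_{|z|\ge h}|z|^{-N-2s-\kappa}\,dz\le Ch^{\alpha-2s-\kappa}$.
\end{itemize}
Boundedness of $T_1u$ comes from the same estimate with $|z|^\alpha$ alone. No principal value is needed here because $\alpha>2s+\kappa$ makes the integrand absolutely integrable.

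Adding the pieces shows $(-\Delta)^{s+\mathrm{Log}}u\in C^{\alpha-\kappa-2s}(K)$ with constants depending on $\|u\|_\infty$, $[u]_{C^\alpha(K+B_{r_0})}$, $s$ and $\kappa$. By Lemma \ref{rep c1} this representation agrees with $\frac{d}{ds}(-\Delta)^s u$. The main obstacle is purely the logarithmic loss: it prevents the sharp exponent $\alpha-2s$ and is exactly what the $\kappa$ absorbs. The small-$h$ regime and the cutoff in $T_2$ are routine. For $h\ge r_0$ the Hölder quotient is controlled by twice the sup bound.
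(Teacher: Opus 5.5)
Your proof is correct and follows essentially the same route as the paper. The paper likewise absorbs the logarithm via $|\ln|z||\le c(\kappa)|z|^{-\kappa}$, splits at the scale $|x_1-x_2|$ and at a fixed radius $d/2$ with $d=\mathrm{dist}(\Omega,\partial U)$, and treats the far part by a change of variables with a mean value estimate (on $D_1$) plus a boundary-layer bound (on $D_2,D_3$). The differences are cosmetic: you split in the opposite order, and you work on a compact $K$, which can be taken to be $\overline\Omega$ when $\Omega$ is bounded, as the paper implicitly assumes.
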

\begin{proof}
 Let $d=\text{dist}(\Omega, \partial U)$ and set $\varepsilon = |x_1-x_2|.$ Since\footnote{Indeed, the statement is also true for any $s>0$, but one has to be more careful due to the higher order finite differences in the numerator. As it is not relevant for our analysis, we use the present formulation for the reader's convenience.} $s<\frac{1}{2}, $ we recall $(-\Delta)^{s+\mathrm Log}u$ without the principal value,
 \begin{align*}
  [(-\Delta)^{s+\mathrm Log}u](x)&= c_{N,s}\int_{\RR^N} \frac{u(x)-u(x+z)}{|z|^{N+2s}}\bigl(-2\ln|z|\bigr)\,dy+b_{N,s}(-\Delta)^s u(x)\\
  &=I^s_{\mathrm Log}u (x)+b_{N, s}(-\Delta)^s u(x)    
 \end{align*}
 Note that the second part of the above expression is $(-\Delta)^s$ and it is known that $(-\Delta)^su \in C^{\alpha-2s}(\Omega).$ Hence, we shall only consider the first part, which involves a logarithmic contribution in the kernel. For any $x_1, x_2 \in \Omega,$ we estimate
\begin{align*}
    |I^s_{\mathrm Log} u(x_1)-I^s_{\mathrm Log} u(x_2)|&\leq 2c_{N, s}\Bigg| \int_{\RR^N}\frac{\left[(u(x_1)-u(x_1+z))-(u(x_2)-u(x_2+z))\right]\ln |z|}{|z|^{N+2s}}\, dz\Bigg|\\
    &\leq 2c_{N, s}\Bigg|\int_{B_{\varepsilon}(0)}... \, dz\Bigg|+\Bigg|\int_{\RR^N\setminus B_{\varepsilon}(0)}...\, dz\Bigg|=2c_{N, s}(\mathrm I+\mathrm {II}).
\end{align*}
\textbf{Estimate of $\mathrm I:$ } Using $u \in C^{\alpha}(U),$ we get
\begin{align*}
    \mathrm I=&\int_{B_{\varepsilon}(0)}\frac{|[(u(x_1)-u(x_1+z))-(u(x_2)-u(x_2+z))]\ln |z||}{|z|^{N+2s}}\, dz\leq [u]_{0, \alpha}\int_{B_{\varepsilon}(0)}\frac{|z|^{\alpha}|\ln |z||}{|z|^{N+2s}}\, dz\\
    &\leq c(\kappa)[u]_{0, \alpha}\int_0^{\varepsilon}\rho^{N-1}\rho^{\alpha-\kappa-N-2s}\, d\rho= \frac{c(\kappa)[u]_{0, \alpha}}{\alpha-\kappa-2s} \varepsilon^{\alpha-\kappa -2s}=\frac{c(\kappa, [u]_{0, \alpha})}{\alpha-\kappa-2s}|x_1-x_2|^{\alpha-\kappa-2s}.
\end{align*}
\textbf{Estimate of $\mathrm {II}:$ } Here we can split the integral as
\begin{align*}
    \mathrm{II}&=\Bigg|\int_{\RR^N\setminus B_{\varepsilon}(0)}\frac{[(u(x_1)-u(x_1+z))-(u(x_2)-u(x_2+z))]\ln |z|}{|z|^{N+2s}}\, dz\Bigg|\\
    &\leq |(u(x_1)-u(x_2)|\int_{\RR^N\setminus B_{\varepsilon}(0)}\frac{|\ln |z||}{|z|^{N+2s}}\, dz+\Bigg|\int_{\RR^N\setminus B_{\varepsilon}(0)}\frac{(u(x_1+z)-u(x_2+z))\ln |z|}{|z|^{N+2s}}\, dz\Bigg|.
\end{align*} 
The first integral can be estimated again using the H\"{o}lder continuity of $u,$ and we obtain
\begin{align*}
 |(u(x_1)-u(x_2)|\int_{\RR^N\setminus B_{\varepsilon}(0)}\frac{|\ln |z||}{|z|^{N+2s}}\, dz\leq [u]_{0, \alpha}|x_1-x_2|^{\alpha} \int_{\varepsilon}^{\infty}|\ln \rho|\rho^{N-1}\rho^{-N-2s}\, d\rho.  
\end{align*}
We note that
\begin{align*}
    \int_{\varepsilon}^{\infty} |\ln \rho|\rho^{-1-2s}\, d\rho&=\int_{\varepsilon}^{1} (-\ln \rho)\rho^{-1-2s}\, d\rho+ \int_{1}^{\infty}  (\ln \rho)\rho^{-1-2s}\, d\rho\\
    &=\varepsilon^{-2s}\left[\frac{\ln (1/\varepsilon)}{2s}-\frac{1}{4s^2}\right]+\frac{1}{2s^2}\leq \varepsilon^{-2s}\left[\frac{\ln (1/\varepsilon)}{2s}+\frac{1}{4s^2}\right]\\
    &\leq \varepsilon^{-\kappa-2s}\left(\frac{1}{2s}+\frac{1}{4s^2}\right).
\end{align*}
Using this in the above estimate, we get
\begin{align*}
|(u(x_1)-u(x_2)|\int_{\RR^N\setminus B_{\varepsilon}(0)}\frac{|\ln |z||}{|z|^{N+2s}}\, dz \leq c(s, [u]_{0, \alpha})|x_1-x_2|^{\alpha-\kappa-2s}.    
\end{align*}

We use a change of variable to estimate the second integral of $\mathrm {II}.$ 
\begin{align*}
&\Bigg|\int_{\RR^N\setminus B_{\varepsilon}(0)}\frac{[u(x_1+z)-u(x_2+z)]\ln |z|}{|z|^{N+2s}}\, dz\bigg|\\
&\le \int_{\varepsilon< |z|\leq \frac{d}{2}}\frac{|u(x_1+z)-u(x_2+z)||\ln |z||}{|z|^{N+2s}}\, dz+\bigg|\int_{|z|>\frac{d}{2}}\frac{[u(x_1+z)-u(x_2+z)]\ln |z|}{|z|^{N+2s}}\, dz\bigg|.
\end{align*}
Note that since $|z|\leq \frac{d}{2},$ for any $x_1, x_2\in \Omega,$ we have that $x_i+z\in U, $ $i=1,2.$ Therefore, using the H\"older continuity of $u, $ we derive
\begin{align*}
 \int_{\varepsilon< |z|\leq \frac{d}{2}}\frac{|u(x_1+z)-u(x_2+z)||\ln |z||}{|z|^{N+2s}}\, dz\leq c(s, [u]_{0, \alpha})|x_1-x_2|^{\alpha-\kappa-2s}.   
\end{align*}
Next, using change of variable in the second integral, we get
\begin{align*}
&\bigg|\int_{|z|>\frac{d}{2}}\frac{[u(x_1+z)-u(x_2+z)]\ln |z|}{|z|^{N+2s}}\, dz\bigg|\\
&=\bigg| \int_{|x_1-y|>\frac{d}{2}}\frac{u(y)\ln |x_1-y|}{|x_1-y|^{N+2s}}\, dy-\int_{|x_2-y|>\frac{d}{2}}\frac{u(y)\ln |x_2-y|}{|x_2-y|^{N+2s}}\, dy\bigg|.
\end{align*}
Let us define the sets
\begin{align*}
    D_1:=\left\{y\;:\; |x_1-y|> \frac{d}{2}, |x_2-y|> \frac{d}{2}\right\},\\
    D_2:=\left\{y\;:\; |x_1-y|> \frac{d}{2}, |x_2-y|\leq \frac{d}{2}\right\},\\
    D_3:=\left\{y\;:\; |x_1-y|\leq \frac{d}{2}, |x_2-y|> \frac{d}{2}\right\},
\end{align*}
and write the above integral in the following form.
\begin{align*}
&\bigg| \int_{|x_1-y|>\frac{d}{2}}\frac{u(y)\ln |x_1-y|}{|x_1-y|^{N+2s}}\, dy-\int_{|x_2-y|>\frac{d}{2}}\frac{u(y)\ln |x_2-y|}{|x_2-y|^{N+2s}}\, dy\bigg|\\
&=\Bigg|\int_{D_1}u(y)(\ln|x_1-y||x_1-y|^{-N-2s}-\ln |x_2-y||x_2-y|^{-N-2s})\, dy\\
&+\int_{D_2}u(y)\ln |x_1-y||x_1-y|^{-N-2s}\, dy-\int_{D_3}u(y)\ln |x_2-y||x_2-y|^{-N-2s}\, dy\Bigg|\\
&\leq||u||_{L^{\infty}(\RR^N)}\int_{D_1}\Big|\ln |x_1-y||x_1-y|^{-N-2s}-\ln |x_2-y||x_2-y|^{-N-2s}\Big|\, dy\\
&+ ||u||_{L^{\infty}(\RR^N)}\int_{D_2}|\ln |x_1-y|||x_1-y|^{-N-2s}\, dy+||u||_{L^{\infty}(\RR^N)}\int_{D_3}|\ln |x_2-y|||x_2-y|^{-N-2s}\, dy.
\end{align*}
We consider the function $h: (d/2, \infty)\to \mathbb{R}$ 
\[h(\rho)=(\ln \rho)\rho^{-m}\quad \text{for}\quad m\geq 1.\] Using mean value theorem, for any $\rho_1, \rho_2\in (d/2, \infty),$ we have
\begin{align*}
    \left|h(\rho_1)-h(\rho_2)\right|\leq \max_{\xi \in [\rho_1, \rho_2]}\left|\xi^{-m-1}(1-m\ln \xi)\right||\rho_1-\rho_2|.
\end{align*}
Since, we have 
\begin{align*}
    \int_{d/2}^{\infty}\rho^{-2(s+1)}|\ln \rho|\, d\rho< \infty,
\end{align*} in particular, taking $m=N+2s,$
we estimate the first term as
\begin{align*}
 &\int_{D_1}\Big|\ln |x_1-y||x_1-y|^{-N-2s}-\ln |x_2-y||x_2-y|^{-N-2s}\Big|\, dy\\
 &\leq |x_1-x_2|\Bigg[\int_{D_1}\left||x_1-y|^{-(N+2s)-1}\left(1-(N+2s)\ln |x_1-y|\right)\right|\\
 &+\left||x_2-y|^{-(N+2s)-1}\left(1-(N+2s)\ln |x_2-y|\right)\right|\,dy\Bigg]\\
 &\leq c(d/2, N, s)|x_1-x_2|.
\end{align*}
To estimate the second and third integral, we first observe that for $y \in D_2,$
\begin{align*}
\frac{d}{2}< |x_1-y|\leq |x_1-x_2|+ \frac{d}{2}.
\end{align*}
This implies $D_2 \subset \left\{y : \frac{d}{2}< |x_1-y|\leq |x_1-x_2|+ \frac{d}{2}\right\}:=\widetilde D_2.$ Hence, we have
\begin{align*}
  \int_{D_2}|\ln |x_1-y|||x_1-y|^{-N-2s}\, dy &\leq \int_{\widetilde D_2} |\ln |x_1-y|||x_1-y|^{-N-2s}\, dy\\
  &=\int_{d/2}^{|x_1-x_2|+d/2}|\ln \rho|\rho^{-1-2s}\,d\rho\\
  &\leq \max\left\{\ln (2/d), \ln (|x_1-x_2|+d/2)\right\}(\frac{d}{2})^{-(1+2s)}|x_1-x_2|\\
  &\leq \max\left\{\ln (2/d), \ln (\text{diam}\, \Omega+d/2)\right\}(\frac{d}{2})^{-(1+2s)}|x_1-x_2|^{\alpha-\kappa-2s}.
\end{align*}
Similarly, we note that 
\[D_3 \subset \left\{y : \frac{d}{2}< |x_2-y|\leq |x_1-x_2|+ \frac{d}{2} \right\}:=\widetilde D_3\] and proceeding as before, we get the same bound.
\begin{align*}
\int_{D_3}|\ln |x_2-y|||x_2-y|^{-N-2s}\, dy &\leq \int_{\widetilde D_3} |\ln |x_2-y|||x_2-y|^{-N-2s}\, dy\\
&\leq  \max\left\{\ln (2/d), \ln (\text{diam}\, \Omega+d/2)\right\}(\frac{d}{2})^{-(1+2s)}|x_1-x_2|^{\alpha-\kappa-2s}.
\end{align*}
Combining all the estimates above, for any $x_1, x_2 \in \Omega,$ we obtain 
\begin{align*}
|(-\Delta)^{s+\mathrm Log} u(x_1)-(-\Delta)^{s+\mathrm Log}u(x_2)| \leq C(\kappa, d, s, N,  ||u||_{L^{\infty}(\RR^N)}, [u]_{0, \alpha})|x_1-x_2|^{\alpha-\kappa-2s}.
\end{align*} This completes the proof. 
\end{proof}

\noindent\textbf{Acknowledgment.}
Abhrojyoti Sen is supported by a postdoctoral research fellowship from the Alexander von Humboldt Foundation, Germany. The author also thanks Goethe University Frankfurt for its support and research environment.

\bigskip
\noindent\textbf{Conflict of interest.} On behalf of all authors, the corresponding author states that there is no conflict of interest.

\bigskip
\noindent\textbf{Data availability.} This manuscript does not have associated data.

\end{document}